\newtheorem{Theorem}{Theorem}[section]
\newtheorem{Assumption}{Assumption}[section]
\newtheorem{Lemma}[Theorem]{Lemma}
\newtheorem{Corollary}[Theorem]{Corollary}
\newtheorem{Remark}[Theorem]{Remark}
\newtheorem{Example}[Theorem]{Example}
\newtheorem{Definition}[Theorem]{Definition}
\newcommand{\di}{d}
\newcommand{\R}{\mathbb{R}}
\newcommand{\E}[1]{\mathbb{E}\left[#1\right]}
\newcommand{\br}[1]{\left(#1\right)}
\newcommand{\abs}[1]{\left\vert#1\right\vert}
\newcommand{\norm}[1]{\left\|#1\right\|}
\newcommand{\bbr}[1]{\left\{#1\right\}}
\begin{document}

\title{An efficient gradient projection method for stochastic optimal control problem with expected integral state constraint\thanks{The work of the authors was supported by the National Natural Science Foundation of China (12171042).}
}

\titlerunning{Gradient projection method for SOCP with integral state constraint}

\author{Qiming Wang \and Wenbin Liu}


\institute{Qiming Wang \at
School of Mathematical Sciences, Beijing Normal University, Zhuhai 519087, People's Republic of China. \\
\email{202231130040@mail.bnu.edu.cn}           
\and
Wenbin Liu \at
Research Center for Mathematics, Beijing Normal University, Zhuhai 519087, People's Republic of China.
\and
Faculty of Business and Management, Beijing Normal University-Hong Kong Baptist University United International College, Zhuhai 519087, People's Republic of China.\\
\email{wbliu@uic.edu.cn}
}

\date{Received: date / Accepted: date}

\maketitle

\begin{abstract}
In this work, we present an efficient gradient projection method for solving a class of stochastic optimal control problem with expected integral state constraint. 
The first order optimality condition system consisting of forward-backward stochastic differential equations and a variational equation is first derived.
Then, an efficient gradient projection method with linear drift coefficient is proposed where the state constraint is guaranteed by constructing specific multiplier.
Further, the Euler method is used to discretize the forward-backward stochastic differential equations and the associated conditional expectations are approximated by the least square Monte Carlo method, yielding the fully discrete iterative scheme. Error estimates of control and multiplier are presented, showing that the method admits first order convergence. Finally we present numerical examples to support the theoretical findings.

\keywords{Stochastic optimal control\and expected integral state constraint\and backward stochastic differential equation\and gradient projection method\and least square Monte Carlo}
 \subclass{60H35\and 65K10\and 65C20\and 93E20}
\end{abstract}

\section{Introduction}
In this work, we consider the numerical method for stochastic optimal control problems (SOCPs) with expected integral state constraint. 
To be specific, let $T \in (0, \infty)$ be a fixed time horizon and $(\Omega, \mathcal{F}, \{\mathcal{F}_t\}_{0 \leq t \leq T}, \mbox{P})$ be a complete probability space with the natural filtration $\{\mathcal{F}_t\}_{0\leq t \leq T}$ generated by a $d$-dimensional Brownian motion $W_t$. 
The considered SOCP is given by
\begin{equation}\label{object}
    \min\limits_{(y_t^u,u)\in K\times L^2([0,T];\mathbb{R}^{m})} J(u) := \E{\int_{0}^{T} \left(h(y_t^u) + j\br{u}\right) \di t + k(y_T^u)}.
\end{equation}
Here $u:[0, T] \to \R^m$ is the control variable and $J:L^2([0,T]; \R^m)\to\R$ is the continuously Fr\'{e}chet differentiable objective functional. $h,k: \R^n \to \R$, $j: \R^m \to \R$ are smooth functions and $y_t^u: [0, T] \times \Omega \to \R^n$ is the state process solved by the following stochastic differential equation (SDE):
\begin{equation}\label{state}
\left\{\begin{aligned}
  d y_t^u& = b(y_t^u,u)dt+\sigma(y_t^u,u) dW_t,\quad t\in (0,T],\\
  y_0^u &= y^0\in \mathbb{R}^{n},
\end{aligned}\right.
\end{equation}
where $b: \mathbb{R}^{n} \times \mathbb{R}^{m} \to \mathbb{R}^{n}$ and $\sigma: \mathbb{R}^{n}\times\mathbb{R}^{m}\to\mathbb{R}^{n\times d}$ are the drift and diffusion coefficients, respectively. The state constraint set is given by  
\begin{eqnarray}
\begin{aligned}\label{state_con}
    K := \left\{y_t \in L_{\mathcal{F}}^2\left([0,T] \times \Omega;\mathbb{R}^n\right) \Big\vert\; \int_0^T \E{y_t} \di t \leq \delta\in\mathbb{R}^{n}\right\}.
\end{aligned}
\end{eqnarray}
Here $L^2([0,T]; \R^m)$ denotes the space consisting of all functions $u: [0, T] \to \R^m$ that satisfy
$\norm{u}^2 := \int_0^T \abs{u(t)}^2 \di t < +\infty$ and $L_{\mathcal{F}}^2([0,T] \times \Omega; \R^n)$ denotes the space consisting of all $\{\mathcal{F}_t\}_{0\leq t\leq T}$ adapted processes $y_t: [0, T] \times \Omega \to \R^n$ that satisfy
$\norm{y_t}_{\Omega}^2 := \int_0^T\E{ \abs{y_t}^2 }\di t < +\infty$.

SOCPs have extensive applications in numerous areas, including financial mathematics \cite{jinrong1,jinrong2,jinrong3}, engineering systems \cite{gongcehng1,gongcehng2}, biological and medical applications \cite{shengwu}. Generally, explicit solutions for most SOCPs are not feasible, especially for those with constraints. Therefore, effective numerical methods play a crucial role in practical applications. Two general frameworks for solving SOCPs are dynamic programming principle (DPP) \cite{dpp1,dpp2} and stochastic maximum principle (SMP) \cite{max1,max2,max3,max4}. 
In this work, SMP is employed to solve problem (\ref{object})-(\ref{state}) due to the fact that state constraint may cause non-continuity of the value function, leading to the non-applicability of the DPP. The principle of SMP is to convert the directional derivative of the objective functional into a more efficiently computed form called variational equation by introducing an adjoint equation, which is a backward stochastic differential equation (BSDE). Then the variational equation coupled with
the forward-backward stochastic differential equations (FBSDEs), i.e., state and adjoint equations, forms an optimality condition system that can be used to solve the optimal control problem. 

The key to solving the optimality condition system is to solve BSDEs effectively. 
One of the most popular ways is to solve BSDEs backwardly through time, which leads to the development of numerous spatial-temporal discretization schemes for BSDEs. 
From a perspective of temporal discretization, we can use the Euler-type methods \cite{euler1}, generalized $\theta$-schemes \cite{theta1,theta2,theta3}, Runge-Kutta schemes \cite{runge1}, multistep schemes \cite{mu1,mu2,mu3} and so on. From a perspective of spatial discretization, approximation techniques can be roughly classified into grid-type method and regression approach. For the former, we can refer to \cite{runge1,mu1,grid1,grid2,grid3,max3} and their references. A common class of methods used in regression approach is the least square Monte Carlo (LSMC) method, which is introduced in \cite{intro} in the context of American options and is applied to the backward scheme in \cite{goblt1}. 
Then a forward scheme for simulating BSDEs is introduced in \cite{bender1}, where the conditional expectations are approximated by LSMC and it avoids high order nestings of conditional expectations backwards in time. \cite{goblt2,goblt3} investigate the numerical solution of BSDEs with data dependent on a jump-diffusion process by LSMC. It's worth noting that the explicit bounds for the time step, the number of simulated paths and the number of functions in the error estimate are derived, which can enable us to optimize and adjust parameters to achieve the desired accuracy. For more studies on BSDEs and LSMC method, we refer to \cite{other1,other2,other3,other4} and their reference.

The theoretical analysis of SOCPs with state constraint has been studied since the foundation of stochastic optimal control theory. The special case where the diffusion term of the state equation does not contain control variable was first studied by \cite{state1} and \cite{state2}. The stochastic maximum principle of SOCP with terminal state constraints is presented in \cite{state3}.
In \cite{state4}, a stochastic optimal control problem is studied where the controlled system is described by FBSDEs, while the forward state is constrained in a convex set at the terminal time. The maximum principle is deduced and the fully coupled case is studied in \cite{state5}. 
Theoretical analysis of state-constrained SOCPs has been developing \cite{state6,state7}, however, the work on numerical analysis of the SOCPs with state constraint remains unreported. 
We make the first attempt to design numerical algorithm for solving SOCP problem with state constraint.

In this work, we first derive the optimality condition system by constructing the Lagrange functional. In the case where drift coefficient is a linear function, the gradient projection method is presented. Compared with SOCP with control constraint, state constraint is an implicit constraint with respect to control variable, which makes the gradient projection method more complex. Then, for the FBSDEs in the optimality condition system, we discretize them by Euler method in time and the conditional expectations are approximated by LSMC method to obtain the fully discrete iterative scheme. Further, the first order convergence of the control and multiplier is deduced under certain parameter settings. Finally, we verify the theoretical analysis by several numerical experiments. 

The paper is organized as follows. In the next section, some notation and assumptions are given and optimality condition system is deduced. In section 3, the gradient projection method is proposed and the temporal discretization for the control is performed. In section 4, the fully discrete iterative scheme and algorithm are given based on the fully discrete scheme of FBSDEs. The error estimates for the control and multiplier are derived. Finally, several numerical examples are performed to verify the theoretical findings in section 5.

\section{Optimality condition system}\label{section_opt}
In this section, we derive the first-order optimality condition. 
For the sake of notational simplicity, our discussion will be confined to the one-dimensional case, i.e., $m = n = d = 1$, however, the entire framework can be trivially extended to the multi-dimensional case. 

We begin with the following notation:
\begin{enumerate}[$\bullet$]
    \item $C_{b}^{j,j,j,l}$: the set of continuously differentiable functions $(y,p,q,t) \in \mathbb{R}\times\mathbb{R}\times\mathbb{R}\times[0,T] \mapsto g(y, p, q, t) \in \mathbb{R}$ with bounded partial derivative functions $\partial_{y}^{j_1}\partial_{p}^{j_2}\partial_{q}^{j_3}\partial_{t}^{l_1} g$ for $0 \leq j_1 + j_2 + j_3 \leq j$ and $0\leq l_1\leq l$. Analogous definitions apply for $C_{b}^{j,l}$ and $C_{b}^{j}$.
    \item $C_b^{j+\alpha}$: the set consisting of all $g \in C_{b}^{j}$ with $g^{(j)}$ being H\"{o}lder continuous with index $\alpha \in (0, 1)$. 
    \item $\mathrm{int}(A)$: the interior of the set $A \subset \R$.  
    \item $x + A := \bbr{x + a \vert\; a \in A}$ for $x \in \R$ and $A \subset \R$. 
    \item $(u, v)$: the inner product of $u, v \in L^2([0,T]; \mathbb{R})$, i.e., $(u,v) := \int_{0}^{T} u(t)v(t) \di t$.
    \item $[y, w]$: the inner product of $y, w \in L_{\mathcal{F}}^2([0,T]\times\Omega;\mathbb{R})$, i.e., $[y, w] := \int_{0}^{T} \mathbb{E}\left[y_t w_t\right] dt$.
\end{enumerate} 

For the functions in SOCP (\ref{object})-(\ref{state}), the following assumptions are given:
\begin{Assumption}\label{jiashe1}
The functions $b, \sigma, h, j$ and $k$ in the SOCP \eqref{object}-\eqref{state} satisfy
\begin{enumerate} [$\bullet$]
\item $b=b(y,u)$ and $\sigma=\sigma(y,u)$ are continuously differentiable with
respect to $y$ and $u$ and there exist positive constants $C_{b}$ and $C_{\sigma}$ such that
$|b_y^{\prime}|+|b_u^{\prime}|\leq C_{b}$ and $|\sigma_y^{\prime}|+|\sigma_u^{\prime}|\leq C_{\sigma}.$
\item $h, j$ and $k$ are continuously differentiable, and their derivatives
have at most a linear growth with respect to the underlying variables.
\end{enumerate} 
\end{Assumption}

Under Assumption \ref{jiashe1}, the state equation \eqref{state} admits a unique solution $y_t^u \in L_{\mathcal{F}}^2([0,T]\times\Omega;\mathbb{R})$ for any $\br{y^0,u} \in \mathbb{R}\times L^2([0,T]; \mathbb{R})$ (\cite{exm4}). For notational convenience, SOCP~\eqref{object} can be written into
\begin{eqnarray}\label{reduced}
    \mbox{Find}\ u^*\in U(\delta)\;\; \mbox{such that}\;\; J(u^*)=\min\limits_{u\in U(\delta)}J(u),
\end{eqnarray}
where $U(\delta)$ is the feasible control set given by
\begin{eqnarray}\label{con_U}
    U(\delta) := \left\{u\in L^2([0,T];\mathbb{R}) \;\vert\;G(u) \leq 0\right\}
\end{eqnarray}
with
\begin{eqnarray}\label{Gu}
    G(u) := \int_0^T\mathbb{E}\left[y_t^u\right] \di t - \delta.
\end{eqnarray}

To given the necessary condition of the optimal control, according to Chapter 1.7.3 of \cite{hinze}, the following regularity condition of Robinson is introduced:
\begin{Definition}(\cite{hinze})\label{robinson}
One says that the Robinson's regularity condition holds at $u(t)$ if
\begin{eqnarray}\label{Robin}
\begin{aligned}
0\in \textnormal{int}\left(G(u)+G^{\prime}(u)\left(L^2([0,T];\mathbb{R})\right)-K_{G}\right),
\end{aligned}
\end{eqnarray}
where $K_G=(-\infty,0]$, $G^{\prime}(u)\left(L^2([0,T];\mathbb{R})\right):=\left\{G^{\prime}(u)(v)|v\in L^2([0,T];\mathbb{R})\right\}$ and $G^{\prime}(u)(v)$ is the variation of $G(u)$ with respect to $u$ along the direction $v$.
\end{Definition}

Note that we can rewrite $G^{\prime}(u)(v)$ by introducing a BSDE. From the definition in (\ref{Gu}), for any $v\in L^2([0,T];\mathbb{R})$ we have
\begin{eqnarray}
\begin{aligned}\label{direction_G}
G^{\prime}(u)(v)&=\lim_{\kappa\rightarrow 0}\frac{G(u+\kappa v)-G(u)}{\kappa}
=\int_{0}^{T}\mathbb{E}[Dy_t^{u}(v)]dt,
\end{aligned}
\end{eqnarray}
where $t \mapsto Dy_t^{u}(v)$ is the variational process given by the following SDE:
\begin{eqnarray*}\left\{\begin{aligned}\label{dy_t}
dDy_t^{u}(v)&=\left(b_{y}^{\prime}(y_t^u,u)Dy_t^{u}(v)+b_{u}^{\prime}(y_t^u,u)v\right)dt
+\left(\sigma_{y}^{\prime}(y_t^{u},u)Dy_t^{u}(v)+\sigma_{u}^{\prime}(y_t^{u},u)v\right)dW_t,\\ Dy_0^{u}(v)&=0.
\end{aligned}\right.
\end{eqnarray*}
The existence of derivative in (\ref{direction_G}) has been discussed in \cite{state3,exist2,exist3} and applied in SOCP with control constraint \cite{max3}, data driven feedback control problem \cite{max2} and so on. To get rid of $Dy_t^{u}(v)$ in (\ref{direction_G}), we introduce a pair of adjoint process $(p_t^u, q_t^u)$, which is the adapted solution to the following BSDE:
\begin{eqnarray}\label{dp_t}
\left\{\begin{aligned}
-dp_{t}^{u}&=\left(1+p_{t}^{u}b_{y}^{\prime}(y_{t}^{u},u)+q_{t}^{u}\sigma_{y}^{\prime}(y_{t}^{u},u)\right)dt-q_{t}^{u}dW_t,\\
p_{T}^{u}&=0.
\end{aligned}\right.
\end{eqnarray}
By the It\^o formula, we obtain
\begin{eqnarray}\label{denote_dG}
\begin{aligned}
d\left(p_{t}^{u}Dy_t^{u}(v)\right)
&=Dy_t^{u}(v)dp_{t}^{u}+p_{t}^{u}dDy_t^{u}(v)+
q_{t}^{u}\left(\sigma_{y}^{\prime}(y_t^{u},u)Dy_t^{u}(v)+\sigma_{u}^{\prime}(y_t^{u},u)v\right)dt\\
&=-Dy_t^{u}(v)dt+\left(p_{t}^{u}b_{u}^{\prime}(y_t^u,u)v+q_{t}^{u}\sigma_{u}^{\prime}(y_t^{u},u)v \right)dt\\
&\quad+\left(q_{t}^{u}Dy_t^{u}(v)+p_{t}^{u}\left(\sigma_{y}^{\prime}(y_t^{u},u)Dy_t^{u}(v)+\sigma_{u}^{\prime}(y_t^{u},u)v\right)\right)dW_t,
\end{aligned}
\end{eqnarray}
where the second equality follows from (\ref{dp_t}). Integrating both sides of (\ref{denote_dG}) and taking the expectation yields
\begin{eqnarray*}\begin{aligned}\label{denote_G}
G^{\prime}(u)(v)
=\int_{0}^{T}\left(\mathbb{E}\left[p_{t}^{u}b_{u}^{\prime}(y_t^u,u)+q_{t}^{u}\sigma_{u}^{\prime}(y_t^{u},u)\right]
\right)vdt.
\end{aligned}
\end{eqnarray*}

Now we are ready to present the necessary conditions of the optimal control for the SOCP \eqref{reduced}-\eqref{con_U}.
\begin{Theorem}\label{mu}
Let Assumption \ref{jiashe1} hold, and $u^*$ satisfied Robinson's regularity condition (\ref{Robin}) be the optimal control given in \eqref{reduced}, then there exists a real number $\mu^*\geq0$ and a pair of adjoint process $(p_t^*,q_t^*)$ such that
\begin{eqnarray}\label{con_first}\left\{\begin{aligned}
&dy_t^*=b(y_t^*,u^*)dt+\sigma(y_t^*,u^*)dW_t,\ y^*_0=y^0,\\
&-dp_t^*=\left(h^{\prime}(y_t^*)+p_t^*b_{y}^{\prime}(y_t^*,u^*)+q_t^*\sigma_{y}^{\prime}(y_t^*,u^*)+\mu^*\right)dt-q_t^*dW_t,\  
p_T^*=g(y_T^*)=k^{\prime}(y_T^*),\\
&\left[\mu^{*},w_t-y_t^*\right]\leq0,\ \forall w_t\in K,\\
&\mathbb{E}\left[p_t^*b_{u}^{\prime}(y_t^*,u^*)+q_t^* \sigma_{u}^{\prime}(y_t^*,u^*)\right]+j^{\prime}(u^*)=0.
\end{aligned}\right.
\end{eqnarray}
\end{Theorem}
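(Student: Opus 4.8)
The plan is to treat \eqref{reduced}--\eqref{con_U} as a smooth minimization problem over the Hilbert space $L^2([0,T];\R)$ subject to the single inequality constraint $G(u)\le 0$, apply the abstract Lagrange (KKT) multiplier theorem that is available under Robinson's regularity condition, and then convert the resulting abstract first-order condition into the FBSDE system \eqref{con_first} by introducing a second adjoint equation and exploiting linearity. Under Assumption \ref{jiashe1} the control-to-state map $u\mapsto y^u_t$ is continuously Fr\'echet differentiable from $L^2([0,T];\R)$ to $L^2_{\mathcal F}([0,T]\times\Omega;\R)$, with derivative the variational process $Dy^u_t(\cdot)$; hence $G$ is continuously Fr\'echet differentiable with $G'(u)(v)$ as in \eqref{direction_G}, and $J$ is continuously Fr\'echet differentiable by the standing assumption on the functional in \eqref{object}. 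Since $u^*$ satisfies \eqref{Robin}, Chapter 1.7.3 of \cite{hinze} provides a multiplier $\mu^*\ge 0$ such that the Lagrangian $u\mapsto J(u)+\mu^* G(u)$ is stationary at $u^*$, i.e. $J'(u^*)(v)+\mu^* G'(u^*)(v)=0$ for all $v\in L^2([0,T];\R)$, together with complementary slackness $\mu^* G(u^*)=0$ and feasibility $G(u^*)\le 0$.

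Next I would produce an adjoint representation of $J'(u^*)$ mirroring the one already derived for $G'$. Let $(\tilde p_t,\tilde q_t)$ be the adapted solution of
\[
-d\tilde p_t=\br{h'(y^*_t)+\tilde p_t\,b'_y(y^*_t,u^*)+\tilde q_t\,\sigma'_y(y^*_t,u^*)}dt-\tilde q_t\,dW_t,\qquad \tilde p_T=k'(y^*_T),
\]
which is well posed because $h'$ and $k'$ grow at most linearly. Applying It\^o's formula to $\tilde p_t\,Dy^*_t(v)$ exactly as in \eqref{denote_dG} gives $J'(u^*)(v)=\int_0^T\E{\tilde p_t\,b'_u(y^*_t,u^*)+\tilde q_t\,\sigma'_u(y^*_t,u^*)+j'(u^*)}\,v\,dt$. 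Setting $p^*_t:=\tilde p_t+\mu^* p^{u^*}_t$ and $q^*_t:=\tilde q_t+\mu^* q^{u^*}_t$, where $(p^{u^*}_t,q^{u^*}_t)$ solves \eqref{dp_t} at $u=u^*$, linearity of the two BSDEs shows that $(p^*_t,q^*_t)$ solves the BSDE with running coefficient $h'(y^*_t)+\mu^*$ and terminal value $k'(y^*_T)$, which is precisely the adjoint equation in \eqref{con_first}. Plugging the representations of $J'$ and $G'$ into the stationarity identity and using the arbitrariness of $v$ yields $\E{p^*_t\,b'_u(y^*_t,u^*)+q^*_t\,\sigma'_u(y^*_t,u^*)}+j'(u^*)=0$ for a.e.\ $t$, the last line of \eqref{con_first}; the first line is the state equation, which holds by definition of $y^*_t$.

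It remains to recast the scalar conditions $\mu^*\ge 0$, $G(u^*)\le 0$ and $\mu^* G(u^*)=0$ as the variational inequality $[\mu^*,w_t-y^*_t]\le 0$ for all $w_t\in K$. Regarding $\mu^*$ as a constant process, $[\mu^*,w_t-y^*_t]=\mu^*\br{\int_0^T\E{w_t}\,dt-\int_0^T\E{y^*_t}\,dt}=\mu^*\br{\int_0^T\E{w_t}\,dt-\delta}-\mu^* G(u^*)$, and the right-hand side is $\le 0$ since $\mu^*\ge 0$, $\int_0^T\E{w_t}\,dt\le\delta$ for $w_t\in K$, and $\mu^* G(u^*)=0$. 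This closes the argument. The step I expect to be the main obstacle is the rigorous justification, invoked in the first paragraph, that the control-to-state map (hence $G$) is continuously Fr\'echet differentiable on $L^2([0,T];\R)$: this requires well-posedness and stability estimates for the linear variational SDE defining $Dy^u_t(v)$ together with continuity of $u\mapsto\br{y^u_t,Dy^u_t(\cdot)}$, the very estimates that also guarantee well-posedness of the two adjoint BSDEs. Once this regularity is secured, the application of the abstract multiplier theorem and the It\^o-formula manipulations are routine.
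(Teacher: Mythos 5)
Your proposal is correct and follows essentially the same route as the paper: both invoke the abstract KKT theorem of \cite{hinze} under Robinson's condition to obtain $\mu^*\ge 0$ with complementary slackness and Lagrangian stationarity, both use the It\^o-formula duality with an adjoint BSDE to represent the derivative, and both recast $\mu^*G(u^*)=0$ together with feasibility of $w_t\in K$ as the variational inequality $[\mu^*,w_t-y_t^*]\le 0$. The only (cosmetic) difference is that you build $(p_t^*,q_t^*)$ by superposing the adjoint for $J'$ and $\mu^*$ times the adjoint \eqref{dp_t} for $G'$ via linearity, whereas the paper writes down a single BSDE whose driver already contains $h'(y)+\mu^*$; the two constructions coincide, consistent with the relation $p_t=\hat p_t+\mu\psi(t)$ the paper records later.
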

\begin{proof}
By Theorem 1.56 in \cite{hinze} and the condition~\eqref{Robin}, there exists a Lagrange multiplier $\mu^*\geq0$ such that
\begin{eqnarray}\begin{aligned}\label{Lagrang_condi}
\mu^*G(u^{*})=0,\quad \mathcal{L}^{\prime}_{u}(u^*,\mu^*)(v-u^*)=0,\ \forall v\in L^2([0,T];\mathbb{R}),
\end{aligned}
\end{eqnarray}
where 
\begin{eqnarray}\begin{aligned}\label{Lagrangian}
\mathcal{L}(u^*,\mu^*):=J(u^*)+\mu^* G(u^*)
\end{aligned}
\end{eqnarray}
is the Lagrangian functional.
Inserting \eqref{Gu} into the first equality in \eqref{Lagrang_condi}, we obtain
\begin{equation}\label{eq_0eqMuGu}
0= \mu^*\br{[y^*, 1] - \delta} = \mu^* [y^* - w_t, 1] + \mu^* \br{[w_t, 1] - \delta}, \quad \forall w_t \in K
\end{equation}
with $K$ given in \eqref{state_con}.
Moreover, the definition of $K$ and $\mu^{*}\geq0$ imply that
\begin{equation*}
\mu^* \br{[w_t, 1] - \delta} \leq 0,
\end{equation*}
 and thus \eqref{eq_0eqMuGu} leads to
\begin{equation}\label{mu_budeng}
\mu^* [y^* - w_t, 1] \geq 0, \;\;\forall w_t \in K.
\end{equation}

From the definitions of $J(\cdot)$ and $G(\cdot)$, for any $v\in L^{2}([0,T];\mathbb{R})$ we have
\begin{eqnarray}\label{lag_der}\begin{aligned}
&\mathcal{L}_u^{\prime}(u^*,\mu^*)(v-u^*)\\
&=\lim_{\kappa\rightarrow 0}\frac{J\left(u^*+\kappa(v-u^*)\right)-J(u^*)}{\kappa}+\mu^*\lim_{\kappa\rightarrow 0}\frac{G\left(u^*+\kappa(v-u^*)\right)-G(u^*)}{\kappa}\\
&=\int_{0}^{T} \mathbb{E}\left[h^{\prime}(y_t^*)Dy_t^*(v-u^*)\right]dt+\mu^*\int_{0}^{T}\mathbb{E}\left[Dy_t^*(v-u^*)\right]dt \\
&\quad+\int_{0}^{T}j^{\prime}(u^*)(v-u^*)dt+\mathbb{E}\left[k^{\prime}(y_T^*)Dy_T^*(v-u^*) \right],
\end{aligned}
\end{eqnarray}
where $Dy_t^*(v-u^*)$ is the solution to the following SDE:
\begin{eqnarray*}
\left\{\begin{aligned}
dDy_t^{*}(v-u^{*})&=\left(b_{y}^{\prime}(y_t^*,u^*)Dy_t^{*}(v-u^*)+b_{u}^{\prime}(y_t^*,u^*)(v-u^*)\right)dt\\
&\quad+\left(\sigma_{y}^{\prime}(y_t^{*},u^*)Dy_t^{*}(v-u^*)+\sigma_{u}^{\prime}(y_t^{*},u^*)(v-u^{*})\right)dW_t,\\ Dy_0^{*}(v-u^{*})&=0.
\end{aligned}\right.
\end{eqnarray*}
In order to eliminate $Dy_t^{*}(v-u^{*})$ in (\ref{lag_der}), we introduce the following BSDE:
\begin{eqnarray}\label{adjoint}
\left\{\begin{aligned}
-dp_t^*&=f(y_t^*,p_t^*,q_t^*,u^*,\mu^*)dt-q_t^*dW_t,\\
p_T^*&=g(y_T^*)=k^{\prime}(y_T^*),
\end{aligned}\right.
\end{eqnarray}
where $f(y,p,q,u,\mu):=h^{\prime}(y)+pb_{y}^{\prime}(y,u)+q\sigma_{y}^{\prime}(y,u)+\mu$.
From the It\^{o} formula and (\ref{adjoint}), it follows that
\begin{eqnarray*}
\begin{aligned}
&d\left(Dy_t^{*}(v-u^{*})p_t^*\right)\\
&=Dy_t^{*}(v-u^{*})dp_t^*+p_t^*dDy_t^{*}(v-u^{*})+q_t^{*}\left(\sigma_{y}^{\prime}(y_t^{*},u^*)Dy_t^{*}(v-u^*)+\sigma_{u}^{\prime}(y_t^{*},u^*)(v-u^{*})\right)dt\\
&=(-h^{\prime}(y_t^*)-\mu^{*})Dy_t^{*}(v-u^{*})dt+\left(p_{t}^{*}b_{u}^{\prime}(y_t^{*},u^*)(v-u^*)
+q_{t}^{*}\sigma_{u}^{\prime}(y_t^{*},u^*)(v-u^*)\right)dt\\
&\quad+\left(q_t^{*}Dy_t^{*}(v-u^{*})+p_{t}^{*}\sigma_{y}^{\prime}(y_t^{*},u^*)Dy_t^{*}(v-u^*)+p_{t}^{*}\sigma_{u}^{\prime}(y_t^{*},u^*)(v-u^{*})\right)dW_t.
\end{aligned}
\end{eqnarray*}
Integrating both sides of the above equation, using the initial condition of $Dy_t^{*}(v-u^{*})$ and the terminal condition of $p_t^{*}$ and taking the expectation gives
\begin{eqnarray*}\begin{aligned}
&\mathcal{L}_u^{\prime}(u^*,\mu^*)(v-u^*)\\
&=\int_{0}^{T}\left(\mathbb{E}\left[p_t^*b_{u}^{\prime}(y_t^*,u^*)+q_t^* \sigma_{u}^{\prime}(y_t^*,u^*)\right]+j^{\prime}(u^*)\right)(v-u^*)dt=0,\ \forall v\in L^2([0,T];\mathbb{R}),
\end{aligned}
\end{eqnarray*}
which implies
\begin{eqnarray}\begin{aligned}\label{grad_lage}
\mathcal{L}^{\prime}_{u}(u^*,\mu^*)=\mathbb{E}\left[p_t^*b_{u}^{\prime}(y_t^*,u^*)+q_t^* \sigma_{u}^{\prime}(y_t^*,u^*)\right]+j^{\prime}(u^*)=0.
\end{aligned}
\end{eqnarray}
Combining (\ref{state}), (\ref{mu_budeng}), (\ref{adjoint}), (\ref{grad_lage}), the optimality condition system (\ref{con_first}) is obtained.
\end{proof}

Analogously, $J^{\prime}(u^{*})$ can be derived as
\begin{eqnarray}\label{grad_j}
\begin{aligned}
J^{\prime}(u^{*})&=\mathbb{E}\left[\hat{p}_t^{*}b_{u}^{\prime}(y_t^{*},u^{*})+\hat{q}_t^{*} \sigma_{u}^{\prime}(y_t^{*},u^{*})\right]+j^{\prime}(u^{*}).
\end{aligned}
\end{eqnarray}
Here $(\hat{p}_t^{*},\hat{q}_t^{*})$ is the solution to the following BSDE:
\begin{eqnarray*}\label{J_p}
\left\{\begin{aligned}
-d\hat{p}_t^{*}&=\hat{f}(y_t^{*},\hat{p}_t^{*},\hat{q}_t^{*},u^{*})dt-\hat{q}_t^{*}dW_t,\\ \hat{p}_T^{*}&=g(y_T^{*}),
\end{aligned}\right.
\end{eqnarray*}
where $\hat{f}(y,p,q,u):=f(y,p,q,u,\mu)-\mu$.

We close this section by the following lemma of projection.
\begin{Lemma}(\cite{max3})\label{pro_lemma}
Let $\mathbb{Q}$ be the projection operator from $L^2([0,T];\mathbb{R})$ onto a convex set $Q$ such that
\begin{eqnarray}\begin{aligned}\label{projector}
\|v-\mathbb{Q}v\|=\min_{z(t)\in Q}\|v-z\|.
\end{aligned}
\end{eqnarray}
Then $\mathbb{Q}v$ satisfies (\ref{projector}) if and only if, for any $z(t)\in Q$
\begin{eqnarray}\begin{aligned}\label{project_budengshi}
(\mathbb{Q}v-v,z-\mathbb{Q}v)\geq 0.
\end{aligned}
\end{eqnarray}
\end{Lemma}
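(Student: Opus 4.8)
The statement to prove is the characterization of the projection onto a convex set: $\mathbb{Q}v$ realizes the minimum in \eqref{projector} if and only if the variational inequality \eqref{project_budengshi} holds for all $z(t) \in Q$.

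\medskip

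\textbf{Plan of proof.} The argument is the classical one for projections onto convex sets in a Hilbert space, here $L^2([0,T];\mathbb{R})$ with inner product $(\cdot,\cdot)$. I would prove the two implications separately.

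For the \emph{necessity} direction, suppose $\mathbb{Q}v$ attains the minimum. Fix any $z(t) \in Q$ and, using convexity of $Q$, note that $z_\lambda := \mathbb{Q}v + \lambda(z - \mathbb{Q}v) = (1-\lambda)\mathbb{Q}v + \lambda z \in Q$ for every $\lambda \in [0,1]$. Define $\varphi(\lambda) := \|v - z_\lambda\|^2$. By minimality, $\varphi$ attains its minimum over $[0,1]$ at $\lambda = 0$, so the one-sided derivative satisfies $\varphi'(0^+) \geq 0$. Expanding,
\begin{eqnarray*}
\varphi(\lambda) = \|v - \mathbb{Q}v\|^2 - 2\lambda\,(v - \mathbb{Q}v,\, z - \mathbb{Q}v) + \lambda^2 \|z - \mathbb{Q}v\|^2,
\end{eqnarray*}
hence $\varphi'(0^+) = -2(v - \mathbb{Q}v,\, z - \mathbb{Q}v) \geq 0$, which rearranges to $(\mathbb{Q}v - v,\, z - \mathbb{Q}v) \geq 0$, i.e.\ \eqref{project_budengshi}.

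For the \emph{sufficiency} direction, suppose \eqref{project_budengshi} holds for all $z(t) \in Q$. For any such $z$, write $v - z = (v - \mathbb{Q}v) + (\mathbb{Q}v - z)$ and expand the square:
\begin{eqnarray*}
\|v - z\|^2 = \|v - \mathbb{Q}v\|^2 + 2(v - \mathbb{Q}v,\, \mathbb{Q}v - z) + \|\mathbb{Q}v - z\|^2.
\end{eqnarray*}
The middle term equals $2(\mathbb{Q}v - v,\, z - \mathbb{Q}v) \geq 0$ by hypothesis, and the last term is nonnegative, so $\|v - z\|^2 \geq \|v - \mathbb{Q}v\|^2$. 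Taking the infimum over $z(t) \in Q$ shows $\mathbb{Q}v$ attains the minimum in \eqref{projector}.

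\medskip

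\textbf{Main obstacle.} There is no serious obstacle here; this is a standard Hilbert-space fact. The only point requiring a small amount of care is the necessity direction: one must stay within $Q$ while perturbing $\mathbb{Q}v$, which is exactly what convexity of $Q$ provides through the segment $z_\lambda$, and one must use a \emph{one-sided} derivative at the boundary point $\lambda = 0$ rather than a two-sided one. Since the paper merely cites \cite{max3} for this lemma, I would keep the write-up brief, essentially as above.
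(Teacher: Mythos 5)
Your proof is correct and is the standard Hilbert-space projection argument; the paper itself gives no proof of this lemma, only the citation to \cite{max3}, and the cited source's argument is exactly this classical one (one-sided derivative along the segment $z_\lambda$ for necessity, expansion of $\|v-z\|^2$ for sufficiency). Nothing to add.
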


\section{Gradient projection method and temporal discretization for control}
In this section, the gradient projection method is first presented with the following drift coefficient:
\begin{equation}\label{eq_linbyu}
    b(y, u) = b_{y}^{\prime}(t) y + b_{u}^{\prime}(t) u + m(t), \quad (t, y, u) \in [0, T] \times \R \times \R,
\end{equation}
where $b_{y}^{\prime}(t)$, $b_{u}^{\prime}(t)$ and $m(t)$ are the deterministic functions of $t$, and the derivatives of $b_{y}^{\prime}(t)$ and $b_{u}^{\prime}(t)$ with respect to $t$ are denoted by $b_{y,t}^{\prime\prime}(t)$ and $b_{u,t}^{\prime\prime}(t)$.
The central idea is to ensure that the state constraint holds by constructing specific multiplier at each iteration step. Then the step function is used to approximate the optimal control. 

\subsection{Gradient projection method}\label{GPM}
In the case where the drift coefficient $b$ satisfying \eqref{eq_linbyu}, $U(\delta)$ is a convex set and further it holds that
\begin{eqnarray*}
\begin{aligned} 
\left(J^{\prime}(u^{*}),v-u^{*}\right)\geq 0,
\end{aligned}
\end{eqnarray*}
which implies that 
\begin{eqnarray}\label{fix_point}
\begin{aligned} 
\left(u^{*}-\left(u^{*}-\rho J^{\prime}(u^{*})\right),v-u^{*}\right)\geq 0,
\end{aligned}
\end{eqnarray}
where $\rho$ is a positive constant.
From (\ref{project_budengshi}) in Lemma \ref{pro_lemma}, (\ref{fix_point}) implies that 
\begin{equation}\label{xian_ita}
\begin{aligned}
u^{*}=\mathbb{P}\big(u^{*}-\rho J^{\prime}(u^{*})\big),
\end{aligned}
\end{equation} 
where $\mathbb{P}$ is the projection operator from $L^2([0,T];\mathbb{R})$ onto the convex set $U(\delta)$ satisfied
\begin{equation}\label{define_pu}
\begin{aligned}
(\mathbb{P}w-w,v-\mathbb{P}w)\geq 0,\ \forall v\in U(\delta).
\end{aligned}
\end{equation}
Then, according to (\ref{xian_ita}), the following iterative scheme is given to approach the optimal control $u^{*}$ for a given initial value $u^{0,*}\in L^2([0,T];\mathbb{R})$:
\begin{equation}\label{ita_con}
\begin{aligned}
u^{i+1,*}=\mathbb{P}\big(u^{i,*}-\rho J^{\prime}(u^{i,*})\big),\ i=0,1,2,....
\end{aligned}
\end{equation}
Next the form of projection $\mathbb{P}$ is given and it is proven that for the iterative scheme (\ref{ita_con}) we can construct a explicit multiplier such that the $y_t^{i+1,*}$ solved by $u^{i+1,*}$ is within the constraint set.
\begin{Theorem}\label{mu_exist}
Under Assumption \ref{jiashe1}, for iterative scheme (\ref{ita_con}) there exist a explicit multiplier $\mu^{i,*}\geq0$ such that $\mathbb{E}\left[\int_{0}^{T}y_t^{i+1,*}dt\right]\leq \delta$ with a given $u^{i,*}\in L^2([0,T];\mathbb{R})$.
\end{Theorem}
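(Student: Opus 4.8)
The plan is to use the fact that, once the drift is linear as in \eqref{eq_linbyu}, the \emph{implicit} state constraint $G(u)\le 0$ collapses to an \emph{explicit affine} constraint on $u$, so that $U(\delta)$ is a half-space, its projection $\mathbb{P}$ has a closed form, and the required multiplier $\mu^{i,*}$ can simply be read off from that form.

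First I would make the affine structure of $G$ explicit. Taking expectations in \eqref{state} with $b$ of the form \eqref{eq_linbyu} removes the stochastic integral, so $\bar y_t:=\E{y_t^u}$ solves the linear ODE $\dot{\bar y}_t=b_y'(t)\bar y_t+b_u'(t)u(t)+m(t)$, $\bar y_0=y^0$, whose well-posedness is guaranteed by Assumption \ref{jiashe1}. Introducing the deterministic, strictly positive fundamental solution $\Phi(t,s)=\exp\!\big(\int_s^t b_y'(r)\,dr\big)$ and applying Fubini's theorem, one gets $G(u)=(\psi,u)+c_0$, where $\psi(s):=b_u'(s)\pi(s)$ with $\pi(s):=\int_s^T\Phi(t,s)\,dt$, and $c_0:=\big(\int_0^T\Phi(t,0)\,dt\big)y^0+\int_0^T\pi(s)m(s)\,ds-\delta$; all of $\psi$, $\pi$, $c_0$ are explicit deterministic quantities. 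Hence $U(\delta)=\{u\in L^2([0,T];\mathbb{R}):(\psi,u)\le -c_0\}$ is a closed convex half-space. (In the degenerate case $b_u'\equiv 0$ we have $\psi\equiv 0$, so $\E{y_t^u}$ does not depend on $u$ at all; nonemptiness of $U(\delta)$ then forces $c_0\le 0$, $U(\delta)=L^2([0,T];\mathbb{R})$, and $\mu^{i,*}=0$ serves; so assume $\|\psi\|>0$ henceforth.)

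Next I would compute the projection and identify the multiplier. By the characterization \eqref{project_budengshi} of Lemma \ref{pro_lemma}, the projection onto this half-space is $\mathbb{P}w=w-\lambda(w)\psi$ with $\lambda(w):=\max\!\big(0,\,G(w)/\|\psi\|^2\big)\ge 0$; this is checked directly, since $\mathbb{P}w\in U(\delta)$ and, on the active branch, $(\psi,\mathbb{P}w)=-c_0\ge(\psi,v)$ for all $v\in U(\delta)$, whence $(\mathbb{P}w-w,v-\mathbb{P}w)=-\lambda(w)(\psi,v-\mathbb{P}w)\ge 0$. Applying it with $w^i:=u^{i,*}-\rho J'(u^{i,*})$ gives $u^{i+1,*}=w^i-\lambda^i\psi$, where $\lambda^i:=\lambda(w^i)\ge 0$ is an explicit number because $G(w^i)=(\psi,w^i)+c_0$ is. Setting $\mu^{i,*}:=\lambda^i/\rho\ge 0$, the iteration \eqref{ita_con} reads $u^{i+1,*}=u^{i,*}-\rho\big(J'(u^{i,*})+\mu^{i,*}\psi\big)$; moreover $\pi$ is precisely the deterministic solution of $-d\pi_s=(b_y'(s)\pi_s+1)\,ds$, $\pi_T=0$, that is, the $\mu$-part of the adjoint BSDE appearing in \eqref{con_first}, so the correction $\mu^{i,*}\psi=\mu^{i,*}b_u'(\cdot)\pi(\cdot)$ is exactly the contribution the constant multiplier $\mu^{i,*}$ makes to the Lagrangian gradient through that adjoint equation --- which is the sense in which $\mu^{i,*}$ is the ``explicit multiplier'' of the statement.

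Finally, feasibility of $y_t^{i+1,*}$ follows at once from the affine form of $G$: if $G(w^i)\le 0$ then $\lambda^i=0$ and $G(u^{i+1,*})=G(w^i)\le 0$; otherwise $G(u^{i+1,*})=G(w^i-\lambda^i\psi)=G(w^i)-\lambda^i\|\psi\|^2=0$. In either case $\int_0^T\E{y_t^{i+1,*}}\,dt=G(u^{i+1,*})+\delta\le\delta$, as claimed. The main obstacle is the first part --- recognizing and exploiting that linearity of the drift converts the implicit, state-dependent constraint into an explicit affine one, and carrying out the linear-ODE plus Fubini bookkeeping that puts $\psi$, $c_0$ and hence $\mu^{i,*}$ in closed form; once that is done, the closed-form projection is a routine verification against Lemma \ref{pro_lemma} and the feasibility estimate is elementary.
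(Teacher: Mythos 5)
Your proposal is correct, and it lands on exactly the same projection and multiplier as the paper, but it gets there by a cleaner route. The paper's proof posits the rank-one correction $\mathbb{P}u^{i+\frac12,*}=u^{i+\frac12,*}-\rho\mu^{i,*}\psi(t)b_u'(t)$ with $\psi$ solving \eqref{psi_xian}, introduces a second auxiliary ODE \eqref{varphi_con} for $\varphi(t)$ to track how the correction propagates to $\mathbb{E}[y_t^{i+1,*}]$, chooses $\mu^{i,*}$ by \eqref{def_mu} to cancel the constraint violation, and then verifies the variational inequality \eqref{define_pu} by integration by parts, using the identity $\int_0^T(\psi b_u')^2\,dt=\int_0^T\varphi\,dt$ from \eqref{pro_con_3}. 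You instead observe up front that $G(u)=(b_u'\pi,u)+c_0$ is affine (your $\pi$ is the paper's $\psi$, via the variation-of-constants formula and Fubini), so $U(\delta)$ is a half-space and $\mathbb{P}$ is the textbook half-space projection $w\mapsto w-\max\{0,G(w)\}\psi/\|\psi\|^2$; since $\|b_u'\pi\|^2=\int_0^T\varphi\,dt$, your $\lambda^i/\rho$ coincides with the paper's $\mu^{i,*}$ and your direction $b_u'\pi$ with the paper's $\psi b_u'$. What your viewpoint buys: the feasibility check and the variational inequality become one-line consequences of the affine form of $G$ rather than ODE manipulations, and you explicitly handle the degenerate case $b_u'\equiv 0$, in which the paper's denominator $\rho\int_0^T\varphi\,dt$ vanishes and formula \eqref{def_mu} is ill-posed (the paper only excludes this later via Assumption \ref{jiashe2}). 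What the paper's route buys is that the objects $\psi$, $\varphi$ it introduces are reused verbatim in the discrete scheme \eqref{varphi_dis} and in the error analysis of Section 4, so the ODE formulation is not gratuitous there. One cosmetic caution: your $\psi$ denotes $b_u'\pi$ while the paper's $\psi$ denotes your $\pi$; keep the two straight if you align your write-up with \eqref{pro_xian}--\eqref{def_mu}.
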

\begin{proof}
We will establish the result in two steps: the first step gives the explicit forms of projection $\mathbb{P}$ as well as multiplier $\mu^{i,*}$ and proves that $\mathbb{E}\left[\int_{0}^{T}y_t^{i+1,*}dt\right]\leq \delta$; the second step proves that (\ref{define_pu}) is satisfied for projection $\mathbb{P}$. 

\textbf{Step 1}. For notational simplicity, we set $u^{i+\frac{1}{2},*}:=u^{i,*}-\rho J^{\prime}(u^{i,*})$ and the form of projection $\mathbb{P}$ is given by
\begin{equation}\label{pro_xian}
\begin{aligned}
\mathbb{P}\big(u^{i,*}-\rho J^{\prime}(u^{i,*})\big)=\mathbb{P}u^{i+\frac{1}{2},*}
=u^{i+\frac{1}{2},*}-\rho\mu^{i,*}\psi(t)b_u^{\prime}(t),
\end{aligned}
\end{equation}
where $\psi(t)$ satisfies the following ODE:
\begin{equation}\label{psi_xian}
\left\{\begin{aligned}
-d\psi(t)&=\left(1+\psi(t)b_y^{\prime}(t)\right)dt,\\ 
\psi(T)&=0.
\end{aligned}\right.
\end{equation}
We denote by $y_t^{i+\frac{1}{2},*}$ the state variable solved by $u^{i+\frac{1}{2},*}$ and $y_0^{i+\frac{1}{2},*}=y^0$. 
From (\ref{ita_con}) and (\ref{pro_xian}), it holds that $u^{i+1,*}=u^{i+\frac{1}{2},*}-\rho\mu^{i,*}\psi(t)b_u^{\prime}(t)$, which yields
\begin{eqnarray*}
\begin{aligned}
\mathbb{E}\left[y_t^{i+1,*}\right]=\mathbb{E}\left[y_t^{i+\frac{1}{2},*}\right]-\rho\mu^{i,*}\varphi(t),
\end{aligned}
\end{eqnarray*}
where $\varphi(t)$ satisfies
\begin{eqnarray}\label{varphi_con}
\left\{\begin{aligned}
d\varphi(t)&=b_y^{\prime}(t)\varphi(t)+\left(b_u^{\prime}(t)\right)^2\psi(t)dt,\\ 
\varphi(0)&=0.
\end{aligned}\right.
\end{eqnarray}
When the multiplier $\mu^{i,*}$ is chosen as
\begin{eqnarray}\label{def_mu}
\begin{aligned}
\mu^{i,*}=\frac{\max\left\{\mathbb{E}\left[\int_0^Ty_t^{i+\frac{1}{2},*}dt\right]-\delta,0\right\}}{\rho\int_{0}^{T}\varphi(t)dt},
\end{aligned}
\end{eqnarray}
it follows that $\mathbb{E}\left[\int_0^Ty_t^{i+1,*}dt\right]\leq\delta$.

\textbf{Step 2}. We verify that projection $\mathbb{P}$ given in (\ref{pro_xian}) satisfies (\ref{define_pu}).
For any $v\in U(\delta)$, we have
\begin{eqnarray*}\label{pro_con_1}
\begin{aligned}
&(u^{i+\frac{1}{2},*}-\mathbb{P}u^{i+\frac{1}{2},*},v-\mathbb{P}u^{i+\frac{1}{2},*})\\
&=\left(u^{i+\frac{1}{2},*}-(u^{i+\frac{1}{2},*}-\rho\mu^{i,*}\psi b_u^{\prime}),
v-(u^{i+\frac{1}{2},*}-\rho\mu^{i,*}\psi b_u^{\prime})    \right)\\
&=(\rho\mu^{i,*}\psi b_u^{\prime},
v-u^{i+\frac{1}{2},*}+\rho\mu^{i,*}\psi b_u^{\prime}  ).
\end{aligned}
\end{eqnarray*}
By the integration by parts formula and (\ref{psi_xian}), we arrive at
\begin{eqnarray}
\begin{aligned}\label{pro_con_2}
&(\rho\mu^{i,*}\psi b_u^{\prime},v-u^{i+\frac{1}{2},*})\\
&=\int_0^T\left(\rho\mu^{i,*}\psi(t)
\left(\frac{d\mathbb{E}\left[y_t^v-y_t^{i+\frac{1}{2},*}\right]}{dt}-b_{y}^{\prime}(t)
\mathbb{E}\left[y_t^{v}-y_t^{i+\frac{1}{2},*}\right] \right)   \right)dt\\
&=\int_0^T\rho\mu^{i,*}\psi(t)d\mathbb{E}\left[y_t^v-y_t^{i+\frac{1}{2},*}\right]
-\int_0^T\rho\mu^{i,*}\psi(t)b_{y}^{\prime}(t)\mathbb{E}\left[y_t^{v}-y_t^{i+\frac{1}{2},*}\right]dt\\
&=\int_0^T\rho\mu^{i,*}\mathbb{E}\left[y_t^v-y_t^{i+\frac{1}{2},*}
\right]\left(1+\psi(t)b_y^{\prime}(t)\right)dt
-\int_0^T\rho\mu^{i,*}\psi(t)b_{y}^{\prime}(t)\mathbb{E}\left[y_t^{v}-y_t^{i+\frac{1}{2},*}\right]dt\\
&=\int_0^T\rho\mu^{i,*}\mathbb{E}\left[y_t^v-y_t^{i+\frac{1}{2},*}\right]dt.
\end{aligned}
\end{eqnarray}
By ODEs (\ref{psi_xian}) and (\ref{varphi_con}), we obtain
\begin{eqnarray*}
\begin{aligned}
\int_0^T\psi(t)\left(b_y^{\prime}(t)\varphi(t)+\left(b_u^{\prime}(t)\right)^2
\psi(t) \right) dt
=\int_0^T\varphi(t)d\left(-\psi(t)\right)=\int_0^T\varphi(t)\left(1+\psi(t)b_y^{\prime}(t)\right)dt,
\end{aligned}
\end{eqnarray*}
which implies that
\begin{eqnarray}\label{pro_con_3}
\begin{aligned}
\int_0^T\left(\psi(t)b_u^{\prime}(t)\right)^2dt=\int_0^T\varphi(t)dt.
\end{aligned}
\end{eqnarray}
From (\ref{pro_con_2}), (\ref{pro_con_3}) and the definition of $\mu^{i,*}$ in (\ref{def_mu}), it holds that
\begin{eqnarray*}
\begin{aligned}
&(u^{i+\frac{1}{2},*}-\mathbb{P}u^{i+\frac{1}{2},*},v-\mathbb{P}u^{i+\frac{1}{2},*} )\\
&=\rho\mu^{i,*}\int_0^T\mathbb{E}\left[y_t^v\right]dt-\rho\mu^{i,*}\int_0^T\mathbb{E}\left[y_t^{i+\frac{1}{2},*}\right]dt+(\rho\mu^{i,*})^2\int_0^T\varphi(t)dt\\
&\leq\rho\mu^{i,*}\delta-\rho\mu^{i,*}\int_0^T\mathbb{E}\left[y_t^{i+\frac{1}{2},*}\right]dt
+\rho\mu^{i,*}\max\left\{\mathbb{E}\int_0^T\left[y_t^{i+\frac{1}{2},*}\right]dt-\delta,0\right\}\\
&=\rho\mu^{i,*}\left(\delta-\int_0^T\mathbb{E}\left[y_t^{i+\frac{1}{2},*}\right]dt+\max\left\{\int_0^T\mathbb{E}\left[y_t^{i+\frac{1}{2},*}\right]dt-\delta,0\right\}\right)
= 0.
\end{aligned}
\end{eqnarray*}
This completes the proof.
\end{proof}

We further have the following property for the projection $\mathbb{P}$.
\begin{Corollary}\label{xingzhi_P_con}
For the projection $\mathbb{P}$ given in (\ref{pro_xian}), it holds that
\begin{eqnarray*}
\begin{aligned}
\|\mathbb{P} w-\mathbb{P} z\|\leq\|w-z\|,
\end{aligned}
\end{eqnarray*}
for any $w,z\in L^2([0,T];\mathbb{R})$.
\end{Corollary}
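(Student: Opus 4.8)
The plan is to recognize that $\mathbb{P}$ is exactly the metric projection of $L^2([0,T];\mathbb{R})$ onto the convex set $U(\delta)$, so that the assertion reduces to the well-known fact that such a projection is nonexpansive. First I would observe that the computation performed in Step~2 of the proof of Theorem~\ref{mu_exist} uses nothing about $u^{i+\frac{1}{2},*}$ beyond $u^{i+\frac{1}{2},*}\in L^2([0,T];\mathbb{R})$: it relies only on the linearity of the map $w\mapsto\mathbb{E}[y^w_t]$ (a consequence of the linear drift \eqref{eq_linbyu}), on the defining formulas \eqref{pro_xian}--\eqref{def_mu} for $\mathbb{P}$, and on the identity \eqref{pro_con_3}. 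Consequently the same argument yields, for every $w\in L^2([0,T];\mathbb{R})$, that $\mathbb{P}w\in U(\delta)$ and
\[
(\mathbb{P}w-w,\,v-\mathbb{P}w)\geq 0,\qquad \forall\,v\in U(\delta),
\]
which is precisely the variational characterization \eqref{define_pu} (equivalently \eqref{project_budengshi} of Lemma~\ref{pro_lemma}) of the projection onto $U(\delta)$.

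Granting this, I would derive the nonexpansiveness in the standard way. Fix $w,z\in L^2([0,T];\mathbb{R})$ and apply the displayed inequality twice: once for $w$ with the test point $v=\mathbb{P}z\in U(\delta)$, and once for $z$ with the test point $v=\mathbb{P}w\in U(\delta)$. Adding the two resulting inequalities and regrouping the inner products gives
\[
\big((w-z)-(\mathbb{P}w-\mathbb{P}z),\,\mathbb{P}w-\mathbb{P}z\big)\geq 0,
\]
that is, $\norm{\mathbb{P}w-\mathbb{P}z}^2\leq (w-z,\,\mathbb{P}w-\mathbb{P}z)$. The Cauchy--Schwarz inequality then gives $\norm{\mathbb{P}w-\mathbb{P}z}^2\leq\norm{w-z}\,\norm{\mathbb{P}w-\mathbb{P}z}$, and dividing by $\norm{\mathbb{P}w-\mathbb{P}z}$ (the case $\mathbb{P}w=\mathbb{P}z$ being trivial) yields the claim.

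There is no substantial obstacle here; the points needing a little care are the sign bookkeeping when the two variational inequalities are added, and the remark that \eqref{define_pu} indeed holds for arbitrary $w$, not merely for the iterate appearing in Theorem~\ref{mu_exist}. If one prefers a fully self-contained argument that bypasses Step~2 altogether, one can instead use the closed form $\mathbb{P}w=w-\norm{g}^{-2}\max\{G(w),0\}\,g$ with $g(t):=\psi(t)b_u'(t)$, which follows on combining \eqref{pro_xian}, \eqref{def_mu} and \eqref{pro_con_3}: since the drift is linear, $G$ from \eqref{Gu} is affine with constant gradient $g$, so $(w-z,g)=G(w)-G(z)$, and expanding $\norm{\mathbb{P}w-\mathbb{P}z}^2$ while controlling the cross term by the elementary scalar inequality $(a-b)(a^+-b^+)\geq(a^+-b^+)^2$ (where $a^+:=\max\{a,0\}$) leads to $\norm{\mathbb{P}w-\mathbb{P}z}^2\leq\norm{w-z}^2$.
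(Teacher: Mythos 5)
Your proposal is correct and follows essentially the same route as the paper, whose entire proof is the one-line remark that \eqref{define_pu} together with the Cauchy--Schwarz inequality yields the result; your argument simply spells out that standard nonexpansiveness computation (apply \eqref{define_pu} twice with test points $\mathbb{P}z$ and $\mathbb{P}w$, add, and use Cauchy--Schwarz). Your preliminary observation that the variational inequality holds for arbitrary $w\in L^2([0,T];\mathbb{R})$, not just the iterate of Theorem~\ref{mu_exist}, is a worthwhile clarification but does not change the method.
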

\begin{proof}
Using (\ref{define_pu}) and Cauchy-Schwarz inequality yields the result.
\end{proof}

\subsection{Temporal discretization for optimal control}
The optimal control $u^*$ is approximated by step function. For a positive integer $N$, a uniform time partition $\Pi=\{t_0,...,t_N\}$ over $[0,T]$ is introduced:
$$0=t_0<t_1<...<t_N=T,\ t_{n+1}-t_n=T/N=\Delta t.$$
We define the associated space of piecewise constant functions by
$$U_N=\left\{u\in L^2([0,T];\mathbb{R})|u=\sum\limits_{i=0}^{N-1}\beta_i\chi_{I_{i}^{N}}\ a.e.,\ \beta_i\in\mathbb{R}\right\},$$
where $I_{i}^{N}$ denotes the interval $[t_{i},t_{i+1})$ for $0\leq i\leq N-2$, and $I_{N-1}^{N}=[t_{N-1},t_{N}]$. $\chi_{I_{i}^{N}}(t)$ denotes the indicator function on $I_{i}^{N}$.

Let $U^{N}(\delta)=U_{N}\cap U(\delta)$, the approximated problem of (\ref{reduced})-(\ref{con_U}) is given by
\begin{eqnarray}\label{lisan_ocp}
\begin{aligned}
\mbox{Find}\ u^{*,N}\in U^N(\delta)\ \mbox{such}\ \mbox{that}\ J(u^{*,N})=\min\limits_{u^{N}\in U^N(\delta)}J(u^{N}).
\end{aligned}
\end{eqnarray}
Using similar argument as Theorem \ref{mu}, one can show that
\begin{eqnarray}\label{dis_fiest}\left\{
\begin{aligned}
&dy_t^{*,N}=b(y_t^{*,N},u^{*,N})dt+\sigma(y_t^{*,N},u^{*,N})dW_t,\ y^{*,N}_0=y^0,\\
&-dp_t^{*,N}=f(y_t^{*,N},p_t^{*,N},q_t^{*,N},u^{*,N},\mu^{*,N})dt-q_t^{*,N}dW_t,\
p_T^{*,N}=g(y_T^{*,N}),\\
&\left[\mu^{*,N},w_t-y_t^{*,N}\right]\leq0,\ \forall w_t\in K,\\
&
\mathbb{E}\left[p_t^{*,N}b_{u}^{\prime}(t)+q_t^{*,N} \sigma_{u}^{\prime}(y_t^{*,N},u^{*,N})\right]+j^{\prime}(u^{*,N})=0.
\end{aligned}\right.
\end{eqnarray}
Analogously, it holds that
\begin{eqnarray}\label{L_mu_dis}
\begin{aligned}
J^{\prime}(u^{*,N})=\mathbb{E}\left[\hat{p}_t^{*,N}b_{u}^{\prime}(t)+\hat{q}_t^{*,N} \sigma_{u}^{\prime}(y_t^{*,N},u^{*,N})\right]+j^{\prime}(u^{*,N}),
\end{aligned}
\end{eqnarray}
where
\begin{eqnarray}\label{J_p_dis}
\left\{\begin{aligned}
-d\hat{p}_t^{*,N}&=\hat{f}(y_t^{*,N},\hat{p}_t^{*,N},\hat{q}_t^{*,N},u^{*,N})dt-\hat{q}_t^{*,N}dW_t,\\ \hat{p}_T^{*,N}&=g(y_T^{*,N}).
\end{aligned}\right.
\end{eqnarray}

As discussed in subsection \ref{GPM}, for the SOCP (\ref{lisan_ocp}) with control discretization, we have the following iterative scheme for a given initial value $u^{0,N}\in U_{N}$:
\begin{equation}\label{ita_xian_dis}
\begin{aligned}
u^{i+1,N}
=\mathbb{P}_N\big(u^{i,N}-\rho J^{\prime}(u^{i,N})\big),\ i=0,1,2,...,
\end{aligned}
\end{equation}
where $\rho>0$ is a constant and $\mathbb{P}_N$ is the projection from $L^2([0,T];\mathbb{R})$ to $U^N(\delta)$ satisfied 
\begin{equation*}\label{define_puN}
\begin{aligned}
(\mathbb{P}_Nw-w,v-\mathbb{P}_Nw)\geq 0,\ \forall v\in U^N(\delta).
\end{aligned}
\end{equation*}
For the projection $\mathbb{P}_N$, we have the following similar conclusions like Theorem \ref{mu_exist} and Corollary \ref{xingzhi_P_con}.
\begin{Theorem}\label{muh_exist}
Under Assumption \ref{jiashe1}, for iterative scheme (\ref{ita_xian_dis}) there exist a explicit multiplier $\mu^{i,N}\geq0$ such that $\mathbb{E}\left[\int_{0}^{T}y_t^{i+1,N}dt\right]\leq \delta$ with a given $u^{i,N}\in U_N$, where $y_t^{i+1,N}$ is the state variable solved by $u^{i,N}$ as the control variable. 
\end{Theorem}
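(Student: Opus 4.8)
The plan is to mirror the argument of Theorem~\ref{mu_exist} almost verbatim, with the only genuinely new ingredient being the bookkeeping needed to keep everything inside the finite-dimensional subspace $U_N$. As in the continuous case, I would set $u^{i+\frac12,N} := u^{i,N} - \rho J'(u^{i,N})$ and then \emph{define} the projection by
\begin{equation*}
\mathbb{P}_N\big(u^{i,N}-\rho J'(u^{i,N})\big) = \Pi_N u^{i+\frac12,N} - \rho\mu^{i,N}\,\Pi_N\!\big(\psi(t)b_u'(t)\big),
\end{equation*}
where $\Pi_N$ denotes the $L^2$-orthogonal projection onto $U_N$ (equivalently, averaging over each $I_i^N$), and $\psi$ solves the same terminal-value ODE~\eqref{psi_xian}. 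The candidate multiplier is taken to be the discrete analogue of~\eqref{def_mu}, namely
\begin{equation*}
\mu^{i,N} = \frac{\max\big\{\mathbb{E}\big[\int_0^T y_t^{i+\frac12,N}\,dt\big]-\delta,\,0\big\}}{\rho\int_0^T\varphi(t)\,dt},
\end{equation*}
with $\varphi$ from~\eqref{varphi_con}; note $\int_0^T\varphi(t)\,dt>0$ since by~\eqref{pro_con_3} it equals $\int_0^T(\psi b_u')^2\,dt\geq 0$, and one must assume $b_u'\not\equiv 0$ to guarantee strict positivity (otherwise the constraint is vacuous and $\mu^{i,N}=0$ trivially works).

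For Step~1 (feasibility), the key observation is that the correction term $\rho\mu^{i,N}\Pi_N(\psi b_u')$ is a perturbation of the control, and I would compute $\mathbb{E}[y_t^{i+1,N}]$ in terms of $\mathbb{E}[y_t^{i+\frac12,N}]$. Here a small subtlety arises: since the control correction is $\Pi_N(\psi b_u')$ rather than $\psi b_u'$ itself, the resulting state perturbation is not exactly $\varphi(t)$. The cleanest route is to use that $\delta$ is a real number and $K$ is defined through the \emph{integral} $\int_0^T\mathbb{E}[y_t]\,dt$: because the state SDE depends on the control linearly through $\int$-type functionals and because $\Pi_N$ is the conditional expectation onto the $\sigma$-algebra generated by $\Pi$, the quantity $\int_0^T\mathbb{E}[\,\cdot\,]\,dt$ applied to the perturbation is unchanged when $\psi b_u'$ is replaced by $\Pi_N(\psi b_u')$ \emph{provided} one also projects the test-weight $\psi$. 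This is exactly the place where I expect to have to be careful; the honest fix is to build the correction so that $\int_0^T \mathbb{E}[y^{i+1,N}_t]\,dt = \int_0^T\mathbb{E}[y^{i+\frac12,N}_t]\,dt - \rho\mu^{i,N}\int_0^T\varphi(t)\,dt$ still holds, and then the choice of $\mu^{i,N}$ forces $\int_0^T\mathbb{E}[y^{i+1,N}_t]\,dt = \min\{\int_0^T\mathbb{E}[y^{i+\frac12,N}_t]\,dt,\ \delta\}\leq\delta$, as in Theorem~\ref{mu_exist}.

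For Step~2 (the variational inequality $(\mathbb{P}_N w - w, v - \mathbb{P}_N w)\geq 0$ for all $v\in U^N(\delta)$), I would reproduce the computation in~\eqref{pro_con_2}--\eqref{pro_con_3}: expand the inner product, use integration by parts with ODE~\eqref{psi_xian} to turn $(\rho\mu^{i,N}\psi b_u', v - u^{i+\frac12,N})$ into $\rho\mu^{i,N}\int_0^T\mathbb{E}[y_t^v - y_t^{i+\frac12,N}]\,dt$, invoke the identity $\int_0^T(\psi b_u')^2\,dt = \int_0^T\varphi\,dt$, and then bound using $v\in U^N(\delta)\Rightarrow \int_0^T\mathbb{E}[y_t^v]\,dt\leq\delta$ together with the explicit formula for $\mu^{i,N}$; the terms telescope to something $\leq 0$, giving the inequality. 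The one extra check relative to Theorem~\ref{mu_exist} is that $v - \mathbb{P}_N w$ stays in $U_N$ so that the $L^2$ inner products against $\Pi_N(\psi b_u')$ may be replaced by inner products against $\psi b_u'$ (again because $\Pi_N$ is self-adjoint and $v,\mathbb{P}_N w\in U_N$). With that, Lemma~\ref{pro_lemma} identifies $\mathbb{P}_N$ as the genuine projection onto $U^N(\delta)$ and the proof concludes exactly as before. The main obstacle, then, is not any deep new idea but the careful handling of the interaction between the time-discretization projection $\Pi_N$ and the feasibility identity; everything else is a routine transcription of the proof of Theorem~\ref{mu_exist}.
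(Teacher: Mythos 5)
Your overall strategy is the same as the paper's: define $\mathbb{P}_N$ by first applying the $L^2$-projection $\mathcal{P}$ onto $U_N$ to $u^{i+\frac12,N}$ and then subtracting a projected correction $\mathcal{P}\big(\rho\mu^{i,N}\psi b_u'\big)$, choose $\mu^{i,N}$ explicitly, and rerun Steps 1 and 2 of Theorem~\ref{mu_exist}. However, the multiplier formula you actually write down is wrong in both numerator and denominator, and you flag the resulting difficulty without resolving it. This is the one genuinely new ingredient of the discrete theorem, so I count it as a real gap.

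Concretely: since the control correction is $\mathcal{P}(\psi b_u')$ rather than $\psi b_u'$, the induced mean-state perturbation is not $\varphi$ from~\eqref{varphi_con} but the solution $\tilde{\varphi}$ of the modified ODE $d\tilde{\varphi}=\big(b_y'\tilde{\varphi}+b_u'\,\mathcal{P}(b_u'\psi)\big)dt$, $\tilde{\varphi}(0)=0$, whose integral satisfies $\int_0^T\tilde{\varphi}\,dt=\int_0^T\big(\mathcal{P}(\psi b_u')\big)^2dt\leq\int_0^T(\psi b_u')^2dt=\int_0^T\varphi\,dt$ by Bessel's inequality. With your denominator $\int_0^T\varphi\,dt$ the multiplier is generically too small, so the feasibility identity gives $\int_0^T\mathbb{E}[y_t^{i+1,N}]\,dt>\delta$ when the constraint is active; your suggestion that the integral functional is ``unchanged'' under replacing $\psi b_u'$ by its projection is exactly what fails. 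Moreover, the numerator must reference the state $\hat{y}_t^{i+1,N}$ generated by the \emph{projected} intermediate control $\hat{u}^{i+1,N}=\mathcal{P}u^{i+\frac12,N}$, not the state generated by $u^{i+\frac12,N}$, since feasibility of $y^{i+1,N}$ is measured relative to the former. The paper's choice is
\begin{equation*}
\mu^{i,N}=\frac{\max\Big\{\mathbb{E}\big[\int_0^T\hat{y}_t^{i+1,N}\,dt\big]-\delta,\,0\Big\}}{\rho\int_0^T\tilde{\varphi}(t)\,dt},
\end{equation*}
and with this choice both the feasibility identity in Step 1 and the telescoping to $0$ at the end of Step 2 (where the term $(\rho\mu^{i,N})^2\|\mathcal{P}(\psi b_u')\|^2=(\rho\mu^{i,N})^2\int_0^T\tilde{\varphi}\,dt$ must cancel against $\rho\mu^{i,N}\max\{\cdots,0\}$) go through. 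Your Step 2 observations about self-adjointness of $\mathcal{P}$ and $v-\mathbb{P}_N w\in U_N$ are correct and are indeed what makes the rest of the transcription routine, but without the corrected $\mu^{i,N}$ neither step closes.
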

\begin{proof}
We set $u^{i+\frac{1}{2},N}:=u^{i,N}-\rho J^{\prime}(u^{i,N})$, $\hat{u}^{i+1,N}:= \mathcal{P}(u^{i+\frac{1}{2},N})$ and the form of projection $\mathbb{P}_N$ is given by
\begin{eqnarray}\label{define_P_dis}
\begin{aligned}
\mathbb{P}_N\big(u^{i,N}-\rho J^{\prime}(u^{i,N})\big)=\mathbb{P}_Nu^{i+\frac{1}{2},N}:=
\hat{u}^{i+1,N}-\mathcal{P}\big(\rho\mu^{i,N}\psi(t)b_u^{\prime}(t)\big),
\end{aligned}
\end{eqnarray}
where $\mathcal{P}$ is the $L^2$-projection operator from $L^2([0,T];\mathbb{R})$ onto $U_N$ and it is defined by
\begin{equation*}\label{piecewise_con}
\begin{aligned}
(\mathcal{P}v,w)=(v,w),\ \forall w\in U_N,\ v\in L^2([0,T];\mathbb{R}).
\end{aligned}
\end{equation*}
For the projection $\mathbb{P}_N$ given in (\ref{define_P_dis}), the following $\mu^{i,N}$ is given
\begin{eqnarray}\label{def_mu_N}
\begin{aligned}
\mu^{i,N}=\frac{\max\left\{\mathbb{E}\left[\int_0^T\hat{y}_t^{i+1,N}dt\right]-\delta,0\right\}}{\rho\int_{0}^{T}\tilde{\varphi}(t)dt},
\end{aligned}
\end{eqnarray}
where $\hat{y}_t^{i+1,N}$ is solved by $\hat{u}^{i+1,N}$ and
$\tilde{\varphi}(t)$ satisfies
\begin{eqnarray}
\left\{\begin{aligned}\label{varphi_dis}
d\tilde{\varphi}(t)&=b_y^{\prime}(t)\tilde{\varphi}(t)+b_u^{\prime}(t)
\mathcal{P}\big(b_u^{\prime}(t)\psi(t)\big)dt,\\ 
\tilde{\varphi}(0)&=0.
\end{aligned}\right.
\end{eqnarray}
Then the conclusion can be proved as in Theorem \ref{mu_exist}.
\end{proof}

Analogously, we have the following conclusion.
\begin{Corollary}\label{xingzhi_P_dis}
For the operator $\mathbb{P}_N$ given in (\ref{define_P_dis}), it holds that
\begin{eqnarray*}
\begin{aligned}
\|\mathbb{P}_N w-\mathbb{P}_N z\|\leq\|w-z\|,
\end{aligned}
\end{eqnarray*}
for any $w,z\in L^2([0,T];\mathbb{R})$.
\end{Corollary}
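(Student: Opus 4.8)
The plan is to mimic the argument of Corollary \ref{xingzhi_P_con}, the only extra ingredient being that $\mathbb{P}_N$ now lands in $U^N(\delta)=U_N\cap U(\delta)$ rather than in $U(\delta)$. First I would record that, by the defining variational inequality of $\mathbb{P}_N$, for any $w,z\in L^2([0,T];\mathbb{R})$ we have
\begin{equation*}
(\mathbb{P}_N w-w,\,\mathbb{P}_N z-\mathbb{P}_N w)\geq 0,\qquad (\mathbb{P}_N z-z,\,\mathbb{P}_N w-\mathbb{P}_N z)\geq 0,
\end{equation*}
since $\mathbb{P}_N w,\mathbb{P}_N z\in U^N(\delta)$ are admissible test elements for each other. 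The first inequality is legitimate because $\mathbb{P}_N z\in U^N(\delta)$ and $U^N(\delta)$ is the convex set appearing in the definition of $\mathbb{P}_N$; here one should note that $U^N(\delta)$ is indeed convex, being the intersection of the convex sets $U_N$ and $U(\delta)$ (convexity of $U(\delta)$ was already observed in subsection \ref{GPM} for the linear drift \eqref{eq_linbyu}).

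Next I would add the two inequalities and rearrange. Writing $a:=\mathbb{P}_N w-\mathbb{P}_N z$ and $b:=w-z$, the sum gives $(a-b,\,a)\le 0$, i.e. $\|a\|^2\le (a,b)$. Then the Cauchy--Schwarz inequality yields $\|a\|^2\le\|a\|\,\|b\|$, hence $\|\mathbb{P}_N w-\mathbb{P}_N z\|\le\|w-z\|$, which is exactly the claimed nonexpansiveness. This is the standard firmly-nonexpansive/projection estimate and requires no properties of $\mathbb{P}_N$ beyond the variational characterization \eqref{define_puN} and convexity of the target set.

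There is essentially no serious obstacle: the only point that needs care is making sure the variational inequality \eqref{define_puN} is genuinely available with $\mathbb{P}_N$ as constructed in \eqref{define_P_dis}, i.e. that the explicit formula for $\mathbb{P}_N$ given there actually satisfies \eqref{define_puN}. This is the analogue of Step 2 in the proof of Theorem \ref{mu_exist}/\ref{muh_exist}; since Theorem \ref{muh_exist} already asserts that the construction \eqref{define_P_dis} plays the role of the projection onto $U^N(\delta)$, I would simply invoke it. Once \eqref{define_puN} is in hand, the proof is the two-line computation above, so the write-up can reasonably be compressed to a sentence, exactly as the paper does for Corollary \ref{xingzhi_P_con} ("Using \eqref{define_puN} and the Cauchy--Schwarz inequality yields the result").
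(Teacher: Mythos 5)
Your proposal is correct and follows essentially the same route as the paper: the paper proves the analogous Corollary for $\mathbb{P}$ in one line ("Using \eqref{define_pu} and Cauchy--Schwarz inequality yields the result") and states this one "analogously," which is precisely your argument of adding the two variational inequalities from \eqref{define_puN} and applying Cauchy--Schwarz. Your added remark that one must first know the explicit construction \eqref{define_P_dis} actually satisfies \eqref{define_puN} (which is the content of Theorem \ref{muh_exist}) is a fair and correct observation, not a deviation.
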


For notational simplicity, we will omit superscripts in the following analysis, e.g., $u=u^{*,N},y_t=y_t^{*,N},\mu=\mu^{*,N}$, etc.
Under mild assumptions, the backward equation (\ref{J_p_dis}) is well-posed \cite{max3,kac}. Moreover, by the nonlinear Feynman-Kac formula, the solution $(\hat{p}_t,\hat{q}_t)$ has the representations
\begin{eqnarray}\label{kac}
\begin{aligned}
\hat{p}_t=\hat{\eta}(t,y_t),\quad \hat{q}_t=\sigma\left(y_t,u(t)\right)\hat{\eta}_{y}^{\prime}(t,y_t).
\end{aligned}
\end{eqnarray}
Here
$\hat{\eta}(t,y):[0,T]\times\mathbb{R}\rightarrow\mathbb{R}$ is the solution of the following parabolic PDE:
\begin{eqnarray*}
\left\{\begin{aligned}
\mathcal{L}^0\hat{\eta}(t,y)&=-\hat{f}\left(y,\hat{\eta}(t,y),\sigma\left(y,u(t)\right)\hat{\eta}_{y}^{\prime}(t,y),u(t)\right),\\
\hat{\eta}(T,y)&=g(y),
\end{aligned}\right.
\end{eqnarray*}
where $\mathcal{L}^0\hat{\eta}(t,y):=\hat{\eta}_{t}^{\prime}(t,y)+b\left(y,u(t)\right)\hat{\eta}_{y}^{\prime}(t,y)+
\frac{1}{2}\sigma\left(y,u(t)\right)^2\hat{\eta}_{y,y}^{\prime\prime}(t,y).$
From the definitions of $\mathcal{L}_{u}^{\prime}(u,\mu)$ and $J^{\prime}(u)$, it can be derived that 
\begin{eqnarray}\label{relation}
\begin{aligned} 
p_t=\hat{p}_t+\mu\psi(t),\quad q_t=\hat{q}_t.
\end{aligned}
\end{eqnarray}
Note that $J^{\prime}(\cdot)$ is composed of the solutions of FBSDEs, which implies iterative scheme (\ref{ita_xian_dis}) is still not feasible in practice and we need to further numerically approximate the FBSDEs to get fully computable approximation $J_{N}^{\prime}(\cdot)$ of $J^{\prime}(\cdot)$. 

\section{Fully discrete iterative scheme and error estimates}
In this section, the discussion is still carried out under the linear drift coefficient. We first discretize FBSDEs numerically to get fully discrete scheme, where Euler method is used in time and the least square Monte Carlo method is used to approximate the conditional expectations, respectively. Then the fully discrete iterative scheme and algorithm are obtained. Further, the error estimates for the control and multiplier are deduced.

\subsection{Fully discrete iterative scheme}\label{lsmc_process}
Following \cite{goblt2,goblt3,other1,other3}, decoupling the BSDE in (\ref{dis_fiest}) by taking the conditional expectation $\mathbb{E}\left[\cdot|\mathcal{F}_{t_n}\right]$ yields the following semi-discrete scheme:
\begin{equation}\label{disscheme}\left\{
\begin{aligned}
&y_0=y^0,\ p_{N}=g(y_N),\\
&y_{n+1}= y_{n}+b\left(y_{n},u(t_n)\right)\Delta t+\sigma\left(y_{n},u(t_n)\right)\Delta W_{n+1},\ \ n=0,...,N-1,\\
&q_{n}=\frac{1}{\Delta t}\mathbb{E}[\Delta W_{n+1}p_{n+1}|\mathcal{F}_{t_n}],\ n=N-1,...,0,\\
&p_{n}
=\mathbb{E}\big[p_{n+1}+f\left(y_{n},p_{n+1},q_{n},u(t_n),\mu_h\right)\Delta t|\mathcal{F}_{t_n}\big],\ n=N-1,...,0.
\end{aligned}\right.
\end{equation}
A similar analysis of (\ref{J_p_dis}) yields
\begin{equation}\label{disscheme_fu}\left\{
\begin{aligned}
\hat{q}_{n}&=\frac{1}{\Delta t}\mathbb{E}[\Delta W_{n+1}\hat{p}_{n+1}|\mathcal{F}_{t_n}],\ \hat{p}_{N}=g(y_N),\\
\hat{p}_{n}
&=\mathbb{E}\big[\hat{p}_{n+1}+\hat{f}\left(y_{n},\hat{p}_{n+1},\hat{q}_{n},u(t_n)\right)\Delta t|\mathcal{F}_{t_n}\big].
\end{aligned}\right.
\end{equation}
To maintain the relationship between $(p_n,q_n)$ and $(\hat{p}_n,\hat{q}_n)$ as in (\ref{relation}), the following discrete scheme is applied to (\ref{psi_xian}):
\begin{eqnarray}\label{psi_right}
\begin{aligned}
\psi_n&=\psi_{n+1}+\left(1+\psi_{n+1}b_y^{\prime}(t_n)\right)\Delta t,\ \psi_N=0,\ n=N-1,...,1,0.
\end{aligned}
\end{eqnarray}
Then the relationship between $(p_n,q_n)$ and $(\hat{p}_n,\hat{q}_n)$ is deduced as follows:
\begin{equation}\label{markov}
\begin{aligned}
p_n&=\hat{p}_n+\mu_h\psi_n,\ n=N,...,1,0,\quad q_n=\hat{q_n},\ n=N-1,...,0.
\end{aligned}
\end{equation}
We perform constant interpolation for them to get $\psi^{\pi}(t)$, $y_t^{\pi}$, $(\hat{q}_{t}^{\pi},\hat{p}_{t}^{\pi})$ and $(q_{t}^{\pi},p_{t}^{\pi})$, such as $\psi^{\pi}(t)=\sum_{n=0}^{N-1}\psi_{n}\chi_{I_{n}^{N}}(t)$.
In practice, we need to further approximate the conditional expectations in $(\ref{disscheme})$ and $(\ref{disscheme_fu})$, and the LSMC method is used to accomplish this.
As $p_{N}$ and $\hat{p}_N$ are the deterministic function of $y_{N}$ and $(y_{n},\mathcal{F}_{t_n})_{t_n\in\Pi}$ is a Markov chain, which means that
\begin{equation}\label{markov_fu}
\begin{aligned}
&p_N=g(y_{N})=P_N(y_N),\ p_n=P_n(y_n),\ q_n=Q_n(y_n),\\
&\hat{p}_N=g(y_{N})=\hat{P}_N(y_N),\ \hat{p}_n=\hat{P}_n(y_n),\ \hat{q}_n=\hat{Q}_n(y_n),\\
\end{aligned}
\end{equation}
where $n=N-1,...,0$ and $P_{n}(\cdot),Q_{n}(\cdot)$, $\hat{P}_{n}(\cdot),\hat{Q}_{n}(\cdot)$ are the unknown regression functions. From (\ref{markov}) and (\ref{markov_fu}), we have
\begin{equation*}\label{bijin1}
\begin{aligned}
P_n(y_n)=\hat{P}_n(y_n)+\mu_h\psi_n,\ n=N,...,1,0,\ 
Q_n(y_n)=\hat{Q}_n(y_n),\ n=N-1,...,1,0.
\end{aligned}
\end{equation*}
From the elementary property of the conditional projection, for $n=N-1,...,0$, it holds that
\begin{equation}\left\{\label{cond_exp}
\begin{aligned}
{Q}_{n}(y_{n})
&=\mbox{arg}\min\limits_{v_{0,n}(y_{n})}\mathbb{E}\big[|\frac{\Delta W_{n+1}}{\Delta t}{P}_{n+1}(y_{n+1})-v_{0,n}(y_{n})|^2\big],\\
{P}_{n}(y_{n})
&=\mbox{arg}\min\limits_{v_{1,n}(y_{n})}\mathbb{E}\big[|{P}_{n+1}(y_{n+1})
+{f}\big(y_{n},{P}_{n+1}(y_{n+1}),{Q}_{n}(y_{n}),u(t_n),\mu_h\big)\Delta t \\
&\quad-v_{1,n}(y_{n})|^2\big],
\end{aligned}\right.
\end{equation}
where $v_{0,n}(y_n)$ and $v_{1,n}(y_n)$ run over all $\mathcal{F}_{t_n}$  measurable functions with
$\mathbb{E}\left[|v_{0,n}(y_{n})|^2\right]<+\infty$ 
and $\mathbb{E}\left[|v_{1,n}(y_{n})|^2\right]<+\infty$.
In order to solve this infinite-dimensional minimization problem, we first fix the finite dimensional subspaces
\begin{equation*}
\begin{aligned}
\Lambda_{0,n}=\mbox{span}\{\boldsymbol{\eta}^n\}=\mbox{span}\{\eta_1^n,...,\eta_K^n\},\ \Lambda_{1,n}=\mbox{span}\{\boldsymbol{\xi}^n\}=\mbox{span}\{\xi_1^n,...,\xi_{\tilde{K}}^n\},
\end{aligned}
\end{equation*}
for $n=0,1,...,N-1$. In principle, the number of basis functions of $\Lambda_{0,n}$ and $\Lambda_{1,n}$ can be different at each time step $t_n$, which we suppress for simplicity.
Then (\ref{cond_exp}) is approximated to: find $\boldsymbol{\alpha}_{0,n}^{K}\in\mathbb{R}^{K}$ and $\boldsymbol{\alpha}_{1,n}^{\tilde{K}}\in\mathbb{R}^{\tilde{K}}$ for $n=N-1,...,0$ such that
\begin{equation}\label{infinite}\left\{
\begin{aligned}
&{P}_{N}^{K}(y_{N})={P}_{N}(y_{N}),\\
&\boldsymbol{\alpha}_{1,n}^{\tilde{K}}=\mbox{arg}\min\limits_{\boldsymbol{\tilde{\alpha}}_{1,n}^{\tilde{K}}\in\mathbb{R}^{\tilde{K}}}
\mathbb{E}\big[|\frac{\Delta W_{n+1}}{\Delta t}{P}_{n+1}^{K}(y_{n+1})-\boldsymbol{\xi}^n(y_{n})\boldsymbol{\tilde{\alpha}}_{1,n}^{\tilde{K}}|^2\big],\\
&{Q}_{n}^{\tilde{K}}(y_{n})=\boldsymbol{\xi}^n(y_{n})\boldsymbol{\alpha}_{1,n}^{\tilde{K}},\\
&\boldsymbol{\alpha}_{0,n}^{K}=\mbox{arg}\min\limits_{\boldsymbol{\tilde{\alpha}}_{0,n}^{K}\in\mathbb{R}^{K}}
\mathbb{E}\big[|{P}_{n+1}^{K}(y_{n+1})
+{f}\big(y_{n},{P}_{n+1}^{K}(y_{n+1}),{Q}_{n}^{\tilde{K}}(y_{n}),u(t_n),\mu_{h,K}\big)\Delta t \\
&\quad\quad\quad-\boldsymbol{\eta}^n(y_{n})\boldsymbol{\tilde{\alpha}}_{0,n}^{K}|^2\big],\\
&{P}_{n}^{K}(y_{n})=\boldsymbol{\eta}^n(y_{n})\boldsymbol{\alpha}_{0,n}^{K}.\\
\end{aligned}\right.
\end{equation}
The least square approximation $\big(P_{n}^{K}(y_{n}),Q_{n}^{\tilde{K}}(y_{n})\big)$ of $(p_n,q_n)$ is obtained. Similarly,
we can obtain the least square approximation $\big(\hat{P}_{n}^{K}(y_{n}),\hat{Q}_{n}^{\tilde{K}}(y_{n})\big)$ of $(\hat{p}_n,\hat{q}_n)$ by performing (\ref{infinite}).
Then we further use Monte Carlo method to approximate the expectations in (\ref{infinite}).
The $L$ independent copies $(\Delta_\lambda W_{n+1},_{\lambda}{y_{n+1}})_{n=0,...,N-1}^{\lambda=1,...,L}$ of $\Delta W_{n+1}$ and $y_{n+1}$ are given and we arrive at
\begin{equation}\label{MC1}\left\{
\begin{aligned}
&{P}_{N}^{K,L}(y_{N})={P}_{N}^{K}(y_{N}),\\
&\boldsymbol{\alpha}_{1,n}^{\tilde{K},L}=\mbox{arg}\min\limits_{\boldsymbol{\tilde{\alpha}}_{1,n}^{\tilde{K},L}\in\mathbb{R}^{\tilde{K}}}\frac{1}{L}\sum\limits_{\lambda=1}^{L}
\big(\frac{\Delta_{\lambda}W_{n+1}}{\Delta t}{P}_{n+1}^{K,L}(_{\lambda}y_{n+1})-\boldsymbol{\xi}^n(_{\lambda}y_{n})\boldsymbol{\tilde{\alpha}}_{1,n}^{\tilde{K},L}\big)^2,\\
&{Q}_{n}^{\tilde{K},L}(y_{n})=\boldsymbol{\xi}^n(y_{n})\boldsymbol{\alpha}_{1,n}^{\tilde{K},L},\\
&\boldsymbol{\alpha}_{0,n}^{K,L}=\mbox{arg}\min\limits_{\boldsymbol{\tilde{\alpha}}_{0,n}^{K,L}\in\mathbb{R}^{K}}
\frac{1}{L}\sum\limits_{\lambda=1}^{L}\big({P}_{n+1}^{K,L}(_{\lambda}y_{n+1})\\
&\quad\quad\quad+{f}\big(_{\lambda}y_{n},{P}_{n+1}^{K,L}(_{\lambda}y_{n+1}),{Q}_{n}^{\tilde{K},L}(_{\lambda}y_{n}),u(t_n),\mu_{h,KL}\big)\Delta t -\boldsymbol{\eta}^n(_{\lambda}y_{n})\boldsymbol{\tilde{\alpha}}_{0,n}^{K,L}\big)^2,\\
&{P}_{n}^{K,L}(y_{n})=\boldsymbol{\eta}^n(y_{n})\boldsymbol{\alpha}_{0,n}^{K,L}.
\end{aligned}\right.
\end{equation}
If the basic functions are given, the LSMC approximation $\big({P}_{n}^{K,L}(y_{n}),{Q}_{n}^{\tilde{K},L}(y_{n})\big)$ of $({p}_{n},{q}_{n})$ can be obtained. Performing the operation (\ref{MC1}) for 
$\big(\hat{P}_n^{K}(y_n),\hat{Q}_n^{\tilde{K}}(y_n)\big)$ yields 
$\big(\hat{P}_n^{K,L}(y_n),\hat{Q}_n^{\tilde{K},L}(y_n)\big)$, which is the LSMC approximation of $(\hat{p}_n,\hat{q}_n)$.
After the constant interpolations for $\big(P_{n}^{K}(y_n),Q_{n}^{\tilde{K}}(y_n)\big)$ and $\big(P_{n}^{K,L}(y_n),Q_{n}^{\tilde{K},L}(y_n)\big)$, the terms
$(P_{t}^{K,\pi},Q_{t}^{\tilde{K},\pi})$ and $(P_{t}^{K,L,\pi},Q_{t}^{\tilde{K},L,\pi})$ are obtained, such as
$P_{t}^{K,\pi}=\sum_{n=0}^{N-1}{P}_{n}^{K}(y_n)\chi_{I_{n}^{N}}(t)$.
Doing the same operation yields $(\hat{P}_{t}^{K,\pi},\hat{Q}_{t}^{\tilde{K},\pi})$ and $(\hat{P}_{t}^{K,L,\pi},\hat{Q}_{t}^{\tilde{K},L,\pi})$, which are some stochastic processes.

In order to obtain fully computable $J_{N}^{\prime}(\cdot)$, we approximate the expectation with $L$ Monte Carlo simulation paths given in (\ref{MC1}). 
First we define 
$_{\lambda}y_t^{\pi}:=\sum_{n=0}^{N}{_{\lambda}y_n}\chi_{I_{n}^{N}}(t)$, $\lambda=1,...,L,$
which is an $L$-dimensional vector at any $t\in(0,T]$.
Analogously, the Monte Carlo simulations for 
$(\hat{P}_{t}^{K,L,\pi},\hat{Q}_{t}^{\tilde{K},L,\pi})$ 
and $(P_{t}^{K,L,\pi},Q_{t}^{\tilde{K},L,\pi})$ are denoted by
$(_{\lambda}\hat{P}_{t}^{K,L,\pi},_{\lambda}\hat{Q}_{t}^{\tilde{K},L,\pi})$ and $(_{\lambda}P_{t}^{K,L,\pi},_{\lambda}Q_{t}^{\tilde{K},L,\pi})$, such as $_{\lambda}\hat{P}_{t}^{K,L,\pi}:=\sum_{n=0}^{N-1}\hat{P}_{n}^{K,L}(_{\lambda}y_n)\chi_{I_{n}^{N}}(t)$,
which are all $L$-dimensional vectors for a given time $t$. Based on the relation between $(\hat{p}_t,\hat{q}_t)$ and ${J}^{\prime}(u)$ in (\ref{L_mu_dis}), we obtain naturally the following definition of ${J}_N^{'}(u)$:
\begin{equation}\label{fully_lj_mc}
\begin{aligned}
J_{N}^{\prime}(u):=\frac{1}{L}\sum_{\lambda=1}^{L}\big[_{\lambda}\hat{P}_t^{K,L,\pi}
\sum_{n=0}^{N-1}b_{u}^{\prime}(t_n)\chi_{I_{n}^{N}}(t)+_{\lambda}\hat{Q}_{t}^{\tilde{K},L,\pi}\sigma_{u}^{
\prime}(_{\lambda}y_{t}^{\pi},u)\big]+j^{\prime}(u).
\end{aligned}
\end{equation}
Then the fully discrete iterative scheme based on LSMC method is obtained as follows for a given initial value $u^{0,N}\in U_{N}$ and $i=0,1,2,...$:
\begin{equation}\label{fully_ita_lisan2}
\begin{aligned}
u^{i+1,N}
=\mathbb{P}_N\big(u^{i,N}-\rho J_N^{\prime}(u^{i,N})\big)
=u^{i,N}-\rho J_N^{\prime}(u^{i,N})
-\rho\mu_{h,KL}^{i,N}\sum_{n=0}^{N-1}\psi_{n}b_{u}^{\prime}(t_n)\chi_{I_{n}^{N}}(t).
\end{aligned}
\end{equation}
The following gradient projection algorithm based on LSMC discretization is given:

\begin{algorithm}[!htbp]
 \SetKwData{Left}{left}\SetKwData{This}{this}\SetKwData{Up}{up} \SetKwFunction{Union}{Union}\SetKwFunction{FindCompress}{FindCompress} \SetKwInOut{Input}{Require}\SetKwInOut{Output}{Ensure}
\Input{Constant $\rho>0$, initial value of the control $u^{0,N}\in U_N$, error tolerance $\varepsilon_0$ and $L$ independent copies $\{\Delta_{\lambda}W_{n+1}\}_{n=0,...,N-1}^{\lambda=1,2,...,L}$ of $\{\Delta W_{n+1}\}_{n=0,...,N-1}$.} 
\Output{}
\BlankLine 

\emph{Set $error>\varepsilon_0$, $i=1$}\;
\While {$error>\varepsilon_0$}
{
Solving the state equation by $u^{i-1,N}$ and $\{\Delta_{\lambda}W_{n+1}\}_{n=0,...,N-1}^{\lambda=1,2,...,L}$ to get $_{\lambda}y_{t}^{\pi}$\;
Solving the ODE in (\ref{psi_xian}) and BSDE in (\ref{J_p_dis}) by performing (\ref{psi_right}) and (\ref{MC1}) to get $(_{\lambda}\hat{P}_{t}^{K,L,\pi},_{\lambda}\hat{Q}_{t}^{\tilde{K},L,\pi})$ and $\psi_{n}$\;
Computing $\hat{u}^{i,N}=u^{i-1,N}-\mathcal{P}\big(\rho J_{N}^{\prime}(u^{i-1,N})\big)$ by using the definition in (\ref{fully_lj_mc})\;
Solving the state equation by $\hat{u}^{i,N}$ and the forward Euler method to get $\{_{\lambda}\hat{y}_{n+1}^{i,N}\}_{n=0,1,...,N-1}^{\lambda=1,2,...,L}$\;
Computing $\hat{I}_i:=\mathbb{E}\left[\int_{0}^{T}\hat{y}_{t}^{i,N}dt\right]$ numerically by using $\{_{\lambda}\hat{y}_{n+1}^{i,N}\}_{n=0,1,...,N-1}^{\lambda=1,2,...,L}$ and trapezoidal rule\;
Solving (\ref{varphi_dis}) by the forward Euler method to get $\tilde{\varphi}_{n}, n=0,1,...,N$\;
Computing $\tilde{I}_i:=\int_{0}^{T}\tilde{\varphi}(t)dt$ numerically by using $\tilde{\varphi}_{n}$ and trapezoidal rule\;
Computing $\mu_{h,KL}^{i-1,N}=\frac{\max\{\hat{I}_i-\delta,0\}}{\rho \tilde{I}_i}$\;
Updating $u^{i,N}$ by: $u^{i,N}=\hat{u}^{i,N}-\rho \mu_{h,KL}^{i-1,N}\sum_{n=0}^{N-1}\psi_nb_{u}^{\prime}(t_n)\chi_{I_{n}^{N}}(t)$\;
Computing $error=\|u^{i,N}-u^{i-1,N}\|_{\infty}$, and let $i=i+1$.
}
\caption{Gradient projection algorithm with LSMC method}
\label{lsmc_algorithm_xian} 
\end{algorithm}

\subsection{Error estimates}\label{error_grid}
In this subsection, the error estimates for control and multiplier are derived for the fully discrete iterative scheme (\ref{fully_ita_lisan2}). Before that, the following assumption and estimate for discrete scheme (\ref{psi_right}) are given:
\begin{Assumption}\label{jiashe2}
We assume $0<c_b\leq|b_{u}^{\prime}|$ and the derivatives of deterministic functions $b_y^{\prime}(t)$, $b_u^{\prime}(t)$ are bounded, i.e., there exist a positive constant $C_t$ such that
$|b_{y,t}^{\prime\prime}|+|b_{u,t}^{\prime\prime}|\leq C_t.$
\end{Assumption}

\begin{Theorem}\label{psi_theorem}
Under Assumptions \ref{jiashe1} and \ref{jiashe2}, for the discrete scheme (\ref{psi_right}) of $\psi(t)$ and sufficiently small $\Delta t$, it holds that
\begin{eqnarray*}
\begin{aligned}
\max_{0\leq n\leq N}|\psi_n-\psi(t_n)|\leq C\Delta t, 
\end{aligned}
\end{eqnarray*}
where $C$ is a positive constant independent of $N$.
\end{Theorem}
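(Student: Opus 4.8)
The plan is to read \eqref{psi_right} as an explicit (backward‑in‑time) Euler scheme for the linear ODE \eqref{psi_xian} and to run the standard local‑truncation‑error plus discrete Gr\"onwall argument. First I would record the regularity of the exact solution. Writing \eqref{psi_xian} as $\psi'(t)=-(1+\psi(t)b_y'(t))$ with $\psi(T)=0$, Assumption~\ref{jiashe1} ($|b_y'|\le C_b$) gives existence, uniqueness and $\|\psi\|_\infty\le C$; then $\|\psi'\|_\infty\le 1+C_b\|\psi\|_\infty\le C$, and differentiating once more and using Assumption~\ref{jiashe2} ($|b_{y,t}''|\le C_t$) gives $\|\psi''\|_\infty\le C$, where $C$ depends only on $T,C_b,C_t$. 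In particular $s\mapsto \psi(s)b_y'(s)$ is Lipschitz on $[0,T]$ with a constant independent of $N$; this is exactly where Assumption~\ref{jiashe2} is used.

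Next, set $e_n:=\psi_n-\psi(t_n)$, so $e_N=0$. Using $\psi(t_n)=\psi(t_{n+1})+\int_{t_n}^{t_{n+1}}\big(1+\psi(s)b_y'(s)\big)\,\mathrm ds$ and subtracting \eqref{psi_right}, the ``$1+$'' terms cancel and one obtains the error recursion
\[
e_n=\big(1+b_y'(t_n)\Delta t\big)\,e_{n+1}-R_n,\qquad
R_n:=\int_{t_n}^{t_{n+1}}\big[\psi(s)b_y'(s)-\psi(t_{n+1})b_y'(t_n)\big]\,\mathrm ds .
\]
The consistency bound $|R_n|\le C\Delta t^2$ follows because on $[t_n,t_{n+1}]$ both $|\psi(s)-\psi(t_{n+1})|\le\|\psi'\|_\infty\Delta t$ and $|b_y'(s)-b_y'(t_n)|\le C_t\Delta t$, so the integrand is $O(\Delta t)$ over an interval of length $\Delta t$. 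Hence $|e_n|\le(1+C_b\Delta t)|e_{n+1}|+C\Delta t^2$.

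Finally I would close the estimate with the discrete Gr\"onwall inequality: iterating the last bound backward from $e_N=0$ gives
\[
|e_n|\le C\Delta t^2\sum_{k=0}^{N-1-n}(1+C_b\Delta t)^k
\le C\Delta t^2\,\frac{(1+C_b\Delta t)^N}{C_b\Delta t}
\le \frac{C\,e^{C_bT}}{C_b}\,\Delta t ,
\]
using $N\Delta t=T$ and $(1+C_b\Delta t)^N\le e^{C_bT}$, so $\max_{0\le n\le N}|\psi_n-\psi(t_n)|\le C\Delta t$ with $C$ depending on $T,C_b,C_t$ but not on $N$. I do not expect any genuine obstacle here: the scheme is a one‑step method for a scalar linear ODE, the smallness of $\Delta t$ is used only to keep the amplification factor benign (and is in fact not strictly needed since $(1+C_b\Delta t)^N\le e^{C_bT}$ always holds), and the only point requiring care is the bookkeeping that keeps the final constant independent of $N$. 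If one preferred a cleaner recursion one could instead compare with the evaluation $b_y'(t_{n+1})$, but the scheme as stated uses $b_y'(t_n)$ and the same Taylor/Gr\"onwall argument applies verbatim.
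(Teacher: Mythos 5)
Your proof is correct, and it takes a somewhat different route from the paper's. The paper does not compare $\psi_n$ with $\psi(t_n)$ directly: it first introduces the auxiliary scheme $\tilde{\psi}_n=\tilde{\psi}_{n+1}+\big(1+\tilde{\psi}_{n+1}b_y^{\prime}(t_{n+1})\big)\Delta t$ (i.e.\ the Euler scheme with the coefficient evaluated at $t_{n+1}$), cites the standard first-order estimate $\max_n|\tilde{\psi}_n-\psi(t_n)|\le C\Delta t$ for that scheme, and then bounds $|\tilde{\psi}_n-\psi_n|$ by a recursion whose per-step perturbation is $\tilde{\psi}_{n+1}b_{y,t}^{\prime\prime}(t_n+\theta\Delta t)(\Delta t)^2$, finishing with the triangle inequality. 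Your argument folds both pieces into a single local-truncation-error computation: the residual $R_n=\int_{t_n}^{t_{n+1}}[\psi(s)b_y^{\prime}(s)-\psi(t_{n+1})b_y^{\prime}(t_n)]\,ds$ simultaneously absorbs the quadrature error and the shift of the evaluation point of $b_y^{\prime}$, and the bound $|R_n|\le C(\Delta t)^2$ uses exactly the same ingredients (the a priori bounds on $\psi$, $\psi^{\prime}$ from Assumption 2.1 and the bound on $b_{y,t}^{\prime\prime}$ from Assumption 4.1) that the paper uses in its two separate steps. Your version is more self-contained, since it does not appeal to a ``standard estimate'' as a black box, and your remark that the smallness of $\Delta t$ is not genuinely needed (because $(1+C_b\Delta t)^N\le e^{C_bT}$ unconditionally) is accurate; the paper's statement of the hypothesis is simply conservative. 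The only cosmetic point is that the bound $\|\psi^{\prime\prime}\|_\infty\le C$ you derive is not actually used anywhere in your argument — Lipschitz continuity of $\psi$ and of $b_y^{\prime}$ suffices — but this does not affect correctness.
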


\begin{proof}
The explicit Euler discrete scheme is applied to (\ref{psi_xian}), i.e.,
\begin{eqnarray}\label{yinshi}
\begin{aligned}
\tilde{\psi}_n=\tilde{\psi}_{n+1}+\left(1+\tilde{\psi}_{n+1}b_y^{\prime}(t_{n+1})\right)\Delta t,\ \tilde{\psi}_N=0,\ n=N-1,...,1,0.
\end{aligned}
\end{eqnarray}
We have the standard estimate as follows for sufficiently small $\Delta t$:
\begin{eqnarray}\label{psi_xianshi}
\begin{aligned}
\max_{0\leq n\leq N-1}|\tilde{\psi}_n-\psi(t_n)|\leq C\Delta t.
\end{aligned}
\end{eqnarray}
By differentiating (\ref{yinshi}) from (\ref{psi_right}) and using Taylor expansion, it holds that
\begin{eqnarray}\label{psi_right_esti}
\begin{aligned}
|\tilde{\psi}_n-\psi_n|
&=|\tilde{\psi}_{n+1}-\psi_{n+1}+\tilde{\psi}_{n+1}b_{y}^{\prime}(t_{n+1})\Delta t-\psi_{n+1}b_{y}^{\prime}(t_n)\Delta t|\\
&=|\tilde{\psi}_{n+1}-\psi_{n+1}+(\tilde{\psi}_{n+1}-\psi_{n+1})b_{y}^{\prime}(t_n)\Delta t+
\tilde{\psi}_{n+1}b_{y,t}^{\prime\prime}(t_n+\theta\Delta t)(\Delta t)^2|\\
&\leq(1+C_b\Delta t)|\tilde{\psi}_{n+1}-\psi_{n+1}|+C_t(\Delta t)^2\big(|\psi(t_{n+1})|+C\Delta t \big)\\
&\leq(1+C_b\Delta t)|\tilde{\psi}_{n+1}-\psi_{n+1}|+C_t(\Delta t)^2\big(\frac{e^{C_bT}-1}{C_b}+C\Delta t\big),
\end{aligned}
\end{eqnarray}
where $\theta\in(0,1)$. Employing the recursion formula above yields
\begin{eqnarray*}
\begin{aligned}
|\tilde{\psi}_n-\psi_n|&\leq(1+C_b\Delta t)^{N-n-1}|\tilde{\psi}_{N-1}-\psi_{N-1}|\\
&\quad+\sum\limits_{j=0}^{N-n-2}(1+C_b\Delta t)^{j}C_t(\Delta t)^2\big(\frac{e^{C_bT}-1}{C_b}+C\Delta t\big).
\end{aligned}
\end{eqnarray*}
By the termination condition $\tilde{\psi}_N=\psi_N=0$ and (\ref{psi_right_esti}), it holds that
$|\tilde{\psi}_{N-1}-\psi_{N-1}|=0$,
which implies
\begin{eqnarray}\label{tildepsi}
\begin{aligned}
|\tilde{\psi}_n-\psi_n|&\leq\sum\limits_{j=0}^{N-n-2}(1+C_b\Delta t)^{j}C_t(\Delta t)^2\big(\frac{e^{C_bT}-1}{C_b}+C\Delta t\big)\\
&=\big((1+C_b\Delta t)^{N-n-1}-1\big)C_t\Delta t\big(\frac{e^{C_bT}-1}{C_b^2}+\frac{C\Delta t}{C_b}\big)\\
&\leq\big((1+\frac{C_b}{N})^{N}-1\big)C_t\Delta t\big(\frac{e^{C_bT}-1}{C_b^2}+\frac{C\Delta t}{C_b}\big)\\
&= (e^{C_b}-1)C_t\big(\frac{e^{C_bT}-1}{C_b^2}+\frac{C\Delta t}{C_b}\big)\Delta t,\quad N\rightarrow+\infty.
\end{aligned}
\end{eqnarray}
Combining (\ref{psi_xianshi}) and (\ref{tildepsi}) yields
\begin{eqnarray*}
\begin{aligned}
|\psi_n-\psi(t_n)|&\leq|\tilde{\psi}_n-\psi(t_n)|+(e^{C_b}-1)C_t\big(\frac{e^{C_bT}-1}{C_b^2}+\frac{C\Delta t}{C_b}\big)\Delta t\\
&\leq \Big(C+(e^{C_b}-1)C_t\big(\frac{e^{C_bT}-1}{C_b^2}+\frac{C\Delta t}{C_b}\big)\Big)\Delta t,\quad N\rightarrow+\infty.
\end{aligned}
\end{eqnarray*}
\end{proof}

Let $\varepsilon_N=\sup_i\|J^{\prime}(u^{i,N})-J_{N}^{\prime}(u^{i,N})\|$, where $J^{\prime}(\cdot)$ and $J_N^{\prime}(\cdot)$ are defined in (\ref{L_mu_dis}) and (\ref{fully_lj_mc}), then we have the following estimate for $\|u-u^{i+1,N}\|$.
\begin{Theorem}\label{u_uh}
Under the Assumptions \ref{jiashe1} and \ref{jiashe2}, let $u^{*}\in U(\delta)$ and $u^{i+1,N}$ be the solutions solved by SOCP (\ref{reduced})-(\ref{con_U}) and iterative scheme (\ref{fully_ita_lisan2}). We assume $u^{*},J^{\prime}(u^{*})$ are Lipschitz continuous functions and
$J^{\prime}(\cdot)$ is Lipschitz and uniformly monotone, i.e., there are positive constants $C_1,C_2$ such that
\begin{eqnarray*}
\begin{aligned}
\|J^{\prime}(u)-J^{\prime}(v)\|&\leq C_1\|u-v\|,\quad\forall u,v\in L^2([0,T];\mathbb{R}),\\
\big(J^{\prime}(u)-J^{\prime}(v),u-v\big)&\geq C_2\|u-v\|^2,\quad\forall u,v\in L^2([0,T];\mathbb{R}).
\end{aligned}
\end{eqnarray*}
Then when $\rho$ is chosen such that $0<1-2C_2\rho+(1+2C_1^2)\rho^2<1$, the following estimate holds:
\begin{eqnarray*}
\begin{aligned}
\|u^*-u^{i+1,N}\|&\leq C(\Delta t+\varepsilon_N),\quad i\rightarrow+\infty, \\
\end{aligned}
\end{eqnarray*}
where $C$ is a positive constant independent of $N$.
\end{Theorem}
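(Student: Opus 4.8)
The plan is to derive a contraction-type recursion for the error $\|u^{*}-u^{i+1,N}\|$ and then pass to the limit $i\to+\infty$. The starting point is the fixed-point characterization $u^{*}=\mathbb{P}\big(u^{*}-\rho J^{\prime}(u^{*})\big)$ from \eqref{xian_ita} together with the fully discrete iterate $u^{i+1,N}=\mathbb{P}_N\big(u^{i,N}-\rho J_N^{\prime}(u^{i,N})\big)$ from \eqref{fully_ita_lisan2}. First I would insert the exact discrete projection $\mathbb{P}_N\big(u^{i,N}-\rho J^{\prime}(u^{i,N})\big)$ as an intermediate term and split
\begin{eqnarray*}
\begin{aligned}
\|u^{*}-u^{i+1,N}\|&\leq\|\mathbb{P}u^{*}-\mathbb{P}_N(u^{i,N}-\rho J^{\prime}(u^{i,N}))\|\\
&\quad+\|\mathbb{P}_N(u^{i,N}-\rho J^{\prime}(u^{i,N}))-\mathbb{P}_N(u^{i,N}-\rho J_N^{\prime}(u^{i,N}))\|.
\end{aligned}
\end{eqnarray*}
The second term is controlled by the nonexpansiveness of $\mathbb{P}_N$ (Corollary \ref{xingzhi_P_dis}) by $\rho\|J^{\prime}(u^{i,N})-J_N^{\prime}(u^{i,N})\|\leq\rho\,\varepsilon_N$, which is where the fully discrete error $\varepsilon_N$ enters. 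For the first term I would further compare $\mathbb{P}u^{*}$ with $\mathbb{P}_N u^{*}$ (a control-discretization error, handled via $U_N$ approximation of the Lipschitz functions $u^{*},J^{\prime}(u^{*})$, giving an $O(\Delta t)$ bound using standard piecewise-constant approximation estimates and Theorem \ref{psi_theorem} for the multiplier part) and then use nonexpansiveness again to reduce to comparing $u^{*}-\rho J^{\prime}(u^{*})$ with $u^{i,N}-\rho J^{\prime}(u^{i,N})$.

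The core estimate is the standard gradient-step contraction: using Lipschitz continuity and uniform monotonicity of $J^{\prime}(\cdot)$,
\begin{eqnarray*}
\begin{aligned}
\|(u^{*}-\rho J^{\prime}(u^{*}))-(u^{i,N}-\rho J^{\prime}(u^{i,N}))\|^2
&=\|u^{*}-u^{i,N}\|^2-2\rho\big(J^{\prime}(u^{*})-J^{\prime}(u^{i,N}),u^{*}-u^{i,N}\big)\\
&\quad+\rho^2\|J^{\prime}(u^{*})-J^{\prime}(u^{i,N})\|^2\\
&\leq\big(1-2C_2\rho+C_1^2\rho^2\big)\|u^{*}-u^{i,N}\|^2.
\end{aligned}
\end{eqnarray*}
The slightly larger constant $1-2C_2\rho+(1+2C_1^2)\rho^2$ in the hypothesis leaves room to absorb cross terms that arise when the $O(\Delta t+\varepsilon_N)$ perturbations are combined with the $\|u^{*}-u^{i,N}\|$ term via Young's inequality. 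Denoting $\theta:=\sqrt{1-2C_2\rho+(1+2C_1^2)\rho^2}\in(0,1)$, I would assemble a recursion of the form
$\|u^{*}-u^{i+1,N}\|\leq\theta\,\|u^{*}-u^{i,N}\|+C(\Delta t+\varepsilon_N)$,
and iterating gives $\|u^{*}-u^{i+1,N}\|\leq\theta^{i+1}\|u^{*}-u^{0,N}\|+\frac{C}{1-\theta}(\Delta t+\varepsilon_N)$; letting $i\to+\infty$ yields the claimed bound $\|u^{*}-u^{i+1,N}\|\leq C(\Delta t+\varepsilon_N)$.

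The main obstacle I anticipate is the bookkeeping for the first term, specifically separating cleanly the control-discretization error $\|\mathbb{P}u^{*}-\mathbb{P}_N u^{*}\|$ from the iteration error, because $\mathbb{P}$ and $\mathbb{P}_N$ are not ordinary metric projections (they are the explicit "gradient-step-plus-constructed-multiplier" maps of \eqref{pro_xian} and \eqref{define_P_dis}), so I cannot quote generic projection-stability lemmas. I would instead use their explicit forms: the difference reduces to (i) the $L^2$-projection error of $u^{*}-\rho J^{\prime}(u^{*})$ onto $U_N$, which is $O(\Delta t)$ by piecewise-constant approximation of Lipschitz functions, and (ii) the difference of the multiplier/correction terms $\rho\mu^{*}\psi(t)b_u^{\prime}(t)$ versus $\rho\mu^{*,N}\psi^{\pi}(t)b_u^{\prime}(t)$, bounded using Theorem \ref{psi_theorem} and the continuous dependence of the explicitly-defined multipliers \eqref{def_mu}, \eqref{def_mu_N} on their data. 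A secondary technical point is justifying that the hypothesized uniform monotonicity survives under the discretization so that the recursion constant remains $<1$ uniformly in $N$; this follows since the monotonicity hypothesis is imposed directly on $J^{\prime}(\cdot)$ on $L^2([0,T];\mathbb{R})$, independently of $N$.
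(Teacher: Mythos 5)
Your proposal is correct in substance but organizes the argument differently from the paper. The paper first inserts the semi-discrete optimizer $u^{*,N}$ of problem (\ref{lisan_ocp}) and splits $\|u^*-u^{i+1,N}\|\leq\|u^*-u^{*,N}\|+\|u^{*,N}-u^{i+1,N}\|$: the second piece is handled by a contraction recursion at fixed $N$ (using that $u^{*,N}$ is a fixed point of $\mathbb{P}_N(\cdot-\rho J'(\cdot))$, with $\varepsilon_N$ entering through $J'_N$ versus $J'$), while the first piece is a genuine discretization estimate $\|u^*-u^{*,N}\|\leq C\Delta t$ obtained from $\|u^*-u^{*,N}\|\leq\frac{1}{1-\alpha}\|u^*-\mathbb{P}_N(u^*-\rho J'(u^*))\|$ and a careful comparison of the explicit maps $\mathbb{P}$ and $\mathbb{P}_N$. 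You instead fold everything into a single one-step recursion $\|u^*-u^{i+1,N}\|\leq\theta\|u^*-u^{i,N}\|+C(\Delta t+\varepsilon_N)$, with the $\mathbb{P}$-versus-$\mathbb{P}_N$ mismatch entering as a constant perturbation at each step; this is legitimate and even slightly cleaner, since your direct comparison of gradient steps with the same $J'$ needs only the smaller constant $1-2C_2\rho+C_1^2\rho^2$ (which is automatically in $[0,1)$ under the stated hypothesis, since $C_2\leq C_1$ forces this quantity to be nonnegative). The paper's route has the side benefit of isolating the reusable estimate $\|u^*-u^{*,N}\|\leq C\Delta t$. Either way, the real work — which you correctly identify as the main obstacle — is the bound $\|\mathbb{P}(u^*-\rho J'(u^*))-\mathbb{P}_N(u^*-\rho J'(u^*))\|\leq C\Delta t$; your sketch of it matches the paper's: the $L^2$-projection error onto $U_N$ of the Lipschitz function $u^*-\rho J'(u^*)$, plus the difference of the correction terms, which the paper reduces to $\|\psi b_u'-\mathcal{P}(\psi b_u')\|=O(\Delta t)$ together with $\|\varphi-\tilde\varphi\|\leq C\|\psi b_u'-\mathcal{P}(\psi b_u')\|$ from the ODEs (\ref{varphi_con}) and (\ref{varphi_dis}); to make your step (ii) rigorous you would also need the lower bound on $\int_0^T\tilde\varphi\,dt$ (available from (\ref{pro_con_3}) and the positivity of $\psi$ in (\ref{bound_psi})) so that the constructed multipliers depend stably on their data, but this is a detail the paper handles implicitly as well.
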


\begin{proof}
By the triangle inequality, we have
\begin{eqnarray*}
\begin{aligned}
\|u^*-u^{i+1,N}\|&=\|u^*-u^{*,N}+u^{*,N}-u^{i+1,N}\|
\leq\|u^*-u^{*,N}\|+\|u^{*,N}-u^{i+1,N}\|.
\end{aligned}
\end{eqnarray*}
From Corollary \ref{xingzhi_P_dis}, it holds that 
\begin{eqnarray*}
\begin{aligned}
\|u^{*,N}-u^{i+1,N}\|^2
&\leq\left\|u^{*,N}-u^{i,N}-\rho\left(J^{\prime}(u^{*,N})-J_{N}^{\prime}(u^{i,N})\right)\right\|^2\\
&=\|u^{*,N}-u^{i,N}\|^2-2\rho\left(u^{*,N}-u^{i,N}, J^{\prime}(u^{*,N})-J_{N}^{\prime}(u^{i,N})  \right)\\
&\quad+\rho^2\|J^{\prime}(u^{*,N})-J_{N}^{\prime}(u^{i,N}) \|^2.
\end{aligned}
\end{eqnarray*}
By the Lipschitz condition and the monotonicity property of $J^{\prime}(\cdot)$, we have
\begin{eqnarray*}
\begin{aligned}
&-2\rho\left(u^{*,N}-u^{i,N},J^{\prime}(u^{*,N})-J_N^{\prime}(u^{i,N})\right)\\
&=-2\rho\left(u^{*,N}-u^{i,N},J^{\prime}(u^{*,N})-J^{\prime}(u^{i,N})+J^{\prime}(u^{i,N})- J_N^{\prime}(u^{i,N})\right)\\
&\leq-2\rho C_2\|u^{*,N}-u^{i,N}\|^2+\rho^2\|u^{*,N}-u^{i,N}\|^2+\varepsilon_N^2
\end{aligned}
\end{eqnarray*}
and
\begin{eqnarray*}
\begin{aligned}
\rho^2\|J^{\prime}(u^{*,N})-J_N^{\prime}(u^{i,N})\|^2
&=\rho^2\|J^{\prime}(u^{*,N})-J^{\prime}(u^{i,N})+J^{\prime}(u^{i,N})-J_N^{\prime}(u^{i,N})\|^2\\
&\leq 2C_1^2\rho^2\|u^{*,N}-u^{i,N}\|^2+2\rho^2\varepsilon_N^2,
\end{aligned}
\end{eqnarray*}
which implies $$\|u^{*,N}-u^{i+1,N}\|^2\leq\left(1-2C_2\rho+(1+2C_1^2)\rho^2\right)\|u^{*,N}-u^{i,N}\|^2+(1+2\rho^2)\varepsilon_N^2.$$
It can be found that $1-2C_2\rho+(1+2C_1^2)\rho^2$ is a quadratic function with positive quadratic coefficient and axis of symmetry, and it is equal to $1$ at point $0$, which implies there exists a $\rho$ such that $0<1-2C_2\rho+(1+2C_1^2)\rho^2<1$.
Then let $\alpha=\sqrt{1-2C_2\rho+(1+2C_1^2)\rho^2}$, we have $0<\alpha<1$ and
\begin{eqnarray*}
\begin{aligned}
\|u^{*,N}-u^{i+1,N}\|\leq\alpha\|u^{*,N}-u^{i,N}\|+\sqrt{1+2\rho^2}\varepsilon_N
\leq\alpha^{i+1}\|u^{*,N}-u^{0,N}\|+\sqrt{1+2\rho^2}\frac{1-\alpha^{i+1}}{1-\alpha}\varepsilon_N.
\end{aligned}
\end{eqnarray*}
When $i\rightarrow+\infty$, we arrive at
\begin{eqnarray}\label{esta_1}
\begin{aligned}
\|u^{*,N}-u^{i+1,N}\|\leq\Delta t\|u^{*,N}-u^{0,N}\|+\frac{\sqrt{1+2\rho^2}}{1-\alpha}\varepsilon_N,\quad i\rightarrow+\infty.
\end{aligned}
\end{eqnarray}
For the estimate of $\|u^{*}-u^{*,N}\|$, we have 
\begin{eqnarray*}
\begin{aligned}
\|u^*-u^{*,N}\|&\leq\big\|u^*-\mathbb{P}_N\big(u^*-\rho J^{\prime}(u^*)\big)\big\|+\big\|\mathbb{P}_N\big(u^*-\rho J^{\prime}(u^*)\big)-u^{*,N}\big\|.
\end{aligned}
\end{eqnarray*}
By Corollary \ref{xingzhi_P_dis}, the Lipschitz condition and the monotonicity property of $J^{\prime}(\cdot)$, we arrive at
\begin{eqnarray}
\begin{aligned}\label{esta_3}
&\big\|\mathbb{P}_N\big(u^*-\rho J^{\prime}(u^*)\big)-u^{*,N}\big\|^2 \\
&\leq\big\|u^*-u^{*,N}-\rho\big(J^{\prime}(u^*)-
J^{\prime}(u^{*,N})\big)\big\|^2 \\
&=\|u^*-u^{*,N}\|^2-2\rho\big(u^*-u^{*,N},J^{\prime}(u^*)-J^{\prime}(u^{*,N})\big)
+\rho^2\|J^{\prime}(u^*)-J^{\prime}(u^{*,N})\|^2 \\
&\leq\|u^*-u^{*,N}\|^2-2C_2\rho\|u^*-u^{*,N}\|^2+C_1^2\rho^2\|u^*-u^{*,N}\|^2 \\
&\leq(1-2C_2\rho+C_1^2\rho^2)\|u^*-u^{*,N}\|^2 \\
&\leq \alpha^2\|u^*-u^{*,N}\|^2.
\end{aligned}
\end{eqnarray}
Then from the above estimate and Corollary \ref{xingzhi_P_con} we have
\begin{eqnarray}\label{esta_2}
\begin{aligned}
&\|u^*-u^{*,N}\|\\
&\leq\frac{1}{1-\alpha}\big\|u^*-\mathbb{P}_N\big(u^*-\rho J^{\prime}(u^*)\big)\big\|
=\frac{1}{1-\alpha}\big\|\mathbb{P}\big(u^*-\rho J^{\prime}(u^*)\big)-\mathbb{P}_N\big(u^*-\rho J^{\prime}(u^*)\big)\big\|\\
&\leq\frac{1}{1-\alpha}
\left(\big\|\mathbb{P}\big(u^*-\rho J^{\prime}(u^*)\big)-\mathbb{P}\big(\mathcal{P}\big(u^*-\rho J^{\prime}(u^*)\big)\big)\big\| \right.\\
&\quad\left.+\big\|\mathbb{P}\big(\mathcal{P}\big(u^*-\rho J^{\prime}(u^*)\big)\big)-\mathbb{P}_N\big(u^*-\rho J^{\prime}(u^*)\big)\big\|\right)\\
&\leq\frac{1}{1-\alpha}\left(\big\|u^*-\rho J^{\prime}(u^*)- \mathcal{P}\big(u^*-\rho J^{\prime}(u^*)\big)\big\|
+\big\|\mathbb{P}\big(\mathcal{P}\big(u^*-\rho J^{\prime}(u^*)\big)\big)-\mathbb{P}_N\big(u^*-\rho J^{\prime}(u^*)\big)\big\|\right).
\end{aligned}
\end{eqnarray}
For convenience, let $\hat{u}^{*}:=u^*-\rho J^{\prime}(u^*)$ and the definitions of $\mathbb{P}(\mathcal{P}\hat{u}^*)$ and $\mathbb{P}_N(\hat{u}^*)$ are given by
\begin{eqnarray*}
\begin{aligned}
&\mathbb{P}(\mathcal{P}\hat{u}^*):=\mathcal{P}\hat{u}^*-\rho\hat{\mu}^{*}\psi(t)b_{u}^{\prime}(t)
=\mathcal{P}\hat{u}^*-\rho\frac{\max\left\{\mathbb{E}\left[\int_{0}^{T}\hat{y}_t^{*}dt\right]-\delta,0 \right\}}{\rho\int_{0}^{T}\varphi(t)dt}\psi(t)b_{u}^{\prime}(t),\\
&\mathbb{P}_N(\hat{u}^*):=\mathcal{P}\hat{u}^*-\rho\tilde{\mu}^{*,N}\mathcal{P}\big(\psi(t)b_{u}^{\prime}(t)\big)
=\mathcal{P}\hat{u}^*-\rho \frac{\max\left\{\mathbb{E}\left[\int_{0}^{T}\hat{y}_t^{*}dt\right]-\delta,0 \right\}}{\rho\int_{0}^{T}\tilde{\varphi}(t)dt}\mathcal{P}\big(\psi(t)b_{u}^{\prime}(t)\big),
\end{aligned}
\end{eqnarray*}
where $\hat{y}_t^{*}$ is the state variable solved by $\mathcal{P}\hat{u}^*$ as the control variable and $\psi(t),\varphi(t),\tilde{\varphi}(t)$ are the solutions of the equations (\ref{psi_xian}), (\ref{varphi_con}) and (\ref{varphi_dis}), respectively. By the proofs similar to Theorems \ref{mu_exist} and \ref{muh_exist}, it can be proved that 
\begin{eqnarray*}
\begin{aligned}
\big(\mathcal{P}\hat{u}^{*}-\mathbb{P}(\mathcal{P}\hat{u}^*),v-\mathbb{P}(\mathcal{P}\hat{u}^*)\big)\leq0,\quad 
\big(\hat{u}^{*}-\mathbb{P}_N(\hat{u}^*),w-\mathbb{P}_N(\hat{u}^*)\big)\leq0,
\end{aligned}
\end{eqnarray*}
for any $v\in U(\delta)$ and $w\in U^{N}(\delta)$. 

For the deterministic ODE in (\ref{psi_xian}), one can show that
\begin{eqnarray}\label{bound_psi}
\begin{aligned}
\frac{1-e^{-C_b(T-t)}}{C_b}\leq\psi(t)=\int_{t}^{T}e^{\int_{t}^{s}b_y^{\prime}(x)dx}ds\leq \frac{e^{C_b(T-t)}-1}{C_b},\quad t\in[0,T].
\end{aligned}
\end{eqnarray}
Further, by (\ref{bound_psi}) and $|b_{u}^{\prime}(t)|\leq C_b$, we have the following estimate for $\|\mathbb{P}(\mathcal{P}\hat{u}^*)-\mathbb{P}_N(\hat{u}^{*})\|$:
\begin{eqnarray}\label{est_p_pn}
\begin{aligned}
&\|\mathbb{P}(\mathcal{P}\hat{u}^*)-\mathbb{P}_N(\hat{u}^{*})\| 
=\big\|\rho\tilde{u}^{*}\mathcal{P}\big(\psi(t)b_{u}^{\prime}(t)\big)-\rho\hat{\mu}^{*}\psi(t)b_{u}^{\prime}(t)\big\| \\
&\leq\rho\tilde{u}^{*}\big\|\mathcal{P}\big(\psi(t)b_{u}^{\prime}(t)\big)-\psi(t)b_{u}^{\prime}(t)\big\|+
(\rho\tilde{u}^{*}-\rho\hat{u}^{*})\|\psi(t)b_{u}^{\prime}(t)\|\\
&\leq\rho\tilde{u}^{*}\big\|\mathcal{P}\big(\psi(t)b_{u}^{\prime}(t)\big)-\psi(t)b_{u}^{\prime}(t)\big\|+
(e^{C_bT}-1)T\frac{\max\left\{\mathbb{E}\left[\int_{0}^{T}\hat{y}_t^{*}dt\right]-\delta,0 \right\}}{\int_{0}^{T}\tilde{\varphi}(t)dt\int_{0}^{T}\varphi(t)dt}\|\varphi(t)-\tilde{\varphi}(t)\|.
\end{aligned}
\end{eqnarray}
Since (\ref{varphi_con}) and (\ref{varphi_dis}) are deterministic ODEs, which implies that we can solve them analytically and have the following estimate:
\begin{align*}
\|\varphi(t)-\tilde{\varphi}(t)\|
&=\left(\int_{0}^{T}\left(\int_{0}^{t}b_{u}^{\prime}(s)\big(b_{u}^{\prime}(s)\psi(s)-\mathcal{P}\big(b_{u}^{\prime}(s)\psi(s)\big)\big)e^{
\int_{s}^{t}b_{y}^{\prime}(x)dx}  ds\right)^2  dt\right)^{1/2}\\
&\leq\sqrt{T}\left(\int_{0}^{T} \int_{0}^{t}\big(b_{u}^{\prime}(s)\big)^2\big(b_{u}^{\prime}(s)\psi(s)-\mathcal{P}\big(b_{u}^{\prime}(s)\psi(s)\big)\big)^2e^{2\int_{s}^{t}b_{y}^{\prime}(x)dx}ds   dt\right)^{1/2}\\
&\leq\sqrt{T}C_be^{C_bT}\left(\int_{0}^{T} \int_{0}^{t}   \big(b_{u}^{\prime}(s)\psi(s)-\mathcal{P}\big(b_{u}^{\prime}(s)\psi(s)\big) \big)^2ds dt\right)^{1/2}\\
&\leq C_be^{C_bT}T\big\|b_{u}^{\prime}(t)\psi(t)-\mathcal{P}\big(b_{u}^{\prime}(t)\psi(t)\big)\big\|.
\end{align*}
Combining the above estimate and (\ref{est_p_pn}), (\ref{esta_2}) is further deduced as follows:
\begin{eqnarray}\label{u_uN}
\begin{aligned}
&\|u^*-u^{*,N}\|\\
&\leq\frac{1}{1-\alpha}\left(\|\hat{u}^{*}-\mathcal{P}\hat{u}^{*}\|
+\rho\tilde{u}^{*}\big\|b_{u}^{\prime}(t)\psi(t)-\mathcal{P}\big(b_{u}^{\prime}(t)\psi(t)
\big)\big\|\right.\\
&\quad+\left.\frac{\max\left\{\mathbb{E}\left[\int_{0}^{T}\hat{y}_t^{*}dt\right]-\delta,0 \right\}}{\int_{0}^{T}\tilde{\varphi}(t)dt\int_{0}^{T}\varphi(t)dt}C_b(e^{C_bT}-1)e^{C_bT}T^2
\big\|b_{u}^{\prime}(t)\psi(t)-\mathcal{P}\big(b_{u}^{\prime}(t)\psi(t)\big)\big\|\right).
\end{aligned}
\end{eqnarray}
Employing the Lipschitz continuity of $u^*-\rho J^{\prime}(u^*)$ and the mean value theorem yields
\begin{eqnarray}\label{Lipschitz1}
\begin{aligned}
&\|\hat{u}^{*}-\mathcal{P}\hat{u}^{*}\|=\left(\sum_{n=0}^{N-1}\int_{t_n}^{t_{n+1}}\left(\hat{u}^{*}-\frac{1}{\Delta t}\int_{t_n}^{t_{n+1}}\hat{u}^{*}dt  \right)^2dt\right)^{1/2}\\
&=\left(\sum_{n=0}^{N-1}\int_{t_n}^{t_{n+1}}\left(\hat{u}^{*}-\hat{u}^{*}(\xi_n)   \right)^2dt\right)^{1/2}
\leq C\Delta t,
\end{aligned}
\end{eqnarray}
where $\xi_n\in[t_n,t_{n+1}]$.

From (\ref{psi_xian}) and (\ref{bound_psi}), we have the following estimate for $\psi^{\prime}(t)$:
\begin{eqnarray}\label{daoshu_psi}
\begin{aligned}
-e^{C_b(T-t)}\leq-C_b\psi(t)-1\leq\psi^{\prime}(t)=-1-\psi(t)b_y^{\prime}(t)\leq C_b\psi(t)-1\leq e^{C_b(T-t)}-2,
\end{aligned}
\end{eqnarray}
which implies that $|\psi^{\prime}(t)|\leq e^{C_bT}$ and $b_{u}^{\prime}(t)\psi(t)$ is Lipschitz continuity. Like (\ref{Lipschitz1}), it holds that
\begin{eqnarray*}
\begin{aligned}
\big\|b_{u}^{\prime}(t)\psi(t)-\mathcal{P}\big(b_{u}^{\prime}(t)\psi(t)\big)\big\|\leq C\Delta t,
\end{aligned}
\end{eqnarray*}
which implies based on (\ref{u_uN})
\begin{eqnarray}\label{lipu_esti}
\begin{aligned}
\|u^*-u^{*,N}\|\leq C\Delta t.
\end{aligned}
\end{eqnarray}
Combining (\ref{esta_1}) and (\ref{lipu_esti}) yields the final result.
\end{proof}

Based on the error estimate of control, we have the following estimate for the multiplier.
\begin{Theorem}\label{mu_muh}
Under the assumptions in Theorem \ref{u_uh}, for sufficiently small $\Delta t$, the following estimate holds
\begin{eqnarray*}
\begin{aligned}
|\mu^*-\mu_{h}^{i,N}|&\leq C(\varepsilon_N+\Delta t),\quad i\rightarrow+\infty,
\end{aligned}
\end{eqnarray*}
where $C$ is a positive constant independent of $N$.
\end{Theorem}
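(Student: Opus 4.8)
The plan is to exploit the explicit quotient formulas for the two multipliers and to reduce the claim to the control estimate of Theorem~\ref{u_uh} together with the $O(\Delta t)$ discretisation errors for the auxiliary ODEs already obtained above. Both $\mu^*$ and $\mu_h^{i,N}$ are quotients whose numerator is $\max\{\,\cdot-\delta,\,0\,\}$ of the integral of the expected state driven by the (possibly $\mathcal{P}$-projected) gradient step $u-\rho J'(u)$ evaluated at the current iterate, and whose denominator is $\rho$ times the integral of a $\varphi$-type function --- compare (\ref{def_mu}), (\ref{def_mu_N}) and the lines of Algorithm~\ref{lsmc_algorithm_xian} defining $\hat I_i$, $\tilde I_i$ and $\mu_{h,KL}^{i-1,N}$. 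Writing $\mu^* = A^*/(\rho B^*)$ and $\mu_h^{i,N} = A^{i,N}/(\rho B^{i,N})$, and using that as $i\to\infty$ the iterate $u^{i,N}$ converges to $u^{*,N}$ by Theorem~\ref{u_uh}, I would start from the elementary identity
\[
\abs{\mu^* - \mu_h^{i,N}} \;\le\; \frac{1}{\rho\,B^*B^{i,N}}\Big(\abs{A^*-A^{i,N}}\,B^{i,N} + A^{i,N}\,\abs{B^*-B^{i,N}}\Big),
\]
so that it suffices to bound the numerator difference, to bound the denominators away from $0$, and to bound $A^{i,N}$, $B^{i,N}$ from above.

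First, the denominators. By (\ref{pro_con_3}), $B^* = \int_0^T\varphi(t)\,dt = \int_0^T\big(\psi(t)b_u'(t)\big)^2\,dt$; Assumption~\ref{jiashe2} gives $\abs{b_u'}\ge c_b>0$ and the lower bound in (\ref{bound_psi}) gives $\psi(t)\ge\big(1-e^{-C_b(T-t)}\big)/C_b$, which is strictly positive on $[0,T)$, whence $B^*\ge c_b^2\int_0^T\psi(t)^2\,dt=:c_0>0$. Since $\tilde\varphi$ solves the deterministic linear ODE (\ref{varphi_dis}), the bound $\norm{\varphi-\tilde\varphi}\le C\Delta t$ obtained inside the proof of Theorem~\ref{u_uh}, together with the $O(\Delta t)$ forward-Euler error for (\ref{varphi_dis}) (using the $C^1$ regularity of $b_y',b_u'$ and Theorem~\ref{psi_theorem} for the error in the discrete $\psi_n$) and the $O(\Delta t)$ trapezoidal quadrature error for $\tilde I$, give $\abs{B^{i,N}-B^*}\le C\Delta t$, so $B^{i,N}\ge c_0/2$ for $\Delta t$ small. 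Boundedness from above of $A^*,A^{i,N},B^*,B^{i,N}$ uniformly in $i,N$ is immediate since all the underlying state processes and the functions $\psi,\varphi,\tilde\varphi$ are bounded on $[0,T]$ under Assumptions~\ref{jiashe1} and~\ref{jiashe2}.

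Second, the numerator, which is the heart of the proof. As $r\mapsto\max\{r-\delta,0\}$ is $1$-Lipschitz, $\abs{A^*-A^{i,N}}$ is bounded by the error between the continuous state integral driven by $u^*-\rho J'(u^*)$ and the numerically evaluated $\hat I_{i+1}$; I would split it as (a) the difference of the exact state integrals associated to $u^*-\rho J'(u^*)$ (resp.\ its $\mathcal{P}$-projection) and to $\hat u^{i+1,N}=u^{i,N}-\mathcal{P}\big(\rho J_N'(u^{i,N})\big)$, plus (b) the Euler/trapezoidal error in computing $\hat I_{i+1}$ from that control. For (a): linearity of the drift (\ref{eq_linbyu}) makes $\E{y_t^u}$ solve the deterministic linear ODE $\frac{d}{dt}\E{y_t^u}=b_y'(t)\E{y_t^u}+b_u'(t)u(t)+m(t)$, so $u\mapsto\int_0^T\E{y_t^u}\,dt$ is Lipschitz from $L^2([0,T];\R)$ into $\R$; one then controls $\norm{(u^*-\rho J'(u^*))-\hat u^{i+1,N}}$ by the control error $\norm{u^*-u^{i,N}}\le C(\Delta t+\varepsilon_N)$ of Theorem~\ref{u_uh}, the stability bounds $\norm{J'(u^*)-J'(u^{i,N})}\le C_1\norm{u^*-u^{i,N}}$ and $\norm{J'(u^{i,N})-J_N'(u^{i,N})}\le\varepsilon_N$, and the $L^2$-projection error of $\mathcal{P}$ on the Lipschitz integrands, which is $O(\Delta t)$ exactly as in (\ref{Lipschitz1}). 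For (b): Algorithm~\ref{lsmc_algorithm_xian} discretises this same linear ODE by forward Euler and integrates it by the trapezoidal rule, giving an $O(\Delta t)$ error. Hence $\abs{A^*-A^{i,N}}\le C(\Delta t+\varepsilon_N)$, and inserting the numerator and denominator bounds into the quotient identity yields $\abs{\mu^*-\mu_h^{i,N}}\le C(\Delta t+\varepsilon_N)$ as $i\to\infty$.

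The step I expect to be the main obstacle is keeping the uniform lower bound on the discrete denominator $B^{i,N}=\tilde I_{i+1}$ through every layer of approximation --- the Euler discretisation of $\tilde\varphi$, the trapezoidal quadrature, and, if the state integral or $\tilde\varphi$ are evaluated along the Monte Carlo paths, the statistical fluctuation --- since the multiplier formula divides by it, and this is precisely what forces the smallness requirement on $\Delta t$; it must be secured before the quotient identity is of any use. A secondary technical point is making the chain ``control error $\Rightarrow$ gradient-step error $\Rightarrow$ state-integral error $\Rightarrow$ numerator error'' quantitative with constants independent of $i$ and $N$, which again relies on linearity of the drift so that $\E{y_t^u}$, $\psi$, $\varphi$ and $\tilde\varphi$ all satisfy deterministic linear ODEs with coefficients bounded under Assumptions~\ref{jiashe1} and~\ref{jiashe2}.
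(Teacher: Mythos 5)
Your argument is workable, but it follows a genuinely different route from the paper's. The paper never touches the quotient formulas $(\ref{def_mu})$, $(\ref{def_mu_N})$ in this proof: it inserts the intermediate quantity $\mu^{i,*}$ from the semi-discrete iteration, and extracts both $|\mu^*-\mu^{i,*}|$ and $|\mu^{i,*}-\mu_h^{i,N}|$ from the \emph{update-formula identities} --- subtracting $J^{\prime}(u^*)+\mu^*\psi(t)b_u^{\prime}(t)=0$ from $u^{i+1,*}=u^{i,*}-\rho J^{\prime}(u^{i,*})-\rho\mu^{i,*}\psi b_u^{\prime}$, and likewise for the fully discrete update $(\ref{fully_ita_lisan2})$ --- so that the multiplier difference appears as the coefficient of $\psi b_u^{\prime}$ (resp.\ $\psi^{\pi}\sum_n b_u^{\prime}(t_n)\chi_{I_n^N}$) and is isolated by the \emph{lower} bound $\|\psi b_u^{\prime}\|\geq c_b\sqrt{T}(C_bT+e^{-C_bT}-1)/(TC_b^2)$ coming from $(\ref{bound_psi})$, together with Theorem~\ref{psi_theorem} and the control errors. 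This buys the paper two things your route has to pay for separately: (i) it never needs to analyse the numerator $\hat I_{i}$ of the algorithmic multiplier, so the Monte Carlo and trapezoidal errors you flag as your ``main obstacle'' never enter --- whatever $\mu_{h}^{i,N}$ the algorithm actually used is recovered from the control iterates alone; (ii) it only needs a lower bound on $\|\psi b_u^{\prime}\|$, not on $\int_0^T\tilde\varphi\,dt$. Your quotient decomposition is more concrete and does go through under the linear drift (the Lipschitz continuity of $u\mapsto\int_0^T\E{y_t^u}\,dt$, the positivity of $B^*=\int_0^T(\psi b_u^{\prime})^2\,dt$ via $(\ref{pro_con_3})$ and $(\ref{bound_psi})$, and the $O(\Delta t)$ bound on $\|\varphi-\tilde\varphi\|$ are all legitimately available), but you should make explicit the one step you assume silently: that the KKT multiplier $\mu^*$ of Theorem~\ref{mu} coincides with the quotient $A^*/(\rho B^*)$ at the fixed point, which follows from combining $u^*=\mathbb{P}\big(u^*-\rho J^{\prime}(u^*)\big)$ in $(\ref{xian_ita})$ with the explicit form $(\ref{pro_xian})$ of $\mathbb{P}$ and with $J^{\prime}(u^*)+\mu^*\psi b_u^{\prime}=0$, and then cancelling the nonzero factor $\psi b_u^{\prime}$; and you must carry the $O(L^{-1/2})$ statistical error in $\hat I_i$ explicitly, absorbing it into $C\Delta t$ via the standing requirement $L\geq\mathcal{O}(N^2)$.
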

\begin{proof}
Using Corollary \ref{xingzhi_P_con} and the similar arguments as for deriving (\ref{esta_3}) yields
\begin{eqnarray*}
\begin{aligned}
\|u^{*}-u^{i+1,*}\|^2\leq\big\|u^{*}-u^{i,*}-\rho\big(J^{\prime}(u^*)-J^{\prime}(u^{i,*})\big) \big\|^2\leq\alpha^2\|u^{*}-u^{i,*}\|^2,
\end{aligned}
\end{eqnarray*}
which implies 
\begin{eqnarray}\label{fuzhu_mu2}
\begin{aligned}
\|u^*-u^{i+1,*}\|\leq\alpha^{i+1}\|u^*-u^{0,*}\|\leq\Delta t\|u^*-u^{0,*}\|,\quad i\rightarrow+\infty.
\end{aligned}
\end{eqnarray}
Employing the triangle inequality, the above estimate and Theorem \ref{u_uh} yields
\begin{eqnarray}\label{fuzhu_mu1}
\begin{aligned}
\|u^{i+1,*}-u^{i+1,N}\|
\leq\|u^*-u^{i+1,*}\|+\|u^{*}-u^{i+1,N}\|\leq C(\Delta t+\varepsilon_N),\quad i\rightarrow+\infty.
\end{aligned}
\end{eqnarray}

Using the triangle inequality again yields
\begin{eqnarray*}
\begin{aligned}
|\mu^*-\mu_{h}^{i,N}|=|\mu^*-\mu^{i,*}|+|\mu^{i,*}-\mu_{h}^{i,N}|.
\end{aligned}
\end{eqnarray*}
By the fact that $\mathcal{L}_{u}^{\prime}(u^{*},\mu^{*})=0$ in (\ref{grad_lage}) as well as (\ref{ita_con}), (\ref{pro_xian}), we arrive at
\begin{eqnarray}\label{mui}
\begin{aligned}
&\mathcal{L}_{u}^{\prime}(u^{*},\mu^{*})=J^{\prime}(u^{*})+\mu^{*}\psi(t)b_{u}^{\prime}(t)=0,\\
&u^{i+1,*}=\mathbb{P}\big(u^{i,*}-\rho J^{\prime}(u^{i,*})\big)=u^{i,*}-\rho J^{\prime}(u^{i,*})-\rho\mu^{i,*}\psi(t)b_{u}^{\prime}(t).
\end{aligned}
\end{eqnarray}
Differentiating the above two equations yields
\begin{eqnarray*}
\begin{aligned}
\rho\|J^{\prime}(u^*)-J^{\prime}(u^{i,*})+(\mu^{*}-\mu^{i,*})\psi(t)b_{u}^{\prime}(t)\|
\leq\|u^{i+1,*}-u^{*}\|+\|u^{i,*}-u^{*}\|.
\end{aligned}
\end{eqnarray*}
Employing the Lipschitz property of $J^{\prime}(\cdot)$ yields
\begin{eqnarray}\label{mu_mu_i}
\begin{aligned}
|\mu^{*}-\mu^{i,*}|\|\psi(t)b_{u}^{\prime}(t)\|\leq(C_1+\frac{1}{\rho})\|u^{*}-u^{i,*}\|+\frac{1}{\rho}\|u^{*}-u^{i+1,*}\|.
\end{aligned}
\end{eqnarray}
Then by the Cauchy-Schwarz inequality and (\ref{bound_psi}), we arrive at
\begin{eqnarray*}
\begin{aligned}
\|\psi(t)b_{u}^{\prime}(t)\|
\geq \frac{c_b\sqrt{T}}{T}\int_{0}^{T}\psi(t)dt
=\frac{c_b\sqrt{T}}{T}\int_{0}^{T}\int_{t}^{T}e^{\int_{t}^{s}b_y^{\prime}(x)dx}dsdt
\geq\frac{c_b\sqrt{T}(C_bT+e^{-C_bT}-1)}{TC_b^2}.
\end{aligned}
\end{eqnarray*}
According to estimate (\ref{fuzhu_mu2}), $|\mu^{*}-\mu^{i,*}|$ can be bounded as follows:
\begin{eqnarray}\label{estamu_1}
\begin{aligned}
|\mu^{*}-\mu^{i,*}|
&\leq \frac{\sqrt{T}C_b^2(C_1\rho+1)}{\rho c_b(C_bT+e^{-C_bT}-1)}\|u^{*}-u^{i,*}\|
+\frac{\sqrt{T}C_b^2}{\rho c_b(C_bT+e^{-C_bT}-1)}\|u^{*}-u^{i+1,*}\|\\
&\leq\frac{\sqrt{T}C_b^2(C_1\rho+1)}{\rho c_b(C_bT+e^{-C_bT}-1)}\alpha^{i}\|u^{*}-u^{0,*}\|\\
&\quad+\frac{\sqrt{T}C_b^2}{\rho c_b(C_bT+e^{-C_bT}-1)}\alpha^{i+1}\|u^{*}-u^{0,*}\|\\
&\leq C\Delta t,\quad i\rightarrow+\infty.
\end{aligned}
\end{eqnarray}

Next the error $|\mu^{i,*}-\mu_{h}^{i,N}|$ will be estimated, where $\mu^{i,*}$ and $\mu_{h}^{i,N}$ satisfy   the second equation of (\ref{mui}) and the following equation, respectively,
\begin{eqnarray*}
\begin{aligned}
u^{i+1,N}&
=u^{i,N}-\rho 	J_N^{\prime}(u^{i,N})-\rho\mu_{h}^{i,N}
\sum_{n=0}^{N-1}b_{u}^{\prime}(t_n)\psi_{n}\chi_{I_{n}^{N}}(t).
\end{aligned}
\end{eqnarray*}
After a similar derivation to (\ref{mu_mu_i}), we get
\begin{eqnarray*}
\begin{aligned}
\big\|\mu_{h}^{i,N}
\sum_{n=0}^{N-1}\psi_{n}b_{u}^{\prime}(t_n)\chi_{I_{n}^{N}}(t)-\mu^{i,*}\psi(t)b_{u}^{\prime}(t)\big\|
\leq \frac{C_1\rho+1}{\rho}\|u^{i,*}-u^{i,N}\|+\frac{1}{\rho}\|u^{i+1,*}-u^{i+1,N}\|+\varepsilon_N.
\end{aligned}
\end{eqnarray*}
By triangle inequality and (\ref{bound_psi}) it holds that
\begin{eqnarray}
\begin{aligned}\label{mu_mu_N}
&|\mu_{h}^{i,N}-\mu^{i,*}|\big\|\sum_{n=0}^{N-1}\psi_{n}b_{u}^{\prime}(t_n)\chi_{I_{n}^{N}}(t)\big\| \\
&\leq\big\|\mu^{i,*}\left(\psi^{\pi}(t)-\psi(t)\right)\sum_{n=0}^{N-1}b_{u}^{\prime}(t_n)\chi_{I_{n}^{N}}(t)
\big\|
+\big\|\mu^{i,*}\psi(t)\big(\sum_{n=0}^{N-1}b_{u}^{\prime}(t_n)\chi_{I_{n}^{N}}(t)-b_{u}^{\prime}(t)\big)
\big\| \\
&\quad+\frac{C_1\rho+1}{\rho}\|u^{i,*}-u^{i,N}\|+\frac{1}{\rho}\|u^{i+1,*}-u^{i+1,N}\|+\varepsilon_N \\
&\leq|\mu^{i,*}|C_b\|\psi^{\pi}(t)-\psi(t)\|
+|\mu^{i,*}|\frac{e^{C_bT}-1}{C_b}\big\|\sum_{n=0}^{N-1}b_{u}^{\prime}(t_n)\chi_{I_{n}^{N}}(t)-b_{u}^{\prime}(t)
\big\| \\
&\quad+\frac{C_1\rho+1}{\rho}\|u^{i,*}-u^{i,N}\|+\frac{1}{\rho}\|u^{i+1,*}-u^{i+1,N}\|+\varepsilon_N.
\end{aligned}
\end{eqnarray}
Based on Theorem \ref{psi_theorem}, (\ref{daoshu_psi}) and the Taylor expansion, we can bound $\|\psi^{\pi}(t)-\psi(t)\|$ as follows:
\begin{align}\label{esti_mu}
&\|\psi^{\pi}(t)-\psi(t)\| 
=\left(\sum\limits_{n=0}^{N-1}\int_{t_n}^{t_{n+1}}\big(\psi_n-\psi(t)\big)^2dt\right)^{1/2}\nonumber\\ 
&=\left(\sum\limits_{n=0}^{N-1}\int_{t_n}^{t_{n+1}}\big(\psi(t_n)+\psi^{\prime}\left(t_n+\theta(t-t_n)\right)(t-t_n)-\psi_n\big)^2dt\right)^{1/2} \nonumber\\ 
&\leq\left(\sum\limits_{n=0}^{N-1}\left( \int_{t_n}^{t_{n+1}}\big(\psi(t_n)-\psi_n\big)^2dt+ \int_{t_n}^{t_{n+1}} \big(\psi^{\prime}\left(t_n+\theta(t-t_n)\right)(t-t_n)\big)^2dt \right.\right. \nonumber\\ 
&\quad\left.\left.+\int_{t_n}^{t_{n+1}}2|\psi(t_n)-\psi_n||\psi^{\prime}\left(t_n+\theta(t-t_n)\right)| (t-t_n)dt \right)\right)^{1/2} \nonumber\\ 
&\leq\left(\sum\limits_{n=0}^{N-1}\left( \big(\psi(t_n)-\psi_n\big)^2\Delta t+ e^{2C_bT}(\Delta t)^3
+2e^{C_bT}|\psi(t_n)-\psi_n|(\Delta t)^2 \right)\right)^{1/2} \nonumber\\ 
&\leq \left(\max_{0\leq n\leq N-1}|\psi(t_n)-\psi_n|^2+e^{2C_bT}(\Delta t)^2 
+2e^{C_bT}\max_{0\leq n\leq N-1}|\psi(t_n)-\psi_n|\Delta t    \right)^{1/2} \nonumber\\
&\leq \sqrt{C_{\psi}^2+e^{2C_bT}+2e^{C_bT}C_{\psi}}\Delta t,
\end{align}
where $\theta\in(0,1)$. Similarly, we have
\begin{eqnarray}\label{esti_mu1}
\begin{aligned}
&\big\|\sum_{n=0}^{N-1}b_{u}^{\prime}(t_n)\chi_{I_{n}^{N}}(t)-b_{u}^{\prime}(t)\big\|
=\left(\sum_{n=0}^{N-1}\int_{t_n}^{t_{n+1}}\big( b_{u}^{\prime}(t_n) - b_{u}^{\prime}(t) \big)^2dt\right)^{1/2}\\
&=\left(\sum_{n=0}^{N-1}\int_{t_n}^{t_{n+1}}\big( b_{u,t}^{\prime\prime}\left(t_n+\theta(t-t_n)\right)(t-t_n) \big)^2dt\right)^{1/2}\leq C_t\Delta t.
\end{aligned}
\end{eqnarray}
Meanwhile, from the estimates of (\ref{esti_mu}) and $\|\psi\|$ we get
\begin{align*}
&|\mu_{h}^{i,N}-\mu^{i,*}|\big\|\psi^{\pi}(t)\sum_{n=0}^{N-1}b_{u}^{\prime}(t_n)\chi_{I_{n}^{N}}(t)\big\|
\geq |\mu_{h}^{i,N}-\mu^{i,*}|c_b\|\psi^{\pi}(t)\|\\
&\geq |\mu_{h}^{i,N}-\mu^{i,*}|c_b\left(\|\psi(t)\|-\sqrt{C_{\psi}^2+e^{2C_bT}+2e^{C_bT}C_{\psi}}\Delta t\right)\\
&\geq|\mu_{h,fu}^{i,N}-\mu^{i,*}|c_b \left(\frac{\sqrt{T}(C_bT+e^{-C_bT}-1)}{TC_b^2}-
\sqrt{C_{\psi}^2+e^{2C_bT}+2e^{C_bT}C_{\psi}}\Delta t\right).
\end{align*}
For sufficiently small $\Delta t$ such that
$\Delta t\leq\frac{\sqrt{T}(C_bT+e^{-C_bT}-1)}{2TC_b^2\sqrt{C_{\psi}^2+e^{2C_bT}+2e^{C_bT}C_{\psi}}}$, which implies
\begin{eqnarray*}
\begin{aligned}
|\mu_{h}^{i,N}-\mu^{i,*}|\big\|\psi^{\pi}(t)\sum_{n=0}^{N-1}b_{u}^{\prime}(t_n)\chi_{I_{n}^{N}}(t)\big\|
\geq |\mu_{h}^{i,N}-\mu^{i,*}|\frac{c_b\sqrt{T}(C_bT+e^{-C_bT}-1)}{2TC_b^2}.
\end{aligned}
\end{eqnarray*}
Taking the above estimates into (\ref{mu_mu_N}) and using (\ref{esti_mu}), (\ref{esti_mu1}) as well as (\ref{fuzhu_mu1}) yields 
\begin{align}\label{estamu_2}
&|\mu_{h}^{i,N}-\mu^{i,*}|\nonumber\\
&\leq\frac{2\sqrt{T}C_b^2}{c_b(C_bT+e^{-C_bT}-1)}
\left(\frac{C_1\rho+1}{\rho}\|u^{i,*}-u^{i,N}\|+\frac{1}{\rho}\|u^{i+1,*}-u^{i+1,N}\|+\varepsilon_N  \right.\nonumber\\
&\quad\left.+|\mu^{i,*}|C_b\sqrt{C_{\psi}^2+e^{2C_bT}+2e^{C_bT}C_{\psi}}\Delta t+|\mu^{i,*}|\frac{e^{C_bT}-1}{C_b}C_t\Delta t \right)  \\
&\leq\frac{2\sqrt{T}C_b^2}{c_b(C_bT+e^{-C_bT}-1)}\left( \left(1+\frac{(C_1\rho+2)C}{\rho}\right)\varepsilon_N
+\frac{(C_1\rho+2)C}{\rho}\Delta t  \right.\nonumber\\
&\quad\left.+(\mu^{*}+C\Delta t)C_b\sqrt{C_{\psi}^2+e^{2C_bT}+2e^{C_bT}C_{\psi}}\Delta t
+(\mu^{*}+C\Delta t)\frac{C_t(e^{C_bT}-1)}{C_b}\Delta t\right),\ 
i\rightarrow+\infty.\nonumber
\end{align}
Combing (\ref{estamu_1}) and (\ref{estamu_2}) yields the result.
\end{proof}

Next, we aim to estimate $\varepsilon_N=\sup_{i}\|J^{\prime}(u^{i,N})-J^{\prime}_N(u^{i,N})\|$. 
Prior to this, some ancillary results are introduced.
According to \cite{other2}, we define a stochastic process $\tilde{y}_{t}^{\pi}$ as follows for $t\in[t_{n},t_{n+1}]$:
\begin{equation*}
\begin{aligned}\label{taolun1}
\tilde{y}_t^{\pi}=\tilde{y}_{n}^{\pi}+b\left(\tilde{y}_{n}^{\pi},u(t_n)\right)(t-t_n)+\sigma\left(\tilde{y}_{n}^{\pi},u(t_n)\right)(W_t-W_{n}),\ \tilde{y}_{0}^{\pi}=y^0.
\end{aligned}
\end{equation*}
Then, for sufficiently small $\Delta t$ and $j\geq 1$, we have the following strong and weak estimates:
\begin{equation}\label{estimate_sde}
\begin{aligned}
&\max_{0\leq n\leq N-1}\big(\sup_{0\leq t\leq T}\mathbb{E}\big[|y_t-\tilde{y}_t^{\pi}|^j\big]
+\sup_{t_n\leq t\leq t_{n+1}}\mathbb{E}\big[|y_t-y_{t_n}|^j\big] \big)^{1/j}=\mathcal{O}\big((\Delta t)^{1/2}\big),\\
&\max_{0\leq n\leq N-1}\big(\sup_{0\leq t\leq T}|\mathbb{E}[y_t-\tilde{y}_t^{\pi}]|
+\sup_{t_n\leq t\leq t_{n+1}}|\mathbb{E}[y_t-y_{t_n}]|\big)=\mathcal{O}(\Delta t).
\end{aligned}
\end{equation}
It is obvious that $y_n=\tilde{y}_{n}^{\pi}$ for the time node $t_n\in\Pi$. From (\ref{estimate_sde}), we further have the following results for $t\in [t_n,t_{n+1}]$:
\begin{equation}\label{estimate_sde1}
\begin{aligned}
&\big(\mathbb{E}\big[|y_t-y_n|^{j}\big]\big)^{1/j}\leq\big(
\mathbb{E}\big[|y_t-y_{t_n}|^{j}\big]\big)^{1/j}+\big(\mathbb{E}\big[|y_{t_n}-\tilde{y}_{n}^{\pi}|^{j}\big]\big)^{1/j}=\mathcal{O}\big((\Delta t)^{1/2}\big),\\
&|\mathbb{E}[y_t-y_n]|\leq |\mathbb{E}[y_t-y_{t_n}]|+|\mathbb{E}[y_{t_n}-\tilde{y}_{n}^{\pi}]|=
\mathcal{O}(\Delta t).
\end{aligned}
\end{equation}
According to Proposition 4, Theorem 7 and Theorem 8 in \cite{other3}, we have the following lemma.
\begin{Lemma}(\cite{other3})\label{fuzhu_pq}
If the functions $b$, $\sigma$, $\hat{f}$ and $g$ are bounded in $y$, are uniformly Lipschitz continuous w.r.t. $(y,p,q)$ and H\"{o}lder continuous of parameter $\frac{1}{2}$ w.r.t. $t$. In addition, $b,\sigma\in C_{b}^{4,2},\hat{f}\in C_{b}^{4,4,4,2},g\in C_{b}^{4+\alpha}$, then it holds that 
$\hat{\eta}(t,y)\in C_{b}^{2,4}$ and
\begin{equation*}\label{xianyan}
\begin{aligned}
\hat{p}_{n}-\hat{p}_{t_n}&=\hat{\eta}_y^{\prime}(t_n,y_{t_n})(\tilde{y}_{n}^{\pi}-y_{t_n})+\mathcal{O}(\Delta t)
+\mathcal{O}\big(|\tilde{y}_{n}^{\pi}-y_{t_n}|^2\big),\\
\hat{q}_{n}-\hat{q}_{t_n}&=[\hat{\eta}_y^{\prime}\sigma]_{y}^{\prime}(t_n,y_{t_n})(\tilde{y}_{n}^{\pi}-y_{t_n})+\mathcal{O}(\Delta t)+\mathcal{O}\big(|\tilde{y}_{n}^{\pi}-y_{t_n}|^2\big).
\end{aligned}
\end{equation*}
\end{Lemma}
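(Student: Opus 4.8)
This lemma is precisely Proposition~4 together with Theorems~7 and~8 of \cite{other3}, transcribed into the present notation, so the plan is to reproduce the structure of that analysis. The starting point is the nonlinear Feynman--Kac representation \eqref{kac}: the exact values are $\hat p_{t_n}=\hat\eta(t_n,y_{t_n})$ and $\hat q_{t_n}=\sigma(y_{t_n},u(t_n))\hat\eta_y^{\prime}(t_n,y_{t_n})$, with $\hat\eta$ the solution of the associated semilinear parabolic PDE. Under the stated smoothness hypotheses ($b,\sigma\in C_b^{4,2}$, $\hat f\in C_b^{4,4,4,2}$, $g\in C_b^{4+\alpha}$), interior Schauder estimates for that PDE yield the regularity $\hat\eta\in C_b^{2,4}$, i.e.\ $\hat\eta$ and its derivatives up to $\hat\eta_{tt}^{\prime\prime}$ and $\partial_y^4\hat\eta$ are bounded. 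I would then split
\[
\hat p_n-\hat p_{t_n}=\bigl(\hat\eta(t_n,\tilde y_n^{\pi})-\hat\eta(t_n,y_{t_n})\bigr)+\bigl(\hat p_n-\hat\eta(t_n,\tilde y_n^{\pi})\bigr),
\]
and similarly for $\hat q_n$. The first bracket is a Taylor expansion in $y$ using the $C^2$-in-$y$ bound, which produces exactly $\hat\eta_y^{\prime}(t_n,y_{t_n})(\tilde y_n^{\pi}-y_{t_n})+\mathcal O(|\tilde y_n^{\pi}-y_{t_n}|^2)$; the analogous expansion of $\sigma(\cdot,u(t_n))\hat\eta_y^{\prime}(t_n,\cdot)$ in $y$ produces the coefficient $[\hat\eta_y^{\prime}\sigma]_y^{\prime}$. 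Everything then reduces to showing that the discretisation errors along the Euler path, $e_n^p:=\hat p_n-\hat\eta(t_n,\tilde y_n^{\pi})$ and $e_n^q:=\hat q_n-\sigma(\tilde y_n^{\pi},u(t_n))\hat\eta_y^{\prime}(t_n,\tilde y_n^{\pi})$, are of order $\Delta t$ in $L^2$.

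The core is a one-step consistency estimate. Inserting $\hat\eta(t_{n+1},\tilde y_{n+1}^{\pi})$ into the $q$-recursion $\hat q_n=\tfrac1{\Delta t}\mathbb{E}[\Delta W_{n+1}\hat p_{n+1}\mid\mathcal F_{t_n}]$ and Taylor-expanding in $(t,y)$ around $(t_n,\tilde y_n^{\pi})$ along the Euler increment $\tilde y_{n+1}^{\pi}-\tilde y_n^{\pi}=b\,\Delta t+\sigma\,\Delta W_{n+1}$, the odd Gaussian moments of $\Delta W_{n+1}$ vanish: the term linear in $\Delta W_{n+1}$ reproduces $\sigma\hat\eta_y^{\prime}(t_n,\tilde y_n^{\pi})$ after multiplication by $\Delta W_{n+1}/\Delta t$ and conditioning, while the remaining terms, controlled by $\mathbb{E}[|\Delta W_{n+1}|^{2k}\mid\mathcal F_{t_n}]=\mathcal O((\Delta t)^{k})$ and by the boundedness of up to four $y$-derivatives of $\hat\eta$, contribute $\mathcal O(\Delta t)$; this is the residual $R_n^q=\mathcal O(\Delta t)$. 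Similarly, substituting the exact solution into the $p$-recursion $\hat p_n=\mathbb{E}[\hat p_{n+1}+\hat f(\tilde y_n^{\pi},\hat p_{n+1},\hat q_n,u(t_n))\,\Delta t\mid\mathcal F_{t_n}]$ and using It\^o's formula, the drift part of the expansion is cancelled exactly by the PDE satisfied by $\hat\eta$, leaving a local residual $R_n^p=\mathcal O((\Delta t)^2)$ in $L^2$.

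The errors are then propagated by a discrete Gronwall argument. Subtracting exact and approximate recursions gives, schematically, $e_n^p=\mathbb{E}[e_{n+1}^p\mid\mathcal F_{t_n}]+\Delta t\,\mathbb{E}[\hat f(\cdot)-\hat f(\cdot)\mid\mathcal F_{t_n}]+R_n^p$ and $e_n^q=\tfrac1{\Delta t}\mathbb{E}[\Delta W_{n+1}e_{n+1}^p\mid\mathcal F_{t_n}]+R_n^q$. Using the Lipschitz continuity of $\hat f$, conditional Jensen, the Cauchy--Schwarz inequality and the isometry $\mathbb{E}[|\Delta W_{n+1}|^2\mid\mathcal F_{t_n}]=\Delta t$ to bound $\mathbb{E}[|e_n^q|^2]$ by a multiple of $\tfrac1{\Delta t}\mathbb{E}[|e_{n+1}^p|^2]$ plus $\mathbb{E}[|R_n^q|^2]$, one obtains a recursion $\mathbb{E}[|e_n^p|^2]\le(1+C\Delta t)\,\mathbb{E}[|e_{n+1}^p|^2]+C(\Delta t)^3$; since $e_N^p=0$ and $e_N^q$ is handled separately, summation yields $\max_n(\mathbb{E}[|e_n^p|^2])^{1/2}=\mathcal O(\Delta t)$ and $(\mathbb{E}[|e_n^q|^2])^{1/2}=\mathcal O(\Delta t)$. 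Substituting these $L^2$ bounds together with the strong and weak estimates \eqref{estimate_sde1} for $\tilde y_n^{\pi}-y_{t_n}$ into the Taylor split of the first paragraph gives the two asserted expansions, the $\mathcal O(\cdot)$ being understood in the $L^2$ sense as in \cite{other3}.

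The main obstacle is the one-step analysis of the $q$-component: obtaining remainder $\mathcal O(\Delta t)$ rather than merely $\mathcal O(\sqrt{\Delta t})$ forces one to expand $\hat\eta$ to third (and, for the remainder, fourth) order in $y$, so that every term of formal size $\sqrt{\Delta t}$ is annihilated by the vanishing of odd Gaussian moments before the division by $\Delta t$; this is exactly where the strong hypothesis $\hat\eta\in C_b^{2,4}$ is needed, and the careful bookkeeping of these expansions, together with verifying that the feedback of $e^q$ into the $p$-recursion does not spoil the first-order rate, is the delicate technical point carried out in \cite{other3}.
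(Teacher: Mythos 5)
The paper does not prove this lemma at all: it is imported verbatim from \cite{other3}, with the text immediately preceding the statement pointing to Proposition~4 and Theorems~7 and~8 of that reference. So there is no in-paper proof to compare against; what you have written is a reconstruction of the cited argument. Your overall architecture is the right one and matches Gobet--Labart: Feynman--Kac representation, the split $\hat p_n-\hat p_{t_n}=(\hat\eta(t_n,\tilde y_n^{\pi})-\hat\eta(t_n,y_{t_n}))+(\hat p_n-\hat\eta(t_n,\tilde y_n^{\pi}))$ with a Taylor expansion producing the leading terms $\hat\eta_y^{\prime}(t_n,y_{t_n})(\tilde y_n^{\pi}-y_{t_n})$ and $[\hat\eta_y^{\prime}\sigma]_y^{\prime}(t_n,y_{t_n})(\tilde y_n^{\pi}-y_{t_n})$, followed by a consistency-plus-stability analysis of the backward Euler recursion along the Euler path.

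There is, however, a concrete gap in the $q$-component of your Gronwall step. From $e_n^q=\tfrac1{\Delta t}\mathbb{E}[\Delta W_{n+1}e_{n+1}^p\mid\mathcal F_{t_n}]+R_n^q$, the Cauchy--Schwarz/isometry bound you invoke gives only
\begin{equation*}
\mathbb{E}\bigl[|e_n^q|^2\bigr]\;\lesssim\;\frac{1}{\Delta t}\,\mathbb{E}\bigl[|e_{n+1}^p|^2\bigr]+\mathbb{E}\bigl[|R_n^q|^2\bigr],
\end{equation*}
and with $\mathbb{E}[|e_{n+1}^p|^2]=\mathcal O((\Delta t)^2)$ this yields $\mathbb{E}[|e_n^q|^2]=\mathcal O(\Delta t)$, i.e.\ only a half-order remainder for $\hat q_n-\hat q_{t_n}$, not the $\mathcal O(\Delta t)$ claimed in the lemma. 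This is precisely the classical barrier (Zhang, Bouchard--Touzi) that the expansion of \cite{other3} is designed to overcome. The extra half order is recovered not by Cauchy--Schwarz but by exploiting the Markov representation $e_{n+1}^p=E_{n+1}(y_{n+1})$ and performing a Gaussian integration by parts on $\tfrac1{\Delta t}\mathbb{E}[\Delta W_{n+1}E_{n+1}(y_{n+1})\mid\mathcal F_{t_n}]$, which converts it into $\sigma\,\partial_yE_{n+1}$ evaluated along the Euler step plus a genuinely $\mathcal O(\Delta t)$ remainder; controlling $\partial_yE_{n+1}$ uniformly is where the strong hypotheses $b,\sigma\in C_b^{4,2}$, $\hat f\in C_b^{4,4,4,2}$, $g\in C_b^{4+\alpha}$ and the resulting $\hat\eta\in C_b^{2,4}$ are actually consumed. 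You flag this as ``the delicate technical point carried out in \cite{other3},'' but the mechanism you actually write down does not deliver the stated rate, so as a standalone proof the $q$-estimate is incomplete. Since the lemma is quoted rather than proved in this paper, the honest resolution is either to cite it as the authors do, or to reproduce the integration-by-parts step explicitly.
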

The above lemma implies the following results by combining (\ref{estimate_sde}):
\begin{equation}\label{weak_pq}
\begin{aligned}
|\mathbb{E}[\hat{p}_{n}-\hat{p}_{t_n}]|=\mathcal{O}(\Delta t),\quad 
|\mathbb{E}[\hat{q}_{n}-\hat{q}_{t_n}]|=\mathcal{O}(\Delta t).
\end{aligned}
\end{equation}

From the LSMC discretization processes in subsection \ref{lsmc_process}, it is clear that the estimate between $J^{\prime}(u)$ and $J^{\prime}_{N}(u)$ is determined by projection error produced in (\ref{infinite}) and Monte Carlo simulation error produced in (\ref{MC1}). For the sake of clarity, we denote $\big(\hat{P}_{n}^{K}(y_{n}),\hat{Q}_{n}^{\tilde{K}}(y_{n})\big)$ as follows:
\begin{equation*}
\left\{\begin{aligned}
\hat{Q}_{n}^{\tilde{K}}(y_{n})&=\hat{\mathcal{P}}_{1,n}\big(\frac{\Delta W_{n+1}}{\Delta t}\hat{P}_{n+1}^{K}(y_{n+1})\big),\\
\hat{P}_{n}^{K}(y_{n})&=\hat{\mathcal{P}}_{0,n}\big(\hat{P}_{n+1}^{K}(y_{n+1})
+\hat{f}\big(y_{n},\hat{P}_{n+1}^{K}(y_{n+1}),\hat{Q}_{n}^{\tilde{K}}(y_{n}),u(t_n)\big)\Delta t\big).
\end{aligned}\right.
\end{equation*}
The definitions of $\hat{\mathcal{P}}_{1,n}(\cdot)$ and $\hat{\mathcal{P}}_{0,n}(\cdot)$ can be obtained by (\ref{infinite}). 
From \cite{other4}, we have the following estimate for the projection error.
\begin{Lemma}(\cite{other4})\label{error_pro}
Under the assumption that $\hat{f}$ is Lipschitz continuous with respect to $p,q$, the following projection error estimate holds:
\begin{equation}\label{pkqk}
\begin{aligned}
&\max_{0\leq n\leq N}\mathbb{E}\big[\big(\hat{p}_{n}-\hat{P}_{n}^{K}(y_n)\big)^2\big]+\sum\limits_{n=0}^{N-1}\Delta t\mathbb{E}\big[\big(\hat{q}_{n}-\hat{Q}_{n}^{\tilde{K}}(y_n)\big)^2\big]\\
&\quad\leq C\left(\sum\limits_{n=0}^{N-1}\mathbb{E}\big[\big(\hat{\mathcal{P}}_{0,n}\hat{p}_{n}-\hat{p}_{n}\big)^2\big]
+\sum\limits_{n=0}^{N-1}\Delta t\mathbb{E}\big[\big(\hat{\mathcal{P}}_{1,n}\hat{q}_{n}-\hat{q}_{n}\big)^2\big]\right),
\end{aligned}
\end{equation}
where $C$ is a positive constant independent of $N$.
\end{Lemma}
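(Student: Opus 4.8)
The plan is to prove \eqref{pkqk} by a backward induction over $n=N,N-1,\dots,0$, exploiting that each regression step in \eqref{infinite} is an orthogonal $L^{2}$-projection, so that at every time level the error decomposes, via Pythagoras, into the one-step projection residual already appearing on the right-hand side of \eqref{pkqk} plus a propagated error that lies in the approximation subspace and is therefore a contraction of the incoming error. Write $Z_{n+1}:=\hat p_{n+1}-\hat P_{n+1}^{K}(y_{n+1})$, $e_n^{p}:=\mathbb{E}[(\hat p_{n}-\hat P_{n}^{K}(y_n))^{2}]$, $e_n^{q}:=\mathbb{E}[(\hat q_{n}-\hat Q_{n}^{\tilde K}(y_n))^{2}]$, and denote by $r_n^{p}=\mathbb{E}[(\hat{\mathcal P}_{0,n}\hat p_n-\hat p_n)^{2}]$ and $r_n^{q}=\mathbb{E}[(\hat{\mathcal P}_{1,n}\hat q_n-\hat q_n)^{2}]$ the two residuals in \eqref{pkqk}. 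Since $\hat P_N^{K}(y_N)=g(y_N)=\hat p_N$ we have $e_N^{p}=0$, which initializes the recursion.

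For the $p$-component, let $X_n$ and $X_n^{K}$ be the random variables being regressed in \eqref{disscheme_fu} and \eqref{infinite}, respectively, so $\hat p_n=\mathbb{E}_{t_n}[X_n]$ and $\hat P_n^{K}=\hat{\mathcal P}_{0,n}(X_n^{K})$. Because $\hat{\mathcal P}_{0,n}$ factors through $\mathbb{E}_{t_n}[\cdot]$, one has $\hat p_n-\hat P_n^{K}=(\hat p_n-\hat{\mathcal P}_{0,n}\hat p_n)+\hat{\mathcal P}_{0,n}(X_n-X_n^{K})$ with the two summands orthogonal, hence $e_n^{p}=r_n^{p}+\mathbb{E}[(\hat{\mathcal P}_{0,n}(X_n-X_n^{K}))^{2}]$ with \emph{no} cross term. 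Using the contraction property of the projection, conditional Jensen, the Lipschitz continuity of $\hat f$ in $(p,q)$ with constant $L$, and Young's inequality with parameter $\varepsilon=c\,\Delta t$, I would bound the last term by $(1+c\,\Delta t)\,\mathbb{E}[(\mathbb{E}_{t_n}[Z_{n+1}])^{2}]+C_{L}(\Delta t/c+\Delta t^{2})(e_{n+1}^{p}+e_n^{q})$. The decisive point is \emph{not} to estimate $\mathbb{E}[(\mathbb{E}_{t_n}[Z_{n+1}])^{2}]$ by $e_{n+1}^{p}$, but to keep the identity $\mathbb{E}[(\mathbb{E}_{t_n}[Z_{n+1}])^{2}]=e_{n+1}^{p}-\mathbb{E}[\mathrm{Var}_{t_n}(Z_{n+1})]$.

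For the $q$-component the same orthogonality gives $e_n^{q}=r_n^{q}+\mathbb{E}[(\hat{\mathcal P}_{1,n}[\tfrac{\Delta W_{n+1}}{\Delta t}Z_{n+1}])^{2}]$; subtracting $\mathbb{E}_{t_n}[Z_{n+1}]$ (harmless after multiplication by $\Delta W_{n+1}$) and applying conditional Cauchy--Schwarz with $\mathbb{E}_{t_n}[(\Delta W_{n+1})^{2}]=\Delta t$ yields $\Delta t\,(e_n^{q}-r_n^{q})\le\mathbb{E}[\mathrm{Var}_{t_n}(Z_{n+1})]$. Substituting the variance identity and this inequality into the $p$-estimate, and choosing $c>2L^{2}$ so that the coefficient of $e_n^{q}$ becomes negative for $\Delta t$ small, I obtain $e_n^{p}\le r_n^{p}+C\,\Delta t\,r_n^{q}+(1+C\,\Delta t)\,e_{n+1}^{p}$. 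A discrete Gronwall inequality with $e_N^{p}=0$ then gives $\max_{0\le n\le N}e_n^{p}\le C\sum_{n=0}^{N-1}(r_n^{p}+\Delta t\,r_n^{q})$. Finally, summing $\Delta t\,e_n^{q}\le\Delta t\,r_n^{q}+\mathbb{E}[\mathrm{Var}_{t_n}(Z_{n+1})]$ over $n$ and controlling $\sum_{n}\mathbb{E}[\mathrm{Var}_{t_n}(Z_{n+1})]$ through the same $p$-recursion via the telescoping identity $\sum_{n}\big((1+C\Delta t)e_{n+1}^{p}-e_n^{p}\big)=C\Delta t\sum_{n}e_{n+1}^{p}\le CT\max_{n}e_n^{p}$ produces the $\sum_{n}\Delta t\,e_n^{q}$ part of \eqref{pkqk}.

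The main obstacle is exactly the coupling of these two recursions: the $q$-error carries a factor $\Delta t^{-1}$, so a naive substitution would push the multiplier of $e_{n+1}^{p}$ from $1+\mathcal O(\Delta t)$ up to $1+\mathcal O(1)$ and make the Gronwall constant blow up like $\exp(cN)$. The cure --- recognizing that the troublesome quantity is really the conditional \emph{variance} of $Z_{n+1}$, that it enters the $p$-estimate with a favorable sign, and tuning the Young parameter $c>2L^{2}$ so the cancellation is quantitatively effective --- is the technical heart of the argument and is carried out in detail in \cite{other4}; note that only the Lipschitz regularity of $\hat f$ is needed here, not the stronger smoothness of Lemma~\ref{fuzhu_pq} or the weak-error bounds \eqref{weak_pq}.
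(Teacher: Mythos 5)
The paper does not prove this lemma at all --- it is imported verbatim from \cite{other4} (see also Lemor--Gobet--Warin, \cite{goblt2}) --- so there is no in-paper argument to compare against; your sketch correctly reconstructs the standard stability proof for the $L^2$-projected backward Euler scheme (Pythagorean decomposition at each level using that $\hat{\mathcal P}_{0,n}$ and $\hat{\mathcal P}_{1,n}$ factor through $\mathbb{E}_{t_n}[\cdot]$, the conditional-variance identity to absorb the $\Delta t^{-1}$ hidden in the $q$-error, a Young parameter exceeding $2L^2$, and discrete Gronwall from $e_N^p=0$), which is exactly the route taken in the cited source. In particular you correctly isolate the one genuinely delicate point --- that bounding $\mathbb{E}\big[(\mathbb{E}_{t_n}[Z_{n+1}])^2\big]$ crudely by $e_{n+1}^p$ would inflate the contraction factor from $1+\mathcal{O}(\Delta t)$ to $1+\mathcal{O}(1)$ and blow up the Gronwall constant --- and your variance-cancellation cure is the standard and correct one.
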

\begin{Remark}
The explicit form of the above projection error depends on the choice of the basis functions and it is difficult to quantify except for some special classes of basis functions such as indicator functions of hypercubes. Following \cite{goblt1} and \cite{goblt2}, it follows that $\mathbb{E}\big[\big(\hat{\mathcal{P}}_{0,n}\hat{p}_{n}-\hat{p}_{n}\big)^2\big]$ and $\mathbb{E}\big[\Delta t\big(\hat{\mathcal{P}}_{1,n}\hat{q}_{n}-\hat{q}_{n}\big)^2\big]$ can be bounded by $C\tau^2$ for all $n=0,1,...,N-1$, where $\tau$ denotes the edge length of the hypercube.
Therefore, if we choose $\tau=\mathcal{O}\left((\Delta t)^{3/2}\right)$, estimate (\ref{pkqk}) reaches $\mathcal{O}\left((\Delta t)^2\right)$.
\end{Remark}
We define $\mathbb{J}_{N}^{\prime}(u)$ as follows:
\begin{equation*}
\begin{aligned}
\mathbb{J}_{N}^{\prime}(u):=\mathbb{E}\big[\hat{P}_t^{K,L,\pi}
\sum_{n=0}^{N-1}b_{u}^{\prime}(t_n)\chi_{I_{n}^{N}}(t)
+\hat{Q}_{t}^{\tilde{K},L,\pi}\sigma_{u}^{\prime}(y_{t}^{\pi},u)\big]+
j^{\prime}(u).
\end{aligned}
\end{equation*}
For the $u\in U_{N}$, it is obvious that $J_{N}^{\prime}(u)$ defined in (\ref{fully_lj_mc}) is the Monte Carlo approximation of $\mathbb{J}_{N}^{\prime}(u)$ with $L$ simulation path. Therefore, using the Monte Carlo estimate yields
\begin{equation}
\begin{aligned}\label{moni_mc}
&\|J_{N}^{\prime}(u)-\mathbb{J}_{N}^{\prime}(u)\|\\
&=\left(\sum\limits_{n=0}^{N-1}\int_{t_n}^{t_{n+1}}\left(\mathbb{E}\big[\hat{P}_t^{K,L,\pi}
\sum_{n=0}^{N-1}b_{u}^{\prime}(t_n)\chi_{I_{n}^{N}}(t)
+\hat{Q}_{t}^{\tilde{K},L,\pi}\sigma_{u}^{\prime}(y_{t}^{\pi},u)\big] \right.\right.\\
&\quad\left.\left.-\frac{1}{L}\sum_{\lambda=1}^{L}\big[_{\lambda}\hat{P}_t^{K,L,\pi}
\sum_{n=0}^{N-1}b_{u}^{\prime}(t_n)\chi_{I_{n}^{N}}(t)
+_{\lambda}\hat{Q}_{t}^{\tilde{K},L,\pi}\sigma_{u}^{\prime}(_{\lambda}y_{t}^{\pi},u)\big]
\right)^2dt\right)^{1/2}\\
&=\left(\sum\limits_{n=0}^{N-1}\left(\mathbb{E}\big[\hat{P}_n^{K,L}(y_n)
b_{u}^{\prime}(t_n)+\hat{Q}_{n}^{\tilde{K},L}(y_n)\sigma_{u}^{\prime}\left(y_{n},u(t_n)\right)\big] \right.\right.\\
&\quad\left.\left.-\frac{1}{L}\sum_{\lambda=1}^{L}\big[\hat{P}_n^{K,L}(_{\lambda}y_n)
b_{u}^{\prime}(t_n)+\hat{Q}_{n}^{\tilde{K},L}(_{\lambda}y_n)\sigma_{u}^{\prime}\left(_{\lambda}y_{n},u(t_n)\right)
\big]\right)^2\Delta t \right)^{1/2}
=\mathcal{O}(L^{-1/2}).
\end{aligned}
\end{equation}
Then the following estimate for $\varepsilon_N$ is obtained.
\begin{Theorem}\label{JN}
Under the Assumptions \ref{jiashe1}, \ref{jiashe2} and the assumptions in Lemmas \ref{fuzhu_pq}, \ref{error_pro}, for sufficiently small $\Delta t$ and Monte Carlo simulation path $L\geq\mathcal{O}(N^2)$, the following estimate holds:
\begin{equation*}
\begin{aligned}
&\sup\limits_i\|J^{\prime}(u^{i,N})-J^{\prime}_N(u^{i,N})\|^2\\
&\leq C\left( (\Delta t)^2+ \sum\limits_{n=0}^{N-1}\mathbb{E}\big[\big(\hat{\mathcal{P}}_{0,n}\hat{p}_{n}-\hat{p}_{n}\big)^2\big]+\sum\limits_{n=0}^{N-1}\Delta t\mathbb{E}\big[\big(\hat{\mathcal{P}}_{1,n}\hat{q}_{n}-\hat{q}_{n}\big)^2\big] \right.\\
&\quad\left.+\max_{0\leq n\leq N}\mathbb{E}\big[\big(\hat{P}_{n}^{K}(y_n)-\hat{P}_{n}^{K,L}(y_n)\big)^2\big]+\sum\limits_{n=0}^{N-1}\Delta t\mathbb{E}\big[\big(\hat{Q}_{n}^{\tilde{K}}(y_n)-\hat{Q}_{n}^{\tilde{K},L}(y_n)\big)^2\big] \right),
\end{aligned}
\end{equation*}
where $C$ is a positive constant independent of $N$.
\end{Theorem}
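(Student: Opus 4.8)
The plan is to bound $\|J'(u^{i,N})-J'_N(u^{i,N})\|$ uniformly in $i$ by inserting three intermediate functionals that successively isolate the time–discretization error, the regression–projection error, the Monte-Carlo-in-regression error, and the final Monte-Carlo-averaging error. Write $\mathbb{J}'_{N,1}(u)$ for the functional obtained from $J'(u)$ in (\ref{L_mu_dis}) by replacing the continuous-time data $(\hat{p}_t,\hat{q}_t,b_u'(t),\sigma_u'(y_t,u))$ by the piecewise-constant time-discrete data $(\hat{p}_t^{\pi},\hat{q}_t^{\pi},\sum_{n}b_u'(t_n)\chi_{I_n^N}(t),\sigma_u'(y_t^{\pi},u))$ (here $\hat p_n,\hat q_n$ solve (\ref{disscheme_fu})), and write $\mathbb{J}'_{N,2}(u)$ for the functional obtained from $\mathbb{J}'_{N,1}(u)$ by further replacing $(\hat p^{\pi},\hat q^{\pi})$ by the $L^2$-regression pair $(\hat P^{K,\pi},\hat Q^{\tilde K,\pi})$; all three of $\mathbb{J}'_{N,1},\mathbb{J}'_{N,2},\mathbb{J}'_N$ carry a genuine expectation rather than the outer Monte Carlo average. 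The triangle inequality then gives
$$\|J'(u^{i,N})-J'_N(u^{i,N})\|\le\|J'(u^{i,N})-\mathbb{J}'_{N,1}(u^{i,N})\|+\|\mathbb{J}'_{N,1}(u^{i,N})-\mathbb{J}'_{N,2}(u^{i,N})\|+\|\mathbb{J}'_{N,2}(u^{i,N})-\mathbb{J}'_N(u^{i,N})\|+\|\mathbb{J}'_N(u^{i,N})-J'_N(u^{i,N})\|,$$
and the last term is $\mathcal{O}(L^{-1/2})$ by (\ref{moni_mc}); after squaring, the hypothesis $L\ge\mathcal{O}(N^2)$ turns it into $\mathcal{O}((\Delta t)^2)$.

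For the two middle terms I would use the Cauchy--Schwarz inequality pointwise in $t_n$ together with the bounds $|b_u'|\le C_b$ and $|\sigma_u'|\le C_\sigma$ of Assumption~\ref{jiashe1} (note that $j'(u)$ cancels in every difference). Since these functionals are piecewise constant,
$$\|\mathbb{J}'_{N,1}(u)-\mathbb{J}'_{N,2}(u)\|^2\le 2\sum_{n=0}^{N-1}\Delta t\left(C_b^2\,\mathbb{E}[(\hat{p}_n-\hat{P}_n^{K}(y_n))^2]+C_\sigma^2\,\mathbb{E}[(\hat{q}_n-\hat{Q}_n^{\tilde K}(y_n))^2]\right)\le C\left(\max_{0\le n\le N}\mathbb{E}[(\hat{p}_n-\hat{P}_n^{K}(y_n))^2]+\sum_{n=0}^{N-1}\Delta t\,\mathbb{E}[(\hat{q}_n-\hat{Q}_n^{\tilde K}(y_n))^2]\right),$$
and by Lemma~\ref{error_pro} this is bounded by $C(\sum_n\mathbb{E}[(\hat{\mathcal{P}}_{0,n}\hat{p}_n-\hat{p}_n)^2]+\sum_n\Delta t\,\mathbb{E}[(\hat{\mathcal{P}}_{1,n}\hat{q}_n-\hat{q}_n)^2])$, i.e.\ by the projection-error terms of the statement. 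The identical computation with $(\hat P^{K},\hat Q^{\tilde K})$ in place of $(\hat p,\hat q)$ and $(\hat P^{K,L},\hat Q^{\tilde K,L})$ in place of $(\hat P^{K},\hat Q^{\tilde K})$ bounds $\|\mathbb{J}'_{N,2}(u)-\mathbb{J}'_N(u)\|^2$ by $C(\max_n\mathbb{E}[(\hat{P}_n^{K}(y_n)-\hat{P}_n^{K,L}(y_n))^2]+\sum_n\Delta t\,\mathbb{E}[(\hat{Q}_n^{\tilde K}(y_n)-\hat{Q}_n^{\tilde K,L}(y_n))^2])$, which are the remaining two terms of the statement.

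The main work, and the step I expect to be the real obstacle, is the first term: one must show
$$\max_{0\le n\le N-1}\ \sup_{t\in I_n^N}\bigl|\,\mathbb{E}[\hat{p}_t b_u'(t)+\hat{q}_t\sigma_u'(y_t,u)]-\mathbb{E}[\hat{p}_n b_u'(t_n)+\hat{q}_n\sigma_u'(y_n,u)]\,\bigr|=\mathcal{O}(\Delta t).$$
Since $b_u'$ is deterministic and Lipschitz by Assumption~\ref{jiashe2}, the $\hat p$-contribution splits as $b_u'(t)\mathbb{E}[\hat{p}_t-\hat{p}_{t_n}]+(b_u'(t)-b_u'(t_n))\mathbb{E}[\hat{p}_{t_n}]+b_u'(t_n)\mathbb{E}[\hat{p}_{t_n}-\hat{p}_n]$, whose first two pieces are $\mathcal{O}(\Delta t)$ by the regularity $\hat\eta\in C_b^{2,4}$ from Lemma~\ref{fuzhu_pq} and Assumption~\ref{jiashe2}, and whose third piece is $\mathcal{O}(\Delta t)$ by the weak estimate (\ref{weak_pq}). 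The $\hat q$-contribution is delicate because $\hat q$ carries only the strong rate $(\Delta t)^{1/2}$; here I would telescope through $\mathbb{E}[\hat{q}_{t_n}\sigma_u'(y_{t_n},u)]$ and, on each subinterval, apply It\^o's formula to the smooth fields $t\mapsto\sigma(y,u(t))\hat\eta_y'(t,y)$ (so that $\hat q_t=\sigma(y_t,u(t))\hat\eta_y'(t,y_t)$ by (\ref{kac})) and $y\mapsto\sigma_u'(y,u)$, using that the stochastic-integral cross-terms either vanish in expectation after conditioning on $\mathcal{F}_{t_n}$ or are $o(\Delta t)$ by Cauchy--Schwarz, so that only $\mathcal{O}(\Delta t)$ Lebesgue-type terms survive; the leftover $\mathbb{E}[(\hat{q}_{t_n}-\hat{q}_n)\sigma_u'(y_{t_n},u)]+\mathbb{E}[\hat{q}_n(\sigma_u'(y_{t_n},u)-\sigma_u'(y_n,u))]$ is treated with the expansion of Lemma~\ref{fuzhu_pq} and the estimates $|\mathbb{E}[y_n-y_{t_n}]|=\mathcal{O}(\Delta t)$, $\mathbb{E}[|y_n-y_{t_n}|^2]=\mathcal{O}(\Delta t)$ from (\ref{estimate_sde})--(\ref{estimate_sde1}), all coefficients being smooth and bounded under the hypotheses of Lemma~\ref{fuzhu_pq}. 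Uniformity in $i$ is automatic, since those same hypotheses make $\hat{p}_t=\hat\eta(t,y_t)$ and $\hat{q}_t=\sigma(y_t,u(t))\hat\eta_y'(t,y_t)$ uniformly bounded and the Lipschitz constants used above $i$-independent. Squaring the four contributions, summing, and absorbing the $\mathcal{O}(L^{-1})$ and $\mathcal{O}((\Delta t)^2)$ pieces into $C(\Delta t)^2$ then yields the asserted estimate; I expect the weak expansion of the $\hat q$-term to be the only genuinely technical point.
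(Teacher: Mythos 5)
Your proposal is correct and follows essentially the same route as the paper: the same chain of intermediate functionals separating the Monte Carlo averaging error (controlled by (\ref{moni_mc}) and $L\geq\mathcal{O}(N^2)$), the regression and simulation errors (controlled via Cauchy--Schwarz, the boundedness of $b_u'$, $\sigma_u'$ and Lemma \ref{error_pro}), and the time-discretization error, with the latter handled exactly as in the paper through the Feynman--Kac representation (\ref{kac}), the smoothness of $\hat{\eta}$ from Lemma \ref{fuzhu_pq}, and the weak estimates (\ref{weak_pq}), (\ref{estimate_sde})--(\ref{estimate_sde1}). The only cosmetic difference is that you invoke It\^o's formula for the weak expansion of the $\hat{q}$-term where the paper uses a Taylor expansion of the smooth fields combined with the same weak/strong rates; both rest on the identical ingredients.
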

\begin{proof}
For any $u\in U_{N}$ and $L\geq\mathcal{O}(N^2)$, using the triangle inequality and (\ref{moni_mc}) yields
\begin{equation*}
\begin{aligned}
\|J^{\prime}(u)-J^{\prime}_N(u)\|
&\leq \|J^{\prime}(u)-\mathbb{J}^{\prime}_{N}(u)\|+\|\mathbb{J}^{\prime}_{N}(u)-J^{\prime}_N(u)\|
\leq
\|J^{\prime}(u)-\mathbb{J}^{\prime}_{N}(u)\|+\mathcal{O}(\Delta t).
\end{aligned}
\end{equation*}
We have the following split for $\|J^{\prime}(u)-\mathbb{J}^{\prime}_N(u)\|^2$:
\begin{align*}
&\|J^{\prime}(u)-\mathbb{J}^{\prime}_N(u)\|^2 \nonumber\\
&=\int_{0}^{T}\left(\mathbb{E}\big[\hat{p}_tb^{\prime}_u(t)-\hat{P}_t^{K,L,\pi}\sum\limits_{n=0}^{N-1}b^{\prime}_u(t_n)\chi_{I_{n}^{N}}(t)\big]
+\mathbb{E}\big[\hat{q}_t\sigma^{\prime}_u\left(y_t,u(t)\right)-\hat{Q}_t^{\tilde{K},L,\pi}\sigma^{\prime}_u
\left(y_{t}^{\pi},u(t)\right)\big]\right)^2dt \nonumber\\
&\leq C\int_{0}^{T}\left(\left(\mathbb{E}\big[\hat{p}_tb^{\prime}_u(t)-
\hat{p}_t^{\pi}\sum\limits_{n=0}^{N-1}b^{\prime}_u(t_n)\chi_{I_{n}^{N}}(t)\big]\right)^2 
+\left(\mathbb{E}\big[ \big(\hat{p}_t^{\pi}-\hat{P}_t^{K,\pi}\big)\sum\limits_{n=0}^{N-1}b^{\prime}_u(t_n)\chi_{I_{n}^{N}}(t)\big]\right)^2  \right.\nonumber\\
&\quad\left.+\left(\mathbb{E}\big[ \big(\hat{P}_t^{K,\pi}-\hat{P}_t^{K,L,\pi}\big)\sum\limits_{n=0}^{N-1}b^{\prime}_u(t_n)\chi_{I_{n}^{N}}(t)\big]\right)^2\right)dt \nonumber\\
&\quad +C\int_{0}^{T}\left(\left(\mathbb{E}\big[\hat{q}_t\sigma^{\prime}_u\left(y_t,u(t)\right)-\hat{q}_t^{\pi}
\sigma^{\prime}_u\left(y_{t}^{\pi},u(t)\right)\big]\right)^2
+\left(\mathbb{E}\big[\big(\hat{q}_t^{\pi}-\hat{Q}_t^{\tilde{K},\pi}\big)\sigma^{\prime}_u\left(y_t^{\pi},u(t)\right)\big]\right)^2 \right. \nonumber\\
&\quad\quad\left.+\left(\mathbb{E}\big[\big(\hat{Q}_t^{\tilde{K},\pi}-\hat{Q}_t^{\tilde{K},L,\pi}\big)
\sigma^{\prime}_u\left(y_t^{\pi},u(t)\right)\big]\right)^2\right)dt \nonumber\\
&=C\int_0^T\sum\limits_{j=1}^6T_{j}dt.
\end{align*}
From the estimate (\ref{weak_pq}), we arrive at
\begin{align}
\begin{aligned}\label{first}
\int_0^TT_1dt
&=\sum\limits_{n=0}^{N-1}\int_{t_n}^{t_{n+1}}\left(\mathbb{E}\big[\hat{p}_tb^{\prime}_u(t)-
\hat{p}_nb^{\prime}_u(t_n)\big]\right)^2dt\\
&\leq2\sum\limits_{n=0}^{N-1}\int_{t_n}^{t_{n+1}}\left(\mathbb{E}\big[\hat{p}_{t_n}b^{\prime}_u(t_n)-\hat{p}_tb^{\prime}_u(t)\big]\right)^2dt
+2\Delta t
\sum\limits_{n=0}^{N-1}\left(\mathbb{E}\big[(\hat{p}_{n}-\hat{p}_{t_n})b^{\prime}_u(t_n)\big]\right)^2\\
&\leq2\sum\limits_{n=0}^{N-1}\int_{t_n}^{t_{n+1}}\left(\mathbb{E}\big[\hat{p}_{t_n}b^{\prime}_u(t_n)-\hat{p}_tb^{\prime}_u(t)\big]\right)^2dt+\mathcal{O}\big((\Delta t)^2\big).
\end{aligned}
\end{align}
Utilizing (\ref{kac}), Taylor expansion and (\ref{estimate_sde}) yields
\begin{equation*}
\begin{aligned}
&\int_{t_n}^{t_{n+1}}\left(\mathbb{E}\big[\hat{p}_tb^{\prime}_u(t)-\hat{p}_{t_n}b^{\prime}_u(t_n)\big]\right)^2dt
=\int_{t_n}^{t_{n+1}}\left(\mathbb{E}\big[\hat{\eta}(t,y_t)b^{\prime}_u(t)-\hat{\eta}(t_n,y_{t_n})b^{\prime}_u(t_n) \big] \right)^2dt\\
&=\int_{t_n}^{t_{n+1}}\left(\mathbb{E}\big[
[\hat{\eta}b_{u}^{\prime}]_{t}^{\prime}\left(t_n+\theta(t-t_n),y_{t_n}+\theta(y_t-y_{t_n})\right)(t-t_n) \right.\\
&\quad\left.+[\hat{\eta}b_{u}^{\prime}]_{y}^{\prime}\left(t_n+\theta(t-t_n),y_{t_n}+\theta(y_t-y_{t_n})\right)(y_t-y_{t_n})\big]\right)^2dt\\
&=\mathcal{O}\big((\Delta t)^3\big),
\end{aligned}
\end{equation*}
where $\theta\in(0,1)$. From the above estimate and (\ref{first}) yields
$\int_{0}^{T}T_1dt=\mathcal{O}\big((\Delta t)^2\big).$

According to (\ref{kac}), (\ref{estimate_sde1}), (\ref{weak_pq}) and Taylor expansion, we can bound $\int_{0}^{T}T_4dt$ as follows:
\begin{align*}
&\int_{0}^{T}T_4dt
=\sum_{n=0}^{N-1}\int_{t_n}^{t_{n+1}}\left(\mathbb{E}\big[\hat{q}_t\sigma_u^{\prime}\left(y_t,u(t_n)\right)-\hat{q}_n\sigma_{u}^{\prime}\left(y_{n},u(t_n)\right)\big]\right)^2dt\\
&\leq 2\sum_{n=0}^{N-1}\int_{t_n}^{t_{n+1}}\left(\mathbb{E}\big[\hat{q}_{t_n}\sigma_{u}^{\prime}\left(y_n,u(t_n)\right)-\hat{q}_t\sigma_u^{\prime}\left(y_t,u(t_n)\right) \big]\right)^2dt\\
&\quad+2\Delta t\sum_{n=0}^{N-1}\left(\mathbb{E}\big[(\hat{q}_n-\hat{q}_{t_n})\sigma_{u}^{\prime}\left(y_n,u(t_n)\right)  \big]\right)^2\\
&\leq2\sum_{n=0}^{N-1}\int_{t_n}^{t_{n+1}}\left(\mathbb{E}\left[
\left(\sigma\left(y_{t_n},u(t_n)\right)\hat{\eta}_{y}^{\prime}(t_n,y_{t_n})-\sigma\left(y_t,u(t_n)\right)\hat{\eta}_{y}^{\prime}(t,y_{t})\right)\sigma_{u}^{\prime}\left(y_t,u(t_n)\right)  \right.\right.\\
&\quad\left.\left.+\sigma\left(y_{t_n},u(t_n)\right)\hat{\eta}_{y}^{\prime}(t_n,y_{t_n})\sigma_{u,y}^{\prime\prime}\left(y_t+\theta(y_n-y_t),u(t_n)\right)(y_n-y_t)\right]\right)^2dt
+\mathcal{O}\big((\Delta t)^2\big)\\
&\leq C\sum_{n=0}^{N-1}\int_{t_n}^{t_{n+1}}\left(\mathbb{E}\big[
\sigma\left(y_{t_n},u(t_n)\right)\hat{\eta}_{y}^{\prime}(t_n,y_{t_n})-\sigma\left(y_t,u(t_n)\right)\hat{\eta}_{y}^{\prime}(t,y_{t})\big]\right)^2dt
+\mathcal{O}\big((\Delta t)^2\big).
\end{align*}
Analogously, employing (\ref{estimate_sde}) again yields 
\begin{align*}
&\sum_{n=0}^{N-1}\int_{t_n}^{t_{n+1}}\left(\mathbb{E}\big[
\sigma\left(y_{t_n},u(t_n)\right)\hat{\eta}_{y}^{\prime}(t_n,y_{t_n})-\sigma\left(y_t,u(t_n)\right)\hat{\eta}_{y}^{\prime}(t,y_{t})\big]\right)^2dt\\
&=\sum_{n=0}^{N-1}\int_{t_n}^{t_{n+1}}\left(\mathbb{E}\left[
\left(\hat{\eta}_{y}^{\prime}(t_n,y_{t_n})-\hat{\eta}_{y}^{\prime}(t,y_{t})\right)\sigma\left(y_t,u(t_n)\right) \right.\right.\\
&\quad\left.\left.+\hat{\eta}_{y}^{\prime}(t_n,y_{t_n})\sigma_{y}^{\prime}\left(y_t+\theta(y_{t_n}-y_t),u(t_n)\right)(y_{t_n}-y_t)
\right]\right)^2dt\\
&\leq \mathcal{O}\big((\Delta t)^2\big)+C\sum_{n=0}^{N-1}\int_{t_n}^{t_{n+1}}\left(\mathbb{E}\big[
\hat{\eta}_{y}^{\prime}(t_n,y_{t_n})-\hat{\eta}_{y}^{\prime}(t,y_{t})
\big]\right)^2dt\\
&=\mathcal{O}\big((\Delta t)^2\big)+C\sum_{n=0}^{N-1}\int_{t_n}^{t_{n+1}}\left(\mathbb{E}\big[
\hat{\eta}_{y,t}^{\prime\prime}\left(t+\theta(t_n-t),y_t+\theta(y_{t_n}-y_t)\right)(t_n-t) \right.\\
&\quad\left.+\hat{\eta}_{y,y}^{\prime\prime}\left(t+\theta(t_n-t),y_t+\theta(y_{t_n}-y_t)\right)(y_{t_n}-y_t)
\big]\right)^2dt\\
&=\mathcal{O}\big((\Delta t)^2\big),
\end{align*}
which implies that $\int_{0}^{T}T_4dt=\mathcal{O}\big((\Delta t)^2\big)$.

From Lemma \ref{error_pro} and Jensen inequality, the other terms are bounded as
\begin{align*}
\int_{0}^{T}(T_2+T_5)dt
&\leq C\sum\limits_{n=0}^{N-1}\Delta t\left(\mathbb{E}\big[\big(\hat{p}_{n}-\hat{P}_{n}^{K}(y_n) \big)^2\big] +\mathbb{E}\big[\big(\hat{q}_{n}-\hat{Q}_{n}^{\tilde{K}}(y_n) \big)^2\big]\right)\\
&\leq C\left(\sum\limits_{n=0}^{N-1}\mathbb{E}\big[\big(\hat{\mathcal{P}}_{0,n}\hat{p}_{n}-\hat{p}_{n}\big)^2\big]+\sum\limits_{n=0}^{N-1}\Delta t\mathbb{E}\big[\big(\hat{\mathcal{P}}_{1,n}\hat{q}_{n}-\hat{q}_{n}\big)^2\big]\right)
\end{align*}
and
\begin{align*}
\int_{0}^{T}(T_3+T_6)dt
&\leq C\sum\limits_{n=0}^{N-1}\Delta t\left(\mathbb{E}\big[\big(\hat{P}_{n}^{K}(y_n)-\hat{P}_{n}^{K,L}(y_n) \big)^2\big] +\mathbb{E}\big[\big(\hat{Q}_{n}^{\tilde{K}}(y_n)-\hat{Q}_{n}^{\tilde{K},L}(y_n) \big)^2\big]\right)\\
&\leq C\left(\max_{0\leq n\leq N}\mathbb{E}\big[\big(\hat{P}_{n}^{K}(y_n)-\hat{P}_{n}^{K,L}(y_n)\big)^2\big]+\sum\limits_{n=0}^{N-1}\Delta t
\mathbb{E}\big[\big(\hat{Q}_{n}^{\tilde{K}}(y_n)-\hat{Q}_{n}^{\tilde{K},L}(y_n)\big)^2\big]\right).
\end{align*}
Combining all the estimates above leads to result.
\end{proof}

\begin{Remark}\label{con_jieshi}
The analysis of the last error of Theorem \ref{JN}, i.e., the one induced by the simulation step of LSMC method, is rather involved and technical. \cite{goblt2,goblt3} mainly analyze this error and give a upper bound for generalized backward stochastic differential equation in Theorem 2. Although it is observed through numerical experiments that this upper bound may not be optimal, it is shown that we can achieve the desired order of convergence by setting a sufficiently large number of simulation paths and basis functions. Since the study on the optimal parameters is not the focus of this paper, we can refer to \cite{goblt2,goblt3} for more details. 

In summary, by setting enough simulation path $L$ (at least $\mathcal{O}(N^2)$) as well as basis function, the estimate $\varepsilon_N$ can reach first order convergence, which implies the first order convergence of control and multiplier can be obtained by Theorem \ref{u_uh} and \ref{mu_muh}.
\end{Remark}

\section{Numerical experiments}
Compared to the grid method, LSMC can handle SOCPs with higher dimension, such as some numerical examples with up to 10-dimensional state space are solved in \cite{higher}. Meanwhile, our theoretical framework is fully applicable to the grid method. From Theorem 4.6 in \cite{max3}, it holds that 
 $$\varepsilon_N=\sup_i\|J^{\prime}(u^{i,N})-J_{N}^{\prime}(u^{i,N})\|=\mathcal{O}(\Delta t)$$
with certain regularity assumptions, which implies that the control and multiplier can also reach first order convergence when the conditional expectations are approximated by the grid method.

In the following example, we use Algorithm \ref{lsmc_algorithm_xian} and grid method to solve several SOCPs including deterministic and feedback control problems, to verify our theoretical analysis. For more computational details on the grid method, see \cite{max3,grid2,grid3}.

\begin{Example}\label{exm1}
The following $d$-dimensional  deterministic stochastic optimal control problem is considered:
\begin{eqnarray*}
\begin{aligned}
\min\limits_{(y_t,u(t))\in K\times L^2([0,T];\mathbb{R}^d)} J\left(y_t,u(t)\right)=\frac{1}{2}\int_{0}^{T}\mathbb{E}\left[\sum\limits_{n=1}^{d}\left(y_t^n-\frac{1}{n}y_d\right)^2\right]dt
+\frac{1}{2}\int_{0}^{T}\sum\limits_{n=1}^{d}\left(u^{n}(t)\right)^2dt
\end{aligned}
\end{eqnarray*}
subject to
\begin{equation*}
\left\{\begin{aligned}
dy_t&=u(t)dt+\boldsymbol{\alpha} dW_t,\ t\in (0,T],\\ 
y_0&=0.
\end{aligned}\right.
\end{equation*}
\end{Example}
Here $y_t=[y_t^1,...,y_t^d]^{T}$ and $u(t)=[u^{1}(t),...,u^{d}(t)]^{T}$ are $d$-dimensional state variable and control variable, respectively. $\boldsymbol{\alpha}$ and $W_t$ are the $d\times d$ diagonal matrix whose diagonal element is constant $\alpha$ and $d$-dimensional standard Brownian motion. 
We design this problem to have exact solution $u^*(t)=\left[T^2-t^2,\frac{T^2-t^2}{2},...,\frac{T^2-t^2}{d}\right]^{T}.$
To this end, the deterministic function $y_d$, state constraint parameter $\delta$ and multiplier $\mu^*$ are given by
\begin{equation*}
\begin{aligned}
&y_d=-\frac{1}{3}t^3+(T^2+2)t+\mu,\
\delta=\int_{0}^{T}\mathbb{E}\left[y_t^*\right]dt=\left[\frac{5}{12},\frac{5}{24},...,\frac{5}{12d}\right]^{T},\
\mu^{*}=\left[\mu,\frac{\mu}{2},...,\frac{\mu}{d}\right]^{T},
\end{aligned}
\end{equation*}
where $\mu$ is a chosen non-negative constant. 

We set $d=5$ and randomize $\mu=0.3$. Algorithm \ref{lsmc_algorithm_xian} is used to solved this problem and the basis functions of LSMC method are chosen as Voronoi partition basis function (\cite{goblt1}). The other parameters are set to $T=1,\alpha=0.1,\rho=0.5,\varepsilon_0=5*10^{-4},L=10^4$, $N=8,12,16,20,30,40$.
The exact solution and numerical solution for each dimension are shown in Figure \ref{figure3}. We can observe that our algorithm accurately captures the exact control in each dimension. The constraint for the state in each dimension is also shown in Figure \ref{figure3}, from which it is clear that the numerical states are all within the constraints. Figure \ref{figure3} also gives the convergence results for control and multiplier, which shows that the error decay in each dimension can reach the first order. Numerical results show that our algorithm is accurate in $5$-dimensional stochastic control problem with state constraint.

\begin{figure}[!htbp]
\flushleft
\label{3a}
\includegraphics[width=4cm,height=3.5cm]{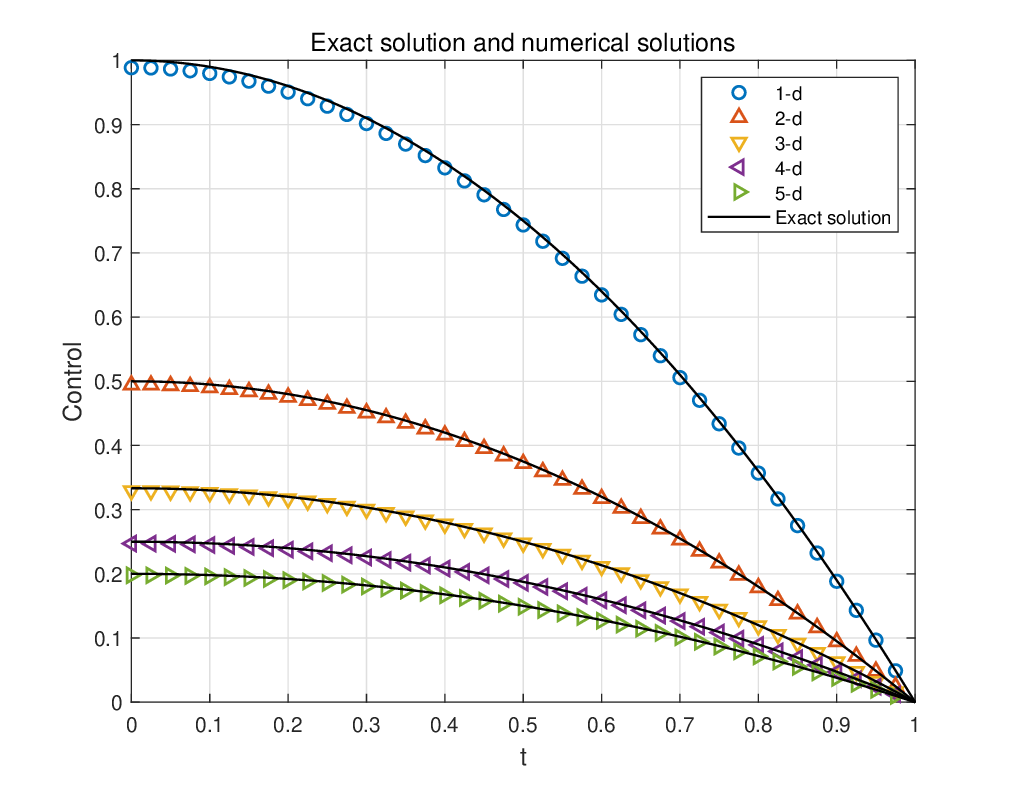}
\hspace{-5mm}
\label{3b}
\includegraphics[width=4cm,height=3.5cm]{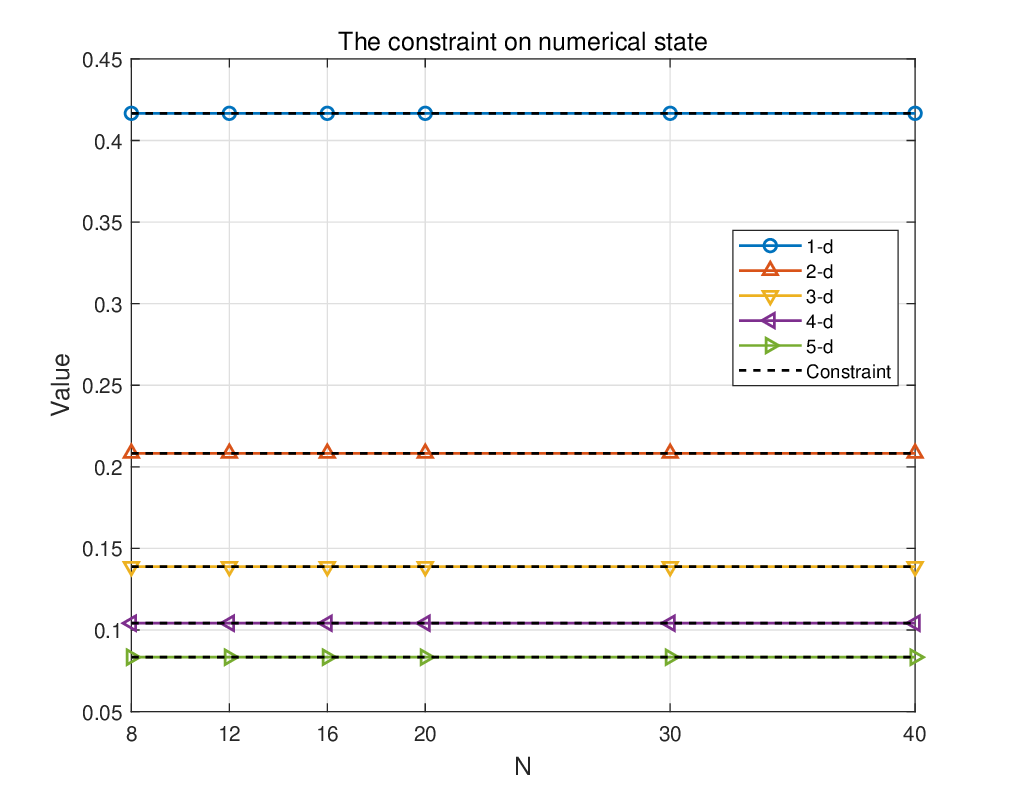}
\hspace{-5mm}
\label{3c}
\includegraphics[width=4cm,height=3.5cm]{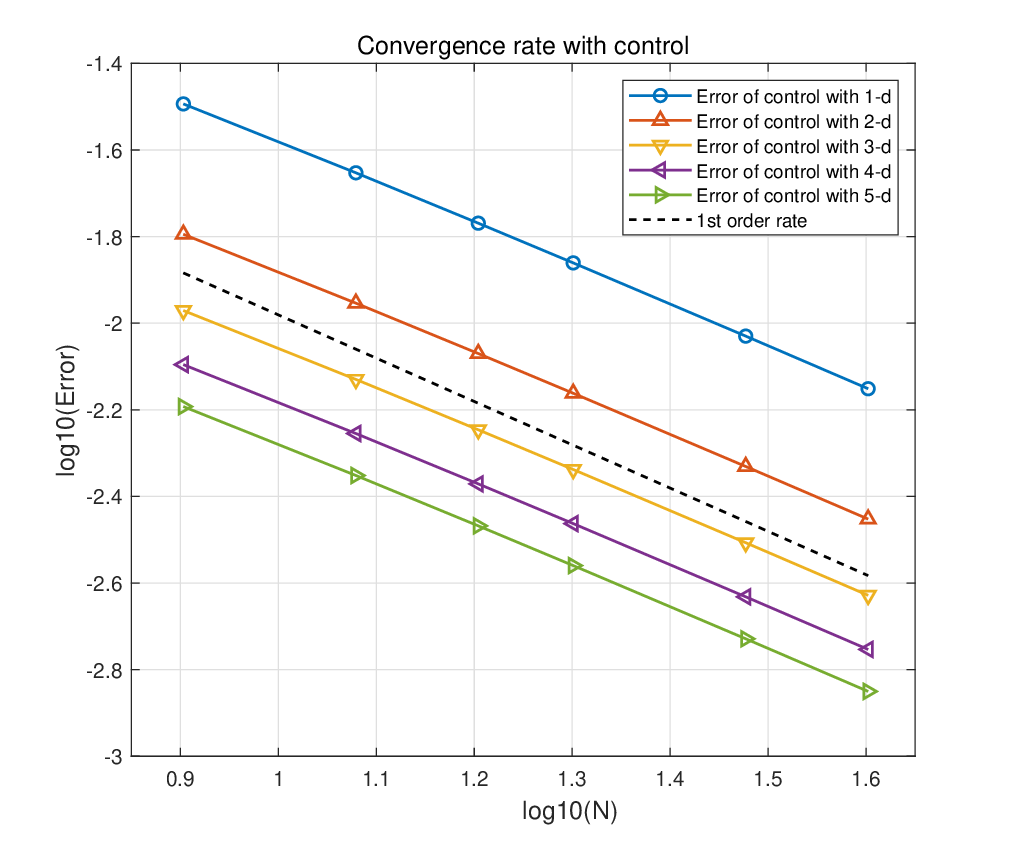}
\hspace{-5mm}
\label{3d}
\includegraphics[width=4cm,height=3.5cm]{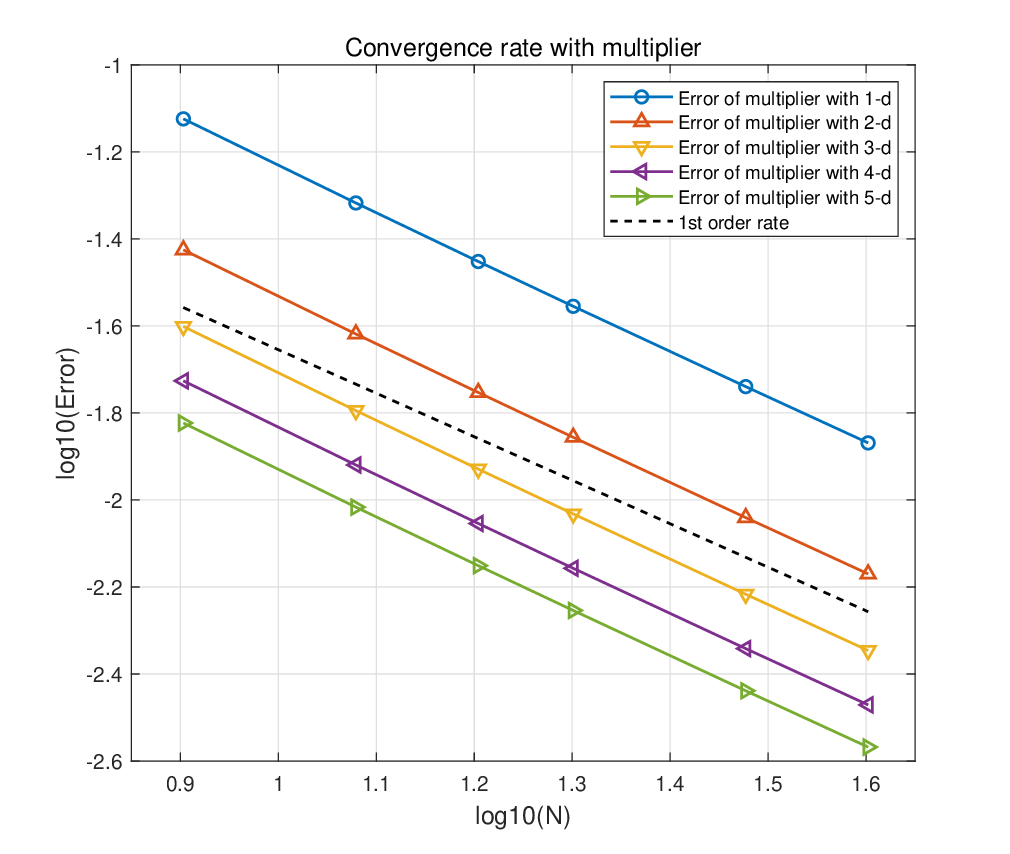}
\hspace{-5mm}
\caption{Left: The exact and numerical solutions with $N=40$. Center-left: Constraint testing of the numerical state. Center-right: The convergence rates of the control. Right: The convergence rates of multiplier for Example \ref{exm1}.}
\label{figure3}
\end{figure}

\begin{Example}(\cite{max3})\label{exm2}
The second stochastic optimal control problem with objective functional given in Example \ref{exm1} and $m=n=d=1$ is considered. The state equation is as follows:
\begin{equation*}
\left\{\begin{aligned}
dy_t&=\left(u(t)-r(t)\right)dt+\alpha u(t)dW_t,\ t\in (0,T],\\
y_0&=0.
\end{aligned}\right.
\end{equation*}
\end{Example}

We set $T=1,\alpha=0.1$ and the deterministic function $y_d,r(t)$ as well as optimal control $u^{*}$ are given as follows:
\begin{equation*}
\begin{aligned}
y_d=\frac{t}{2\alpha^2}-\frac{1}{2\alpha^4}\ln\frac{\alpha^2T+1}{\alpha^2(T-t)+1}+1+\mu^*,\quad u^*=\frac{T-t}{\alpha^2(T-t)+1},\quad r(t)=\frac{u^{*}}{2},
\end{aligned}
\end{equation*}
where $\mu^*=0.2$ is the chosen multiplier, and $\delta=\mathbb{E}[\int_0^Ty_t^*dt]\approx 0.16543$.

Algorithms \ref{lsmc_algorithm_xian} and grid method are used to solve this problem.
In the grid type method, we set $\rho=0.1$ and the tolerance error as well as numbers of Monte Carlo path are set $\varepsilon_0=5*10^{-5}$ and $L=10^{5}$. The number $N$ of time steps is taken as $N=30,40,...,80$, successively. The numerical results containing control error, multiplier error, convergence order, integral of the numerical state variable and computation time are reported in Table \ref{table1}.
In the LSMC method, two types of basis functions: Hypercubes (HC) and Voronoi partition (VP) (\cite{goblt1}) basis functions, are considered and we set $\rho=0.1$, $\varepsilon_0=10^{-4}$, $L=2000$, $N=8,12,16,20,30,40$. The numerical results are given in Table \ref{table2}.
It can be observed that the numerical states obtained by both methods are within the constraint set and the errors of the control and multiplier reach first-order convergence, which is consistent with our theoretical analysis. Meanwhile, it is clear that Algorithms \ref{lsmc_algorithm_xian} and grid method are computationally efficient while maintaining accuracy.

\begin{table}[!ht]
\footnotesize
\caption{Control error, multiplier error, convergence order, integral of the numerical state and computation time solved by grid method for Example \ref{exm2}.}
\begin{center}
\begin{tabular}
{|c| c| c| c| c| c| c| c|}
\hline\label{table1}

Method                &N         &Control error  &Rate   &Multiplier error   &Rate   &Integral     &Time        \\
\hline
\multirow{6}{*}{Grid method}   &$30$      &$7.04343\mathrm{e}-3$   &$\backslash$   &$1.23094\mathrm{e}-2$ &$\backslash$   &$0.16543$    &$3.26$s      \\
\cline{2-8}
                      &$40$      &$5.29748\mathrm{e}-3$  &$0.99$            &$9.24659\mathrm{e}-3$       &$0.99$       &$0.16543$    &$4.15$s      \\
\cline{2-8}
                       &$50$      &$4.24554\mathrm{e}-3$  &$0.99$            &$7.42457\mathrm{e}-3$       &$0.98$     &$0.16543$   &$5.26$s     \\
\cline{2-8}
                      &$60$      &$3.54257\mathrm{e}-3$  &$0.99$             &$6.21277\mathrm{e}-3$       &$0.98$      &$0.16543$   &$6.11$s        \\
\cline{2-8}
                      &$70$      &$3.03915\mathrm{e}-3$  &$0.99$             &$5.36264\mathrm{e}-3$       &$0.95$     &$0.16543$   &$7.50$s       \\
\cline{2-8}
                      &$80$      &$2.66152\mathrm{e}-3$  &$0.99$             &$4.67150\mathrm{e}-3$       &$1.03$     &$0.16543$  &$9.80$s    \\
\hline
\end{tabular}
\end{center}
\end{table}

\begin{table}[!ht]
\footnotesize
\caption{Control error, multiplier error, convergence order, integral of the numerical state and computation time solved by Algorithm \ref{lsmc_algorithm_xian} for Example \ref{exm2}.}
\begin{center}
\begin{tabular}
{|c| c| c| c| c| c| c| c|}
\hline\label{table2}

Basis functions            &N      &Control error  &Rate   &Multiplier error   &Rate  &Integral  &Time  \\
\hline
\multirow{6}{*}{HC}    &$8$      &$2.57203\mathrm{e}-2$   &$\backslash$   &$4.49237\mathrm{e}-2$        &$\backslash$   &$0.16543$   &$0.16$s         \\
\cline{2-8}
                      &$12$      &$1.73583\mathrm{e}-2$   &$0.97$            &$3.10487\mathrm{e}-2$       &$0.91$        &$0.16543$   &$0.54$s          \\
\cline{2-8}
                       &$16$      &$1.30960\mathrm{e}-2$  &$0.98$           &$2.38207\mathrm{e}-2$       &$0.92$         &$0.16543$   &$1.18$s      \\
\cline{2-8}
                      &$20$      &$1.05151\mathrm{e}-2$   &$0.98$            &$1.88635\mathrm{e}-2$       &$1.05$        &$0.16543$   &$2.26$s      \\
\cline{2-8}
                      &$30$      &$7.04469\mathrm{e}-3$   &$0.99$            &$1.27467\mathrm{e}-2$       &$0.97$        &$0.16543$   &$5.48$s        \\
\cline{2-8}
                      &$40$      &$5.30063\mathrm{e}-3$   &$0.99$            &$9.27519\mathrm{e}-3$       &$1.11$       &$0.16543$   &$15.39$s    \\
\hline
\multirow{6}{*}{VP}    &$8$      &$2.56788\mathrm{e}-2$   &$\backslash$    &$4.24494\mathrm{e}-2$        &$\backslash$    &$0.16543$   &$0.51$s         \\
\cline{2-8}
                      &$12$      &$1.73326\mathrm{e}-2$   &$0.97$            &$2.90962\mathrm{e}-2$       &$0.93$      &$0.16543$   &$1.35$s         \\
\cline{2-8}
                       &$16$      &$1.30837\mathrm{e}-2$  &$0.98$           &$2.24400\mathrm{e}-2$       &$0.90$        &$0.16543$   &$2.69$s         \\
\cline{2-8}
                      &$20$      &$1.05093\mathrm{e}-2$   &$0.98$            &$1.82058\mathrm{e}-2$       &$0.94$       &$0.16543$   &$4.53$s          \\
\cline{2-8}
                      &$30$      &$7.04458\mathrm{e}-3$   &$0.99$            &$1.20292\mathrm{e}-2$       &$1.02$       &$0.16543$   &$8.49$s        \\
\cline{2-8}
                      &$40$      &$5.29884\mathrm{e}-3$   &$0.99$            &$9.24822\mathrm{e}-3$       &$0.91$       &$0.16543$   &$18.02$s          \\
\hline
\end{tabular}
\end{center}
\end{table}

\begin{Example}\label{exm3}
The following stochastic optimal control problem with no explicit exact solution is considered:
\begin{eqnarray}\label{example3_object}
\begin{aligned}
\min\limits_{(y_t,u(t))\in K\times L^2([0,T];\mathbb{R})} J\left(y_t,u(t)\right)=\frac{1}{2}\int_{0}^{T}\mathbb{E}\left[\left(y_t-1-\mu^{*}\right)^2\right]dt+\frac{1}{2}\int_{0}^{T}\left(u(t)\right)^2dt
\end{aligned}
\end{eqnarray}
subject to
\begin{equation}\label{example3_state}
\left\{\begin{aligned}
dy_t&=\left(u(t)+y_t\right)dt+\alpha\sqrt{1+y_t^2}dW_t,\ t\in (0,T],\\ 
y_0&=1.
\end{aligned}\right.
\end{equation}
\end{Example}
We set $T=1$ and $\alpha=0.1$. Because an explicit solution cannot be written for this problem, we aim to construct a set of reference solution. The following unconstrained problem is first solved by using the grid type method with the parameter settings of $N=600$ and $L=5*10^{5}$:
\begin{eqnarray*}
\begin{aligned}
\min\limits J\left(y_t,u(t)\right)=\frac{1}{2}\int_{0}^{T}\mathbb{E}\left[\left(y_t-1\right)^2\right]dt+\frac{1}{2}\int_{0}^{T}
\left(u(t)\right)^2dt
\end{aligned}
\end{eqnarray*}
subject to state equation (\ref{example3_state}).
The solutions $\tilde{u}_h,\tilde{y}_h$ and $(\tilde{p}_h,\tilde{q}_h)$ for unconstrained problem are obtained. Then we set the constraint parameter $\delta=\mathbb{E}\left[\int_0^T\tilde{y}_h dt\right]\approx1.34150$, which implies that the solutions of unconstrained problem are also the solutions of SOCP (\ref{example3_object}) with the state constraint parameter $\delta=1.34150$. 

The multiplier $\mu^{*}$ is randomly chosen to be 1. In the grid type method, we set $\rho=0.1,\varepsilon_0=5*10^{-5},L=10^{5}$ and $N=20,25,30,40,50,60$. In the LSMC method, we set $\rho=0.1,\varepsilon_0=10^{-4},L=2000$ and $N=8,12,16,20,30,40$. 
The numerical results are given in Table \ref{table3} and \ref{table4}. The similar results are obtained as for the previous examples. 

To test the effect of the constraint parameter $\delta$, we set $\rho=1/i$, $\varepsilon_0=10^{-5}$, $\delta=1,0.5$ and keep the other parameters constant. The integral values of the numerical states solved by the two methods are given in Table \ref{table5}, from which it can be observed that the state variables obtained by both methods are in the constraint set, which verifies the effectiveness of our algorithm.

\begin{table}[!ht]
\caption{Control error, multiplier error, convergence order, integral of the numerical state and computation time solved by grid method for Example \ref{exm3}.}
\begin{center}
\begin{tabular}
{|c| c| c| c| c| c| c| c|}
\hline\label{table3}

Method         &N         &Control error  &Rate   &Multiplier error   &Rate  &Integral   &Time        \\
\hline
\multirow{6}{*}{Grid method} &$20$  &$2.65724e-2$   &$\backslash$ &$1.47905e-2$ &$\backslash$  &$1.34150$   &$4.17$s      \\
\cline{2-8}
                      &$25$      &$2.11674\mathrm{e}-2$  &$1.02$            &$1.20271\mathrm{e}-2$       &$0.93$       &$1.34150$   &$4.30$s      \\
\cline{2-8}
                       &$30$      &$1.75304\mathrm{e}-2$  &$1.03$            &$1.01448\mathrm{e}-2$       &$0.93$     &$1.34150$   &$4.89$s     \\
\cline{2-8}
                      &$40$      &$1.29447\mathrm{e}-2$  &$1.05$             &$7.40610\mathrm{e}-3$       &$1.09$      &$1.34150$   &$6.57$s        \\
\cline{2-8}
                      &$50$      &$1.01969\mathrm{e}-2$  &$1.07$             &$5.94427\mathrm{e}-3$       &$0.99$     &$1.34150$   &$8.33$s      \\
\cline{2-8}
                      &$60$      &$8.36104\mathrm{e}-3$  &$1.09$             &$4.94284\mathrm{e}-3$       &$1.01$     &$1.34150$   &$11.00$s    \\
\hline
\end{tabular}
\end{center}
\end{table}

\begin{table}[!ht]
\caption{Control error, multiplier error, convergence order, integral of the numerical state and computation time solved by Algorithm \ref{lsmc_algorithm_xian} for Example \ref{exm3}.}
\begin{center}
\begin{tabular}
{|c| c| c| c| c| c| c| c|}
\hline\label{table4}

Basis functions       &N      &Control error  &Rate   &Multiplier error   &Rate &Integral   &Time  \\
\hline
\multirow{6}{*}{HC}    &$8$      &$6.64573\mathrm{e}-2$   &$\backslash$   &$5.19389\mathrm{e}-2$        &$\backslash$   &$1.34150$   &$0.39$s         \\
\cline{2-8}
                      &$12$      &$4.44963\mathrm{e}-2$   &$0.99$           &$3.52938\mathrm{e}-2$       &$0.95$        &$1.34150$   &$1.46$s          \\
\cline{2-8}
                       &$16$      &$3.33019\mathrm{e}-2$  &$1.01$           &$2.65026\mathrm{e}-2$       &$1.00$         &$1.34150$   &$4.28$s      \\
\cline{2-8}
                      &$20$      &$2.65966\mathrm{e}-2$   &$1.01$            &$2.14768\mathrm{e}-2$       &$0.94$        &$1.34149$   &$8.94$s      \\
\cline{2-8}
                      &$30$      &$1.76264\mathrm{e}-2$   &$1.01$            &$1.43987\mathrm{e}-2$       &$0.99$        &$1.34150$   &$23.08$s        \\
\cline{2-8}
                      &$40$      &$1.29077\mathrm{e}-2$   &$1.08$            &$1.10728\mathrm{e}-2$       &$0.91$        &$1.34149$   &$64.81$s    \\
\hline
\multirow{6}{*}{VP}    &$8$      &$6.64066\mathrm{e}-2$   &$\backslash$   &$5.14559\mathrm{e}-2$        &$\backslash$   &$1.34150$   &$0.48$s         \\
\cline{2-8}
                      &$12$      &$4.46754\mathrm{e}-2$   &$0.98$           &$3.56011\mathrm{e}-2$       &$0.91$        &$1.34150$   &$1.47$s          \\
\cline{2-8}
                       &$16$      &$3.30933\mathrm{e}-2$  &$1.04$           &$2.75531\mathrm{e}-2$       &$0.89$         &$1.34150$   &$3.13$s      \\
\cline{2-8}
                      &$20$      &$2.66044\mathrm{e}-2$   &$0.98$            &$2.22703\mathrm{e}-2$       &$0.95$        &$1.34150$   &$5.51$s      \\
\cline{2-8}
                      &$30$      &$1.76770\mathrm{e}-2$   &$1.01$            &$1.49390\mathrm{e}-2$       &$0.98$        &$1.34150$   &$10.38$s        \\
\cline{2-8}
                      &$40$      &$1.33057\mathrm{e}-2$   &$0.99$            &$1.13490\mathrm{e}-2$       &$0.96$        &$1.34150$   &$20.69$s    \\
\hline
\end{tabular}
\end{center}
\end{table}

\begin{table}[!ht]
\caption{The integral values of numerical states solved by grid and LSMC methods with $\delta=1$ and $\delta=0.5$ for Example \ref{exm3}.}
\begin{center}
\begin{tabular}
{|c| c| c| c| c| c| c|}
\hline\label{table5}

N                          &20     &25  &30   &40   &50  &60        \\
\hline
{Grid method($\delta=1$)} &$1.00000$  &$1.00000$   &$1.00000$ &$1.00000$ &$1.00000$  &$1.00000$ \\ 
\hline
{Grid method($\delta=0.5$)}&$0.50000$  &$0.50000$   &$0.50000$ &$0.50000$ &$0.50000$  &$0.50000$ \\ 
\hline
{N}                          &8     &12  &16   &20   &30  &40        \\
\hline
{HC($\delta=1$)}&$1.00000$  &$1.00000$   &$1.00000$ &$1.00000$ &$1.00000$  &$1.00000$ \\ 
\hline
{HC($\delta=0.5$)}&$0.50000$  &$0.499999$   &$0.50000$ &$0.50000$ &$0.50000$  &$0.50000$ \\ 
\hline
{VP($\delta=1$)}&$1.00000$  &$1.00000$   &$1.00000$ &$1.00000$ &$1.00000$  &$1.00000$ \\ 
\hline
{VP($\delta=0.5$)}&$0.50000$  &$0.50000$   &$0.50000$ &$0.50000$ &$0.50000$  &$0.50000$ \\ 
\hline
\end{tabular}
\end{center}
\end{table}


\begin{Example}(\cite{exm4})\label{exm4}
We consider the following linear quadratic feedback stochastic optimal control problem with expected  integral state constraint:
\begin{eqnarray*}
\begin{aligned}
\min\limits_{(y_t,u_t)\in K\times L_{\mathcal{F}}^2([0,T]\times\Omega;\mathbb{R})} J(y_t,u_t)=\frac{1}{2}\int_{0}^{T}\mathbb{E}\left[\left(y_t-\mu^{*}\right)^2\right]dt
\end{aligned}
\end{eqnarray*}
subject to
\begin{equation*}
\left\{\begin{aligned}
dy_t&=u_tdt+\alpha u_tdW_t,\ t\in (0,T],\\ 
y_0&=y^0,
\end{aligned}\right.
\end{equation*}
where $\mu^*$ is the multiplier.
\end{Example}

The optimal control, state and the corresponding optimal cost functional are given by
\begin{equation*}
\begin{aligned}
&u_t^*=-\frac{y_t^*}{\alpha^2},\
y_t^*=y^0e^{-\frac{t}{\alpha^2}-\frac{t}{2\alpha^2}-\frac{W_t}{\alpha}},\\
&J(y_t^*,u_t^*)=\frac{1}{2}\left((\mu^*)^2T+\left((y^0)^2\alpha^2-2\mu^* y^0\alpha^2\right)(1-e^{-\frac{T}{\alpha^2}})\right).
\end{aligned}
\end{equation*}
The grid type method is used to solved this problem. We set $T=1,y^0=1,\alpha=2,\mu^{*}=0.2,\rho=1/i$ and the state constraint parameter $\delta=\mathbb{E}\left[\int_0^Ty_t^*dt\right]=y^0\alpha^2(1-e^{-\frac{T}{\alpha^2}})\approx0.88480$. The tolerance error, Monte Carlo simulation path and grid node are set $\varepsilon_0=10^{-4},L=10^{5}$ and $N=10,20,30,40,50,60$, respectively. The errors of the multiplier and cost functional as well as the corresponding rates, computation time and integral of the numerical state are given in Table \ref{table6}. It is clearly shown that our algorithm is certainly effective and accurate for feedback stochastic optimal control and admits a first order rate of convergence.

\begin{table}[!ht]
\caption{Cost functional (CF) error, multiplier error, convergence order, integral of the numerical state and computation time solved by grid method for Example \ref{exm4}.}
\begin{center}
\begin{tabular}
{|c| c| c| c| c| c| c| c|}
\hline\label{table6}

Method                &N       &CF error  &Rate   &Multiplier error  &Rate  &Integral   &Time        \\
\hline
\multirow{6}{*}{Grid method} &$10$  &$1.12782\mathrm{e}-2$   &$\backslash$ &$6.19219\mathrm{e}-2$ &$\backslash$  &$0.88480$   &$4.32$s      \\
\cline{2-8}
                      &$20$      &$5.35812\mathrm{e}-3$  &$1.07$            &$3.50593\mathrm{e}-2$       &$0.82$      &$0.88480$   &$9.06$s      \\
\cline{2-8}
                       &$30$     &$3.46322\mathrm{e}-3$  &$1.08$            &$2.39449\mathrm{e}-2$       &$0.94$     &$0.88480$   &$12.01$s     \\
\cline{2-8}
                      &$40$      &$2.52560\mathrm{e}-3$  &$1.10$            &$1.81255\mathrm{e}-2$       &$0.97$     &$0.88480$   &$17.92$s       \\
\cline{2-8}
                      &$50$      &$1.98717\mathrm{e}-3$  &$1.07$            &$1.46049\mathrm{e}-2$       &$0.97$     &$0.88480$   &$26.37$s       \\
\cline{2-8}
                      &$60$      &$1.62600\mathrm{e}-3$  &$1.10$            &$1.22774\mathrm{e}-2$       &$0.95$     &$0.88480$   &$39.05$s    \\
\hline
\end{tabular}
\end{center}
\end{table}

\section{Conclusion}
In this work, a gradient projection method for solving stochastic optimal control problem with integral state constraint is proposed. First order optimality condition is derived by introducing Lagrange functional and a iterative scheme is given in which the specific multiplier is selected at each step to ensure that the state constraint holds. The fully discrete iterative scheme is obtained by piecewise constant approximation of the control variable and LSMC approximation of the conditional expectation in  FBSDEs. The first-order convergence of the control and multiplier can be achieved under certain regularity assumptions and the effectiveness of the algorithm and the accuracy of the theoretical findings are verified by numerical experiments. In our future work we will investigate the state-constrained optimal control problem governed by stochastic parabolic equation.

\section*{Data availibility}
The data that support the findings of this study are available from the authors upon reasonable
request.

\section*{Conflict of interest}
The authors declare that they have no conflict of interest.

\end{document}